\newtheorem{theorem}{Theorem}[section]
\newtheorem{corollary}[theorem] {Corollary}
\newtheorem{definition}[theorem]{Definition}
\newtheorem{example}[theorem]{Example}
\newtheorem{lemma}[theorem]{Lemma}
\newtheorem{proposition}[theorem]{Proposition}
\newtheorem{remark}[theorem]{Remark}
\newcommand{\TC}{\mathrm{TC}}
\newcommand{\ct}{\mathrm{cat}}
\newcommand{\sct}{\mathrm{secat}}
\newcolumntype{x}[1]{>{\centering\arraybackslash}p{#1}}
\begin{document}
\title[]{Parametrized homotopic distance}
\author[N. Daundkar]{Navnath Daundkar}
\address{Department of Mathematics, Indian Institute of Science Education and Research Pune, India.}
\email{navnath.daundkar@acads.iiserpune.ac.in}
\author[]{J.M. Garc\'ia-Calcines}
\address{Departamento de Matem\'aticas, Estad\'istica e Investigaci\'on Operativa, Universidad de La Laguna, Avenida Astrof\'isico Francisco S\'anchez S/N, 38200 La Laguna, Spain.}
\email{jmgarcal@ull.edu.es}

\thanks{}

\begin{abstract} 
We introduce the concept of parametrized homotopic distance, extending the classical notion of homotopic distance to the fibrewise setting. We establish its correspondence with the fibrewise sectional category of a specific fibrewise fibration and derive cohomological lower bounds and connectivity upper bounds under mild conditions. We also analyze the behavior of parametrized homotopic distance under compositions and products of fibrewise maps, along with its interaction with the triangle inequality. 

We establish several sufficient conditions for fibrewise $H$-spaces to admit a fibrewise division map and prove that their parametrized topological complexity equals their fibrewise unpointed LS category, extending Lupton and Scherer's theorem to the fibrewise setting. Additionally, we give sharp estimates for the parametrized topological complexity of a class fibrewise $H$-spaces which arises as sphere bundles with fibre  $S^7$.  Furthermore, we estimate the parametrized homotopic distance of fibre-preserving, fibrewise maps between fibrewise fibrations, in terms of the parametrized homotopic distance of the induced fibrewise maps between individual fibres, as well as the fibrewise unpointed Lusternik–Schnirelman category of the base space. 

Finally, we define and study a pointed version of parametrized homotopic distance, establishing cohomological bounds and identifying key conditions for its equivalence with the unpointed version, thus providing a finer classification of fibrewise homotopy invariants.
\end{abstract}

\keywords{Homotopic distance, parametrized homotopic distance, fibrewise sectional category, fibrewise unpointed Lusternik–Schnirelman category, parametrized topological complexity, fibrewise $H$-spaces}
\subjclass[2020]{55M30, 55S40, 55R70, 55U35, 55P45}
\maketitle

\section{Introduction}
For a topological space $X$, the study of classical homotopy invariants, such as the Lusternik–Schnirelman category $\ct(X)$, introduced by Lusternik and Schnirelman \cite{L-S-cat}, and the topological complexity $\TC(X)$, introduced by Farber \cite{F}, has led to significant insights into the connections between algebraic topology and applications in fields like critical point theory and robot motion planning. These invariants exhibit structural similarities. This has motivated further exploration of their relationships and possible generalizations.

In \cite{macias2022homotopic}, Macías-Virgós and Mosquera-Lois introduced the concept of homotopic distance between continuous maps, unifying and extending both $\ct(X)$ and $\TC(X)$ within a common framework. 
The homotopic distance $D(f,g)$ between two continuous maps $f,g:X\to Y$ is the smallest integer $n\geq 0$ (or infinity if no such $n$ exists) for which $X$ admits a cover by $n+1$ open sets, on each of which $f$ and $g$ are homotopic. For instance, under the assumption of path-connectedness, $\ct(X)=D(id_X,c)$, where $id_X$ is the identity map and $c$ is a constant map defined on $X$. Furthermore, for the projections $pr_1,pr_2:X\times X\to X$, we have $\TC(X)=D(pr_1,pr_2)$ (see \cite[Proposition 2.6]{macias2022homotopic}). 
This framework not only simplifies proofs of key properties of $\ct(X)$ and $\TC(X)$, but also leads to new results, including refined inequalities that serve as lower bounds for these invariants. Specifically, Macías-Virgós and Mosquera-Lois studied the behavior of homotopic distance under compositions and products, deriving the well-known product inequalities for both the Lusternik–Schnirelman category and topological complexity. They also explored the homotopic distance of maps defined on $H$-spaces, recovering the result of Lupton and Scherer \cite{L-S}, which asserts that the topological complexity of $H$-spaces coincides with their Lusternik–Schnirelman category. Moreover, they established a cohomological lower bound for the homotopic distance and examined the homotopic distance of fiber-preserving maps between fibrations.

In many real-world scenarios, the complexity of motion planning is influenced by additional parameters, motivating the study of parametrized settings. To incorporate such dependencies, Cohen, Farber, and Weinberger developed in \cite{C-F-W} a natural topological framework for parameterized settings. These scenarios are modeled by a fibration $p:E\to B$, where $E$ is viewed as a union of fibers $X_b=p^{-1}(b)$, indexed by points $b\in B$. Selecting a point $b\in B$ specifies an external condition of the system. The parameterized topological complexity $\TC[p:E\to B]$ quantifies the complexity of universal motion planning algorithms in this context. Foundational results for $\TC[p:E\to B]$ provide lower and upper bounds based on the topology of $E$, $B$ and the fibers $X_b$.

García-Calcines introduced a more general framework in \cite{Calcines-fibrewiseTC} for a fibrewise space $X$ over $B$, which need not necessarily be a fibration. In this context, the fibrewise topological complexity $\TC_B(X)$ coincides with the version introduced by Cohen, Farber, and Weinberger when $p$ is a fibration. García-Calcines also demonstrated in \cite[Proposition 11]{Calcines-fibrewiseTC} that fibrewise topological complexity is a fibrewise homotopy invariant, and introduced the pointed version of parameterized topological complexity, along with sufficient conditions under which the two notions coincide (see \cite[Proposition 29]{Calcines-fibrewiseTC}).

This paper introduces the \textit{parametrized homotopic distance}, a novel extension of homotopic distance to the fibrewise setting, providing a unified framework for various fibrewise homotopy invariants.  Specifically, the parametrized (or fibrewise) homotopic distance $D_B(f,g)$ between fibrewise maps $f,g:X\to Y$, where $X, Y$ are fibrewise spaces over $B$, is defined as the smallest integer $n\geq 0$ (or infinity) such that $X$ can be covered by $n+1$ open sets where $f$ and $g$ are fibrewise homotopic on each set. The parametrized homotopic distance provides a unifying framework that generalizes previous notions, including the fibrewise unpointed Lusternik–Schnirelman category and parametrized topological complexity. More specifically, if $X$ is a fibrewise space over $B$ and $pr_1,pr_2:X\times_B X\to X$ are the corresponding projections, then we have $\TC_B(X)=D_B(pr_1,pr_2)$ (see \Cref{cor:parametrizedTC-as-phd}). Moreover, if $X$ is a fibrewise pointed space over $B$, then $\ct_B^*(X)=D_B(id_X,s_X\circ p_X)$.
By relating parametrized homotopic distance to the fibrewise sectional category in the sense of \cite{GC}, of specific fibrations, we derive cohomological lower bounds and connectivity upper bounds. Additionally, we explore its behaviour under compositions, products, and fibre-preserving maps. These results not only deepen our understanding of fibrewise spaces but also provide new algebraic and topological tools with direct applications in motion planning and the study of fibrewise $H$-spaces.

The paper is organized as follows. In \Cref{sec:prelim-fib-homotopy-theory}, we review basic concepts of fibrewise homotopy theory. \Cref{sec:phd} introduces the parametrized homotopic distance and establishes its connection with the fibrewise sectional category, leading to cohomological and dimension-connectivity bounds. \Cref{sec:phd-properties} explores its fundamental properties, including invariance, composition rules, and triangle inequalities. \Cref{sec: fibrewise-H-spaces} focuses on fibrewise $H$-spaces, proving that those admitting a division map satisfy $\TC_B(X) = \ct_B^*(X)$ and analyzing further implications. \Cref{sec: fibrewise fibrations} examines parametrized homotopic distance in the context of fibrewise fibrations, relating it to the LS category of the base space. \Cref{sec:pphd} introduces and studies the pointed version of the concept, providing cohomological bounds and conditions under which it coincides with the unpointed version. Finally, \Cref{sec:pphd-vs-phd} compares both versions, establishing criteria for their equivalence.

\section{Fibrewise homotopy theory}\label{sec:prelim-fib-homotopy-theory}

We begin by reviewing key notations and results from fibrewise homotopy theory. For further details, the reader is referred to \cite{C-J} and \cite{GC}.

Let $B$ be a fixed topological space. A \emph{fibrewise space} over $B$ is defined as a pair $(X, p_X)$, where $X$ is a topological space, and $p_X:X\rightarrow B$ is a map, commonly referred to as the projection of the fibrewise space. When there is no risk of confusion, the pair $(X, p_X)$ will simply be denoted by $X$ and referred to as a fibrewise space. Given two fibrewise spaces $X$ and $Y$, a fibrewise map (over $B$) from $X$ to $Y$ is a map $f: X \rightarrow Y$ that satisfies $p_Y \circ f = p_X$.

We denote the category of fibrewise spaces and fibrewise maps over $B$ by $\mathbf{Top}_B$. In this category, the space $B$ with the identity map serves as the final object, while the initial object is the empty set $\emptyset$, with a unique projection map to $B$. If $X$ and $Y$ are two fibrewise spaces, their fibrewise product is given by $$X\times _BY=\{(x,y)\in
X\times Y:p_X(x)=p_Y(y)\}$$ which is the pullback of the maps $p_X$ and $p_Y$. This construction provides the categorical product of $X$ and $Y$ in $\mathbf{Top}_B$.

Let $I$ represent the closed unit interval $[0,1]$ with the standard topology inherited from $\mathbb{R}$. The \emph{fibrewise cylinder} of a fibrewise space $X$ is the product space $X \times I$, with the projection given by the composition ${X \times I} \stackrel{pr_1}{\longrightarrow} X \stackrel{p_X}{\longrightarrow} B$. We refer to the fibrewise cylinder of $X$ as $I_B(X)$. The concepts of \emph{fibrewise homotopy} $\simeq _B$ between fibrewise maps and \emph{fibrewise homotopy equivalence} follow naturally from this construction.

If $X$ is a fibrewise space consider the pullback in
the category $\mathbf{Top}$ of topological spaces and maps:
\begin{equation}\label{eq:PBX}
 \xymatrix{
{P_B(X)} \ar[r] \ar[d] & {X^I} \ar[d]^{p_X^I} \\
{B} \ar[r]_c & {B^I.} }   
\end{equation}

Here $X^I$ (and $B^I$) denotes the free path-space provided with the compact-open topology and $p_X^I$ is the obvious map induced
by precomposing with $p_X$. Besides $c:B\rightarrow B^I$ is the map that carries any $b\in B$ to the
constant path $c_b$ in $B^I$. Thus, $P_B(X)$ has the expression
$$P_B(X)=B\times _{B^I}X^I=\{(b,\alpha )\in B\times X^I:c_b=p_X\circ \alpha \}$$
\noindent with projection $P_B(X)\rightarrow B$, $(b,\alpha )\mapsto b,$ the base change of $p_X^I$ in this pullback. The space $P_B(X)$ is called the \emph{fibrewise cocylinder} of $X$, or equivalently, the
\emph{fibrewise free path space} of $X$.

\begin{remark}\label{prim}
Observe that $P_B(X)$ can also be described as the space of all paths $\alpha :I\rightarrow X$ such
that the path $p_X\circ \alpha $ is constant, i.e., paths lying in a single fibre of $X.$
This description is provided in \cite{C-F-W}, where the notation used is $X_B^I$ for the fibrewise cocylinder.
Also note that $P_B(X)$ is fibrewise homotopy equivalent to $X$. Indeed, the fibrewise map
$\gamma _X:X\rightarrow P_B(X),\hspace{8pt}x\mapsto (p_X(x),c_x)$,
is a fibrewise homotopy equivalence with a homotopy inverse $\gamma '_X:P_B(X)\rightarrow X$ defined by
$\gamma '_X(b,\alpha ):=\alpha (0)$.
\end{remark}

The fibrewise cylinder and fibrewise cocylinder constructions give rise to functors
$I_B,P_B:\mathbf{Top}_B\rightarrow \mathbf{Top}_B.$
Associated with the functor $I_B$ there are straightforwardly defined natural
transformations $i_0,i_1:X\rightarrow I_B(X)$ and $\rho
:I_B(X)\rightarrow X$. Similarly, associated
with $P_B$ there are natural transformations
$d_0,d_1:P_B(X)\rightarrow X$ and $c:X\rightarrow P_B(X)$. Moreover, $(I_B,P_B)$ is an
\emph{adjoint pair} in the sense of Baues (see \cite[p.29]{B}).
A \emph{fibrewise fibration} is a fibrewise map
$p:E\rightarrow Y$ satisfying the Homotopy Lifting
Property with respect to any fibrewise space, i.e.,
given any commutative diagram of solid arrows in $\mathbf{Top}_B$
$$\xymatrix{
{Z} \ar[r]^f \ar[d]_{i_0} & {E} \ar[d]^p \\
{I_B(Z)} \ar[r]_H \ar@{.>}[ur] & {Y} }$$
\noindent the dotted arrow exists in $\mathbf{Top}_B$ making the entire diagram commutative.
As the functor $I_B$ is left adjoint to the functor $P_B$ it is easy to check that, actually,
fibrewise fibrations are precisely the internal fibrations in $\mathbf{Top}_B$ with respect to $P_B.$
Therefore, $\mathbf{Top}_B$ together with $P_B$ the fibrewise cocylinder is a $P$-category in the sense of Baues \cite[p.31, Prop (4.6)]{B}.

If $p:E\rightarrow Y$ is any fibrewise map such that it is an ordinary
Hurewicz fibration, then $p$ is a fibrewise fibration. In
general, the converse is not true. For instance, if $X$ is a
fibrewise space, then $p_X:X\rightarrow B$ is always a fibrewise
fibration, but $p_X$ need not be a Hurewicz fibration.

\begin{remark}
From the general axiomatic theory of a $P$-category, if $X$ is any fibrewise space, then the fibrewise map
$\Pi =(d_0,d_1):P_B(X)\rightarrow X\times _B X,$ defined by $(b,\alpha )\mapsto (\alpha (0),\alpha (1))$,
is always a fibrewise fibration, which is not necessarily a Hurewicz fibration.
Nevertheless, we point out that when the projection $p_X:X\rightarrow B$ is a Hurewicz fibration, one can check that $\Pi :P_B(X)\rightarrow X\times _B X$ is also a Hurewicz fibration. This can be checked by simply considering the co-gluing theorem for Hurewicz fibrations in $\mathbf{Top}$ (see, for instance, the dual part of \cite[Chapter II, Lemma 1.2 (a)]{B}).
\end{remark}

A fibrewise map $j:A\rightarrow X$ over $B$ is said to be a \emph{fibrewise
cofibration} if it satisfies the Homotopy Extension Property. That is, for
any fibrewise map $f:X\rightarrow
Y$ and any fibrewise homotopy $H:I_B(A)\rightarrow Y$ such that
$H\circ i_0=f\circ j,$ there exists a fibrewise homotopy
$\widetilde{H}:I_B(X)\rightarrow Y$ such that $\widetilde{H}\circ i_0=f$
and $\widetilde{H}\circ I_B(j)=H$
$$\xymatrix{
{A} \ar[d]_j \ar[r]^{i_0} & {I_B(A)} \ar[d]^H
\ar@/^2pc/[ddr]^{I_B(j)} & \\ {X} \ar[r]^f \ar@/_2pc/[drr]_{i_0} &
{Y} &
\\ & & {I_B(X)} \ar@{.>}_{\widetilde{H}}[ul]}$$

As known, fibrewise cofibrations are cofibrations in the usual
sense. Therefore, without loss of generality, we can consider, fibrewise cofibrations as inclusions $A\hookrightarrow X.$
The pair $(X,A)$ is then called \emph{fibrewise cofibred pair}. Similarly, fibrewise cofibrations are precisely
the internal cofibrations in $\mathbf{Top}_B$ with respect to $I_B.$
Hence, $\mathbf{Top}_B$ together with $I_B$, the fibrewise cylinder, is an $I$-category in the sense of Baues \cite[p.31, Prop (4.6)]{B}.

In fact, if $fib_B,$ $\overline{cof}_B$ and $he_B$ denote the classes of
fibrewise fibrations, closed fibrewise cofibrations (equivalently,
closed fibrewise cofibred pairs), and fibrewise homotopy
equivalences, respectively, then the category $\mathbf{Top}_B$ together with the classes of
$\overline{cof}_B,$ $fib_B$ and $he_B$ has an IP-category
structure in the sense of Baues \cite{B}. Moreover,
the category $\mathbf{Top}_B$ with
the classes $\overline{cof}_B,$ $fib_B$ and $he_B$ has
a proper closed model category structure in the sense of Quillen \cite{GC,M-S}.

A \emph{fibrewise pointed space} over $B$ is a triple $(s_X, X, p_X)$, where $(X, p_X)$ is a fibrewise space and $s_X: B \to X$ is a section of $p_X$, meaning $p_X \circ s_X = id_B$. For simplicity, we will refer to the fibrewise pointed space $(s_X, X, p_X)$ as $X$, unless clarity requires otherwise.  
Given fibrewise pointed spaces $X$ and $Y$, a fibrewise pointed map $f: X \to Y$ is a fibrewise map such that $f \circ s_X = s_Y$.

The category of fibrewise pointed spaces and fibrewise pointed maps will be denoted by $\mathbf{Top}(B)$. Note that the space $B$, together with the identity map, serves as the zero object, making $\mathbf{Top}(B)$ a pointed category. Any subspace $A \subseteq X$ containing the section (i.e., $s_X(B) \subseteq A$) is a fibrewise pointed space. In this case, the inclusion $A \hookrightarrow X$ is a fibrewise pointed map. Such subspaces are called fibrewise pointed subsets of $X$.

For any fibrewise pointed space $X$, we can define its \emph{fibrewise pointed cylinder} as the pushout:
$$
\xymatrix{
{B \times I} \ar[r]^{pr} \ar[d]_{s_X \times id} & {B} \ar[d] \\
{X \times I} \ar[r]  & {I_B^B(X)}. }
$$
The projection is naturally induced by the pushout property. This pointed cylinder functor gives rise to the notion of \emph{fibrewise pointed homotopy} between fibrewise pointed maps, denoted by $\simeq_B^B$. A fibrewise pointed homotopy $F: I_B^B(X) \to Y$ is equivalent to a fibrewise homotopy $F': I_B(X) \to Y$ satisfying $F'(s_X(b), t) = s_X(b)$ for all $b \in B$ and $t \in I$. The notion of \emph{fibrewise pointed homotopy equivalence} follows naturally.

We can also define
$P_B^B(X) = B \times_{B^I} X^I = \{(b, \alpha) \in B \times X^I : c_b = p_X\circ \alpha\},$ which is the fibrewise space $P_B(X)$ together with the section $(id_B, c\circ s_X): B \to P_B^B(X)$, induced by the pullback property. There are functors
$I_B^B, P_B^B: \mathbf{Top}(B) \to \mathbf{Top}(B),$
along with natural transformations $i_0, i_1: X \to I_B^B(X)$, $\rho: I_B^B(X) \to X$, and $d_0, d_1: P_B^B(X) \to X$, $c: X \to P_B^B(X)$. Moreover, $(I_B^B, P_B^B)$ forms an adjoint pair in the sense of Baues.

Using these functors, we define the notions of \emph{(closed) fibrewise pointed cofibration} and \emph{fibrewise pointed fibration}, characterized by the usual Homotopy Extension Property and Homotopy Lifting Property in $\mathbf{Top}(B)$, respectively. Furthermore, $\mathbf{Top}(B)$ has both $I$-category and $P$-category structures as defined by Baues (\cite[p.31]{B}).

Every fibrewise map that is a fibrewise cofibration is also a fibrewise pointed cofibration. Similarly, any fibrewise fibration is a fibrewise pointed fibration. However, as noted by May and Sigurdsson in \cite[p.82]{M-S}, the converses are not generally true, even in the simple case where $B$ is a point.  

Nevertheless, we can consider a suitable subcategory of $\mathbf{Top}(B)$ where the notions of fibrewise and fibrewise pointed cofibrations, fibrewise and fibrewise pointed fibrations, as well as fibrewise and fibrewise pointed homotopy equivalences coincide. This enables a more unified treatment of these concepts within the chosen subcategory.

A \emph{fibrewise well-pointed} space is a fibrewise pointed space $X$ such that the section $s_X: B \to X$ is a closed fibrewise cofibration. Let $\mathbf{Top}_w(B)$ denote the full subcategory of $\mathbf{Top}(B)$ consisting of fibrewise well-pointed spaces. 

\begin{proposition}\label{important1}\cite[Proposition 3.3]{GC}  Let $f: X \to Y$ be a fibrewise pointed map between fibrewise well-pointed spaces over $B$. Then:  
\begin{enumerate}  
\item[(i)] $f$ is a fibrewise pointed fibration if and only if it is a fibrewise fibration.  
\item[(ii)] If $f$ is a closed map, then $f$ is a fibrewise pointed cofibration if and only if it is a fibrewise cofibration.  
\item[(iii)] $f$ is a fibrewise pointed homotopy equivalence if and only if it is a fibrewise homotopy equivalence.  
\end{enumerate}  
\end{proposition}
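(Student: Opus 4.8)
The plan is to derive all three equivalences from a single hypothesis-driven mechanism. Fibrewise well-pointedness says exactly that the sections $s_X\colon B\to X$ and $s_Y\colon B\to Y$ are closed fibrewise cofibrations; equivalently, $(X,s_X(B))$ and $(Y,s_Y(B))$ admit fibrewise NDR representations and $s_X,s_Y$ satisfy the fibrewise Homotopy Extension Property. In each of (i)--(iii) one implication is routine---it amounts to restricting or forgetting the section, and for (iii) it is the elementary fact that a fibrewise pointed homotopy equivalence is in particular a fibrewise homotopy equivalence---so the work is confined to the converse, and this closed-cofibration structure of the section is the only tool I would feed into it. Throughout I would argue inside the $I$-category and $P$-category structures on $\mathbf{Top}_B$ and $\mathbf{Top}(B)$ in the sense of Baues \cite{B}.

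For (i) I would exploit that $P_B^B(X)$ and $P_B(X)$ have the same underlying fibrewise space, so that a fibrewise (pointed) fibration is equivalently encoded by a (pointed) fibrewise lifting function, i.e.\ a (pointed) section of the canonical map $P_B(X)\to P_B(Y)\times_Y X$, $\alpha\mapsto(f\circ\alpha,\alpha(0))$. The transition between the pointed and the free lifting functions then reduces to repairing such a section along $s_X,s_Y$ so that it carries the section of $P_B(Y)\times_Y X$ to the section of $P_B(X)$, which I would achieve by a fibrewise HEP argument governed by the NDR representations of $(X,s_X(B))$ and $(Y,s_Y(B))$. Part (ii) is the genuinely asymmetric case, since the free and the pointed cylinders no longer agree: for well-pointed $X$ the quotient map $I_B(X)\to I_B^B(X)$ comparing them is a fibrewise homotopy equivalence relative to the two ends, and this is what lets the free and the pointed Homotopy Extension Properties of $f$ be transferred into one another. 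Here the closedness hypothesis on $f$ is used to keep $f$ a genuine cofibred inclusion so that these extensions are well behaved.

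For (iii), the conceptual core, I would deliberately avoid the tempting but false step of upgrading a free fibrewise homotopy to a pointed one, since free and pointed fibrewise homotopy differ even for well-pointed spaces. Instead I would invoke the fibrewise cofibre homotopy equivalence theorem in the $I$-category $\mathbf{Top}_B$: when $s_X$ and $s_Y$ are closed fibrewise cofibrations, any fibrewise map $f\colon X\to Y$ with $f\circ s_X=s_Y$ that is a fibrewise homotopy equivalence is automatically a fibrewise homotopy equivalence \emph{under} $B$, and such an equivalence under $B$ is precisely a fibrewise pointed homotopy equivalence. Concretely I would take a free inverse $g$, observe that $g\circ s_Y=g\circ f\circ s_X\simeq_B s_X$, use the fibrewise HEP for $s_Y$ to replace $g$ by a genuinely pointed $g'\simeq_B g$, and then read off the section-preserving homotopies $g'\circ f\simeq_B^B\mathrm{id}_X$ and $f\circ g'\simeq_B^B\mathrm{id}_Y$ from the cofibre homotopy equivalence theorem.

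The step I expect to be the main obstacle is precisely (iii): unlike (i) and (ii), its substantive direction cannot be reduced to a single HEP correction that inserts or deletes a section, and one must genuinely construct a pointed inverse together with section-preserving homotopies. The delicate point is that the construction must be routed through cofibrancy under $B$ rather than through any naive comparison of the free and the pointed homotopy relations; establishing the fibrewise cofibre homotopy equivalence theorem in the form required (or extracting it from the IP-category machinery of \cite{B} together with the proper closed model structure of \cite{GC,M-S}) is where the real effort lies. With that theorem in hand, the three equivalences follow along the lines sketched above.
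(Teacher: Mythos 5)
The first thing to say is that the paper does not prove this statement at all: Proposition~2.1 is quoted verbatim from \cite[Proposition 3.3]{GC} and used as a black box throughout, so there is no in-paper proof to compare your attempt against. Your sketch is therefore being measured against the standard arguments one would expect to find in the cited reference (Str{\o}m-type NDR/HEP manipulations for the (co)fibration statements, and the Dold/Baues ``cofibre homotopy equivalence'' theorem for maps under $B$ in part (iii)). In that sense your overall plan is in the right spirit, and your treatment of (iii) in particular --- refusing to upgrade a free homotopy to a pointed one directly, and instead routing through the theorem that a free fibrewise homotopy equivalence under $B$ between spaces whose sections are closed fibrewise cofibrations is an equivalence under $B$ --- is exactly the standard and correct mechanism.

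That said, as written this is a roadmap rather than a proof: every substantive step (the fibrewise cofibre homotopy equivalence theorem, the claim that $I_B(X)\to I_B^B(X)$ is a fibrewise homotopy equivalence rel the two ends for well-pointed $X$, the section-repair of lifting functions) is deferred to lemmas you do not establish, and those lemmas are precisely the content of \cite[Proposition 3.3]{GC}. There is also one concrete directional worry in (i). The paper's own summary of \cite{M-S} states that any fibrewise fibration is automatically a fibrewise pointed fibration and that it is the \emph{converse} (pointed $\Rightarrow$ free) that can fail without well-pointedness; the same asymmetry is asserted for cofibrations. Your proposed mechanism for (i) --- repairing a free lifting function so that it carries the section of $P_B(Y)\times_Y X$ to the section of $P_B(X)$ --- produces a pointed lifting function from a free one, i.e.\ it addresses the implication ``free $\Rightarrow$ pointed'', while dismissing the other implication as ``forgetting the section''. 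Relative to the conventions stated in this paper, that means the substantive direction of (i) (and likewise the pointed-to-free direction of (ii), where closedness of $f$ must actually enter) is the one your sketch passes over. You should pin down precisely which implication is automatic in each of the three cases before the HEP/NDR repair argument can be trusted to land on the nontrivial one.
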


Although $\mathbf{Top}_w(B)$ is not closed under finite limits and colimits and, therefore, cannot form a model category, the following result suffices for our homotopical framework. 

\begin{proposition}\label{important2}\cite[Proposition 3.2]{GC}  The category $\mathbf{Top}_w(B)$ is closed under pullbacks of fibrewise pointed maps that are fibrewise fibrations. Likewise, it is closed under pushouts of fibrewise pointed maps that are closed fibrewise cofibrations.  
\end{proposition}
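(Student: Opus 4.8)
The statement is the conjunction of two dual closure properties, and the plan is to dispatch the pushout assertion by a short formal argument (a gluing-type statement) and to reduce the pullback assertion to the interaction between fibrewise fibrations and closed fibrewise cofibrations (a cogluing-type statement), which is where essentially all the work lies.

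I begin with the pushout assertion, which I expect to be purely formal. Suppose we are given a pushout in $\mathbf{Top}(B)$ of fibrewise pointed maps
\[
\xymatrix{
A \ar[r]^{j} \ar[d]_{a} & X \ar[d]^{\bar a} \\
Y \ar[r]_{\bar j} & Q
}
\]
with $A,X,Y\in\mathbf{Top}_w(B)$ and $j$ a closed fibrewise cofibration. First I would observe that the pushout $Q_0$ formed in $\mathbf{Top}_B$ carries an induced section $s$ (coming from the agreeing composites $\bar a\circ s_X=\bar j\circ s_Y$) and that $(Q_0,s)$ satisfies the universal property of the pushout in $\mathbf{Top}(B)$: any cocone into a fibrewise pointed $W$ factors uniquely through $Q_0$ in $\mathbf{Top}_B$, and that factorization automatically preserves sections since its restrictions to $X$ and $Y$ do. Thus $Q=Q_0$ and $s_Q=\bar j\circ s_Y$. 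Now $\bar j$ is the cobase change of $j$ along $a$, and since $\overline{cof}_B$ is stable under cobase change in $\mathbf{Top}_B$, the map $\bar j$ is a closed fibrewise cofibration. As $s_Y$ is a closed fibrewise cofibration by hypothesis, the section $s_Q=\bar j\circ s_Y$ is a composite of closed fibrewise cofibrations, hence itself a closed fibrewise cofibration, and $Q\in\mathbf{Top}_w(B)$.

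For the pullback assertion, consider a pullback in $\mathbf{Top}(B)$
\[
\xymatrix{
P \ar[r]^{q} \ar[d]_{\pi} & Y \ar[d]^{g} \\
X \ar[r]_{f} & Z
}
\]
with $X,Y,Z\in\mathbf{Top}_w(B)$ and $g$ a fibrewise fibration. Since the forgetful functor $\mathbf{Top}(B)\to\mathbf{Top}_B$ preserves limits, the underlying fibrewise space of $P$ is $X\times_Z Y$ and the induced section is $s_P=(s_X,s_Y)$, so the entire content is that $s_P$ is a closed fibrewise cofibration. The plan is to verify this at the level of a fibrewise NDR representation. Choosing fibrewise NDR data $(u_X,h_X)$ and $(u_Y,h_Y)$ representing the cofibrations $s_X(B)\hookrightarrow X$ and $s_Y(B)\hookrightarrow Y$, I would set $u_P=\max(u_X\circ\pi,\,u_Y\circ q)$, which satisfies $u_P^{-1}(0)=s_P(B)$ precisely because a point of $P$ lying over $s_X(B)$ and over $s_Y(B)$ must be of the form $(s_X(b),s_Y(b))$. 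For the deformation I would use $h_X$ in the first coordinate and, in the second coordinate, lift the homotopy $t\mapsto f\big(h_X(\pi(-),t)\big)$ in $Z$ through $g$ by the Homotopy Lifting Property, with initial condition $q(-)$; the lift lands in $Y$ and, by construction, the resulting pair lies in $P=X\times_Z Y$. Blending this lift with $h_Y$ should produce a fibrewise homotopy $H_P$ deforming the neighbourhood $\{u_P<1\}$ onto $s_P(B)$ while fixing $s_P(B)$ pointwise.

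The main obstacle is exactly this cogluing construction: arranging the lifts so that the deformation is genuinely defined on all of $P$, stays inside the pullback, fixes the section, terminates in $s_P(B)$ at $t=1$, and is fibrewise at every stage. The delicate point is the compatibility of the three NDR representations with $f$ and $g$ (a normalization of the form $u_Z\circ g\le u_Y$ and a matching of the deformations along $g$), which is what guarantees that the lifted homotopy of the $Y$-coordinate reaches $s_Y(B)$ rather than merely $g^{-1}(s_Z(B))$. This is precisely the fibrewise, cofibration-theoretic dual of the gluing theorem used in the pushout part, and is the fibrewise analogue of the cogluing theorem of Baues cited earlier for Hurewicz fibrations; once it is established, the conclusion $s_P\in\overline{cof}_B$, and hence $P\in\mathbf{Top}_w(B)$, follows immediately.
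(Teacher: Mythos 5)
First, a point of reference: the paper does not prove this statement at all --- it is quoted verbatim from \cite[Proposition 3.2]{GC} --- so there is no in-paper argument to compare yours against; your proposal has to stand on its own.

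Your pushout half does stand on its own: identifying the pointed pushout with the unpointed one, observing $s_Q=\bar j\circ s_Y$, and invoking stability of closed fibrewise cofibrations under cobase change and composition (part of the $I$-category structure on $\mathbf{Top}_B$ already recorded in Section 2) is complete and correct.

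The pullback half, however, has a genuine gap, and you name it yourself: the entire content of the statement is the ``cogluing'' step, and you do not establish it. Worse, the specific construction you sketch would not go through as described. After lifting $t\mapsto f(h_X(\pi(-),t))$ through $g$ with initial condition $q(-)$, the second coordinate at $t=1$ lands only in $g^{-1}(s_Z(B))$, not in $s_Y(B)$; ``blending with $h_Y$'' afterwards is not available, because the endpoint of the lift need not lie in the halo $\{u_Y<1\}$ where $h_Y$ deforms into the section, and applying $h_Y$ to the second coordinate alone would in general push the pair out of $X\times_Z Y$ (since $g\circ h_Y(\cdot,t)$ need not stay over $s_Z(B)$). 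You also need the lift to be stationary on $s_P(B)$, which requires a relative lifting argument you do not mention. A cleaner route, which uses only tools the paper itself invokes later (in \Cref{previo2} and \Cref{lemma-comparison1}), is to factor $s_P$ as
$$B\longrightarrow \pi^{-1}(s_X(B))\hookrightarrow X\times_Z Y,$$
note that $\pi$ is a fibrewise fibration (base change of $g$) so that, by the fibrewise analogue of \cite[Theorem 12]{strom1968note}, the inclusion $\pi^{-1}(s_X(B))\hookrightarrow X\times_Z Y$ is a closed fibrewise cofibration; then identify $\pi^{-1}(s_X(B))\cong g^{-1}(s_Z(B))$ (using $f\circ s_X=s_Z$ and that sections are embeddings), under which the first map becomes the corestriction of $s_Y$ to $g^{-1}(s_Z(B))$. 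Since both $s_Y:B\to Y$ and $g^{-1}(s_Z(B))\hookrightarrow Y$ are closed fibrewise cofibrations, the cancellation property (the fibrewise version of \cite[Lemma 5]{strom1971homotopy}) makes $B\to g^{-1}(s_Z(B))$ a closed fibrewise cofibration, and $s_P$ is then a composite of two such. Until either this factorization or your NDR compatibility is actually carried out, the pullback assertion remains unproved.
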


Taking the two propositions above into account, it follows that $\mathbf{Top}_B$ induces cofibration and fibration category structures on $\mathbf{Top}_w(B)$ in the sense of Baues.

\section{Introducing parametrized homotopic distance}\label{sec:phd}

We here introduce the central concept of our study, along with its formal definition and the key initial properties that will be explored in this section.

\begin{definition}
Let $f,g:X\to Y$ be two fibrewise maps between fibrewise spaces $X$ and $Y$ over $B$. The parametrized homotopic distance between $f$ and $g$, denoted $D_B(f,g)$, is defined as the smallest integer $n$ for which there exists an open cover $\{U_0,\dots, U_n\}$ of $X$ such that $f|_{U_i}\simeq_{B}g|_{U_i}$ for $0\leq i\leq n$.  If no such open cover exists, we set $D_B(f,g)=\infty$. 
\end{definition}
The parametrized homotopic distance serves as a measure of how close two fibrewise maps are to being fibrewise homotopic. Moreover, given two fibrewise maps $f, g: X \rightarrow Y$, it is clear that, in general,  $D(f, g) \leq D_B(f, g)$. This inequality can, of course, be strict.

From the definition above, we immediately obtain the following properties:
\begin{enumerate}
\item[(i)] $D_B(f,g)=D_B(g,f);$ 

\item[(ii)] $D_B(f,g)=0$ if, and only if, $f\simeq _B g$;

\item[(iii)] If $f\simeq _B g$ and $f'\simeq _B g'$, then $D_B(f,g)=D_B(f',g').$
    
\end{enumerate}

Our goal is to establish a connection between the parametrized homotopic distance and the fibrewise sectional category. To this end, we first recall the notion of fibrewise sectional category, originally introduced and extensively developed in \cite{GC}. Here, we adopt this framework and utilize several of its key properties to achieve our objectives.

\begin{definition}
The fibrewise sectional category of a fibrewise map $f: E \rightarrow X$, denoted $\mbox{secat}_B(f)$, is the smallest integer $n \geq 0$ such that $X$ admits an open cover $\{U_i\}_{i=0}^n$ where each $U_i$ has a fibrewise homotopy section $s_i: U_i \rightarrow E$ of $f$, meaning that $f \circ s_i \simeq_B \mbox{inc}_{U_i}$:
$$
\xymatrix{
    {U_i} \ar@{^(->}[rr]^{\mbox{inc}_{U_i}} \ar[dr]_{s_i} & & {X} \\
    & {E}. \ar[ur]_f &
}
$$
\noindent If no such $n$ exists, then we set $\mbox{secat}_B(f) = \infty$.
\end{definition}

\begin{remark} As in the classical case, when $f: E \rightarrow X$ is a fibrewise fibration, the definition of fibrewise sectional category can be refined by requiring the triangles to be strictly commutative, rather than merely fibrewise homotopy commutative.
\end{remark}

Given two fibrewise maps $f,g:X\rightarrow Y,$ we can consider the following pullback square:
\begin{equation}\label{dia: pullback-fibrewise-fib}
 \xymatrix{
{\mathcal{P}_B(f,g)} \ar[d]_{\widetilde{\Pi }_Y} \ar[r] & {P_B(Y)} \ar[d]^{\Pi _Y} \\
{X} \ar[r]_{(f ,g )} & {Y\times _BY.} }   
\end{equation}
Note that $\widetilde{\Pi }_Y$ is a fibrewise fibration, as $\Pi_Y$ is, and they are connected by a pullback.
\begin{proposition}\label{prop:phd-equals-secatB}
$D_B(f,g)=\mbox{secat}_B(\widetilde{\Pi} _Y)$  
\end{proposition}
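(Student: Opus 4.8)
The plan is to prove the equality by establishing, for every open subset $U\subseteq X$, the local equivalence
\[
f|_U\simeq_B g|_U \quad\Longleftrightarrow\quad \widetilde{\Pi}_Y \text{ admits a fibrewise homotopy section over } U.
\]
Once this is in place, both $D_B(f,g)$ and $\mathrm{secat}_B(\widetilde{\Pi}_Y)$ are computed as the least $n$ for which $X$ admits an open cover $\{U_0,\dots,U_n\}$ whose members satisfy the (now equivalent) conditions, so the two quantities agree, including the case in which no finite such cover exists. To argue concretely I would first unwind the pullback \eqref{dia: pullback-fibrewise-fib}: using \eqref{eq:PBX} and \Cref{prim}, a point of $\mathcal{P}_B(f,g)$ is a pair $(x,\alpha)$ with $x\in X$ and $\alpha\in Y^I$ a path lying in a single fibre of $Y$, subject to $\alpha(0)=f(x)$ and $\alpha(1)=g(x)$; the fibre of $\alpha$ is then forced to be $p_X(x)$, and $\widetilde{\Pi}_Y(x,\alpha)=x$.

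For the implication $(\Rightarrow)$ I would start from a fibrewise homotopy $H\colon I_B(U)\to Y$ with $H\circ i_0=f|_U$ and $H\circ i_1=g|_U$; fibrewiseness gives $p_Y(H(x,t))=p_X(x)$ for all $t$, so each path $t\mapsto H(x,t)$ lies in the fibre over $p_X(x)$. Taking adjoints (legitimate since $I$ is locally compact Hausdorff) turns $H$ into a continuous map $U\to P_B(Y)$, and pairing it with $\mathrm{inc}_U$ yields, by the universal property of the pullback, a continuous fibrewise map $s_U\colon U\to\mathcal{P}_B(f,g)$, $x\mapsto (x,\,t\mapsto H(x,t))$. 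By construction $\widetilde{\Pi}_Y\circ s_U=\mathrm{inc}_U$, so $s_U$ is even a strict fibrewise section over $U$.

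For $(\Leftarrow)$, given a fibrewise homotopy section $s_U\colon U\to\mathcal{P}_B(f,g)$ with $\sigma:=\widetilde{\Pi}_Y\circ s_U\simeq_B \mathrm{inc}_U$, I would write $s_U(x)=(\sigma(x),\alpha_x)$ and define $K\colon I_B(U)\to Y$ by $K(x,t)=\alpha_x(t)$; its adjoint $x\mapsto\alpha_x$ is the path part of the second component of $s_U$, so $K$ is continuous by the exponential law. Since $\alpha_x$ is a path in the fibre over $p_X(\sigma(x))=p_X(x)$ running from $f(\sigma(x))$ to $g(\sigma(x))$, the map $K$ is a fibrewise homotopy $f\circ\sigma\simeq_B g\circ\sigma$. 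Composing the homotopy $\sigma\simeq_B \mathrm{inc}_U$ on the left with $f$ and with $g$ and chaining, I obtain
\[
f|_U=f\circ\mathrm{inc}_U\simeq_B f\circ\sigma\simeq_B g\circ\sigma\simeq_B g\circ\mathrm{inc}_U=g|_U,
\]
as required.

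The only delicate points I would treat with care are the continuity of the adjoint maps (handled by the exponential law for the compact-open topology together with the explicit description of $P_B(Y)$ in \Cref{prim}) and the bookkeeping of the fibrewise conditions, namely that every relevant path stays inside a single fibre and that all constructed maps respect the projections to $B$. The non-strict section in $(\Leftarrow)$ poses no real obstacle: the intermediate map $\sigma$ need not equal the inclusion, but the left-composed homotopies above absorb the discrepancy; alternatively, since $\widetilde{\Pi}_Y$ is a fibrewise fibration, one may replace homotopy sections by strict ones from the outset, as noted in the remark following the definition of $\mathrm{secat}_B$.
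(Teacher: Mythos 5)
Your argument is correct and follows essentially the same route as the paper: the adjunction between fibrewise homotopies $f|_U\simeq_B g|_U$ and maps $U\to P_B(Y)$, combined with the universal property of the pullback defining $\mathcal{P}_B(f,g)$, gives the local equivalence, which is then applied to open covers. Your extra care with non-strict homotopy sections in the converse direction is a worthwhile refinement of a point the paper treats implicitly, but it does not change the approach.
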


\begin{proof}
Let $U \subset X$ be an open subset with a fibrewise homotopy $F: f|_{U} \simeq_B g|_{U}$. Denote by $s: U \to P_B(Y)$ the adjoint map of $F$. By the universal property of the pullback, we obtain the following induced diagram:
$$
\xymatrix{
U \ar@/^1pc/[drr]^{s} \ar@/_1pc/[ddr]_{inc_U} \ar@{.>}[dr]^{\sigma } & & \\
 & \mathcal{P}_B(f,g) \ar[r] \ar[d]_{\widetilde{\Pi}_Y} & P_B(Y) \ar[d]^{\Pi_Y} \\
 & X \ar[r]_{(f,g)} & Y \times_B Y.
}
$$
\noindent This diagram shows that $\sigma $ provides a local section of the fibrewise map $\widetilde{\Pi}_Y$ over $U$. Conversely, given such a local section $\sigma: U \to \mathcal{P}_B(f,g)$, the composition of $\sigma$ with the fibrewise map $\mathcal{P}_B(f,g) \to P_B(Y)$ yields a fibrewise homotopy $f|_{U} \simeq_B g|_{U}$. By applying this construction to open covers, we obtain the result.
\end{proof}

\begin{corollary}\label{cor:parametrizedTC-as-phd}
Let $X$ be a fibrewise space. Denote by $pr_1,pr_2:X\times _BX\rightarrow X$ the projection maps corresponding to the first and second factors in the fibrewise product space $X\times _BX$.  Then, 
$$\TC_B(X)=D_B(pr_1,pr_2).$$
\end{corollary}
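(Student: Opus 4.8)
The plan is to derive this corollary directly from \Cref{prop:phd-equals-secatB}, which already identifies $D_B(f,g)$ with the fibrewise sectional category of the fibrewise fibration $\widetilde{\Pi}_Y$ arising from the pullback of $\Pi_Y:P_B(Y)\to Y\times_B Y$ along $(f,g)$. Specializing to $Y=X$ and $(f,g)=(pr_1,pr_2)$, the pair map becomes $(pr_1,pr_2):X\times_B X\to X\times_B X$, which is precisely the identity. The key observation is therefore that pulling back $\Pi_X:P_B(X)\to X\times_B X$ along the identity returns $\Pi_X$ itself, so that $\widetilde{\Pi}_X$ may be identified (up to the canonical isomorphism of the pullback) with $\Pi_X=(d_0,d_1):P_B(X)\to X\times_B X$. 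Consequently \Cref{prop:phd-equals-secatB} gives $D_B(pr_1,pr_2)=\sct_B(\widetilde{\Pi}_X)=\sct_B(\Pi_X)$.

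The second ingredient I would invoke is the definition of parametrized (fibrewise) topological complexity as the fibrewise sectional category of the endpoint-evaluation fibration, namely $\TC_B(X)=\sct_B(\Pi_X)$, where $\Pi_X=(d_0,d_1):P_B(X)\to X\times_B X$ sends a fibrewise path to its pair of endpoints. This is the defining formulation from García-Calcines's fibrewise framework in \cite{Calcines-fibrewiseTC}; here $P_B(X)$ is exactly the fibrewise free path space introduced in \Cref{prim}, consisting of paths lying in a single fibre, and $\Pi_X$ is the canonical fibrewise fibration noted in the remark following \eqref{eq:PBX}. Combining the two identifications yields the chain $D_B(pr_1,pr_2)=\sct_B(\Pi_X)=\TC_B(X)$, which is the desired equality.

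The only step requiring genuine care is the identification $\widetilde{\Pi}_X\cong\Pi_X$, and more precisely checking that the pullback of a fibrewise fibration along the identity map recovers the original fibrewise fibration together with its sectional category. This is essentially formal: the pullback square \eqref{dia: pullback-fibrewise-fib} with bottom map the identity has $\mathcal{P}_B(pr_1,pr_2)$ canonically fibrewise homeomorphic to $P_B(X)$, and under this homeomorphism $\widetilde{\Pi}_X$ corresponds to $\Pi_X$. Since fibrewise sectional category is invariant under such canonical isomorphisms (indeed under fibrewise homotopy equivalence over $X\times_B X$), we get $\sct_B(\widetilde{\Pi}_X)=\sct_B(\Pi_X)$. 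I expect no real obstacle here; the proof is short and the main subtlety is simply recording that the defining formula $\TC_B(X)=\sct_B(\Pi_X)$ matches the sectional category produced by \Cref{prop:phd-equals-secatB}, so that the corollary is an immediate consequence once the trivial pullback is unwound.
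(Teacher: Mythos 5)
Your proposal is correct and follows exactly the paper's own route: specialize \Cref{prop:phd-equals-secatB} to $(f,g)=(pr_1,pr_2)=\mathrm{id}_{X\times_B X}$, so that $\widetilde{\Pi}_X=\Pi_X$ and the equality reduces to the defining formula $\TC_B(X)=\mbox{secat}_B(\Pi_X)$. The paper's proof is a one-line version of the same argument; your additional care about the identification of the trivial pullback is fine but not needed beyond the canonical identification.
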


\begin{proof}
Just observe that the fibrewise map $(pr_1,pr_2)$ is the identity on $X\times _BX$ and, consequently, $\widetilde{\Pi} _X=\Pi _X$.  
\end{proof}

Next, we will examine an interesting connection between the parametrized homotopy distance and the fibrewise unpointed LS category, as defined by N. Iwase and M. Sakai in \cite{iwase2010topological}. To begin, we will recall its definition.

\begin{definition}
Let $X$ be a fibrewise pointed space over $B$. The fibrewise unpointed LS category of $X$, denoted $\ct_B^{*}(X)$, is defined as the smallest non-negative integer $n$ for which there exists an open cover $\{U_0,\dots, U_n\}$ of $X$ such that, for each $0\leq i\leq n$, the following diagram commutes up to fibrewise homotopy:
$$
\xymatrix{
    {U_i} \ar@{^(->}[rr]^{\mbox{inc}_{U_i}} \ar[dr]_{p_X|_{U_i}} & & {X} \\
    & {B}. \ar[ur]_{s_X} &
}
$$
% \[\begin{tikzcd}
%U_i \arrow[dr, "p_X|_{U_i}"'] \arrow[rr, hook , "inc_{U_i}"]
%& & X  \\
%& B. \arrow[ur, "s_X"']
%\end{tikzcd}\] 
If no such integer exists, we set $\ct_B^{*}(X)=\infty .$
\end{definition}

\begin{remark}
Observe that $\ct_B^{*}(X)=\mbox{secat}_B(s_X)$ corresponds to the fibrewise sectional category of the section $s_X:B\rightarrow X$, where $s_X$ is regarded as a fibrewise map. Moreover, by the definition of the parametrized homotopic distance, we also have $\ct_B^{*}(X)=D_B(id_X,s_X\circ p_X).$
\end{remark}

Suppose $X$ is a fibrewise pointed space over $B$. Then, $X\times _BX$ is also  fibrewise pointed with projection $p_{X\times _BX}=p_X\circ pr_2$ and section $s_{X\times _BX}=(s_X,s_X)$, that is, $s_{X\times _B X}(b)=(s_X(b),s_X(b)),$ for all $b\in B$.
Consider the fibrewise maps $i_1, i_2:X\to X\times _BX$ defined by 
$$i_1(x) := (x, (s_X \circ p_X)(x)) \quad \text{and} \quad i_2(x) := ((s_X \circ p_X)(x), x)$$ \noindent which can be written more compactly as $i_1=(id_X,s_X\circ p_X)$ and $i_2=(s_X\circ p_X,id_X)$. Then we have the following result:

\begin{proposition}\label{prop:fLScat=DBi1i2}
Let $X$ be a fibrewise pointed space. Then
$\ct_B^{*}(X)=D_B(i_1,i_2)$.
\end{proposition}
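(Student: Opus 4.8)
The plan is to prove the sharper statement that $D_B(i_1,i_2)=D_B(id_X,s_X\circ p_X)$, from which the proposition follows at once via the Remark identifying $\ct_B^{*}(X)=D_B(id_X,s_X\circ p_X)$. In fact I would establish a pointwise equivalence of the two local homotopy conditions: for every open set $U\subseteq X$, one has $i_1|_U\simeq_B i_2|_U$ if and only if $(id_X)|_U\simeq_B (s_X\circ p_X)|_U$. Since both invariants are computed as the minimal size (minus one) of an open cover on whose members the respective pairs of maps become fibrewise homotopic, such a pointwise equivalence forces the two quantities to agree, because any cover witnessing one automatically witnesses the other.

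For the forward implication, I would suppose that $H:I_B(U)\to X\times_B X$ is a fibrewise homotopy with $H\circ i_0=i_1|_U$ and $H\circ i_1=i_2|_U$, and simply compose with the first projection $pr_1:X\times_B X\to X$. This yields a fibrewise homotopy $pr_1\circ H$ from $pr_1\circ i_1|_U=(id_X)|_U$ to $pr_1\circ i_2|_U=(s_X\circ p_X)|_U$, giving the inequality $D_B(id_X,s_X\circ p_X)\le D_B(i_1,i_2)$.

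For the reverse implication, which I expect to be the only step requiring an explicit construction, I would suppose $G:I_B(U)\to X$ is a fibrewise homotopy with $G\circ i_0=(id_X)|_U$ and $G\circ i_1=(s_X\circ p_X)|_U$, write $\bar G$ for the reversed homotopy $\bar G(x,t):=G(x,1-t)$, and define $H:=(G,\bar G):I_B(U)\to X\times_B X$. The pair lands in the fibrewise product because $G$ being a fibrewise homotopy gives $p_X\circ G(x,t)=p_X(x)=p_X\circ\bar G(x,t)$ for all $t$, so each $(G(x,t),\bar G(x,t))$ lies in $X\times_B X$, and $H$ is itself fibrewise over $B$. Evaluating at the endpoints, $H\circ i_0=(id_X|_U,(s_X\circ p_X)|_U)=i_1|_U$ and $H\circ i_1=((s_X\circ p_X)|_U,id_X|_U)=i_2|_U$, so $H$ realizes the fibrewise homotopy $i_1|_U\simeq_B i_2|_U$ and yields $D_B(i_1,i_2)\le D_B(id_X,s_X\circ p_X)$.

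The only point demanding care is verifying that the paired homotopy $(G,\bar G)$ genuinely factors through the pullback $X\times_B X$ and remains fibrewise over $B$; this is immediate once one records that a fibrewise homotopy keeps each of its tracks inside a single fibre of $p_X$. No well-pointedness hypothesis is needed here, since $D_B$ denotes the unpointed fibrewise distance and the homotopies involved are not required to preserve the sections. Combining the two implications gives the equality $D_B(i_1,i_2)=D_B(id_X,s_X\circ p_X)=\ct_B^{*}(X)$.
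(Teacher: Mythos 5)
Your proof is correct and takes essentially the same approach as the paper: both arguments reduce the claim to showing that an open set $U$ is fibrewise categorical if and only if $i_1|_U\simeq_B i_2|_U$, with one direction obtained by composing with the first projection. The only difference is cosmetic --- where the paper uses a concatenated homotopy that moves the two coordinates of $X\times_B X$ one after the other, you run $G$ and its time-reversal simultaneously in the two factors, and your verification that $(G,\bar G)$ lands in $X\times_B X$ and hits $i_1|_U$, $i_2|_U$ at the endpoints is sound.
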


\begin{proof}
Let $U$ be a fibrewise categorical subset of $X$. Then there exists a fibrewise homotopy $H: I_B(U)\to X$ such that $H(u,0)=u$ and $H(u,1)=(s_X\circ p_X)(u)$, for all $u\in U$. 
We can now define the fibrewise homotopy $H': I_B(U)\to X\times_B X$ between $i_{1}|_U$ and $i_{2}|_U$ by 
\[H'(u,t)=\begin{cases}
    (H(u,2t), s_X\circ p_X(u)), & 0\leq t\leq \frac{1}{2},\\
    (s_X\circ p_X(u), H(u, 2-2t)), & \frac{1}{2}\leq t\leq 1.
\end{cases}\]

Now, consider a fibrewise homotopy $F: I_B(U)\to X\times_B X$ between $i_{1}|_U$
and $i_{2}|_U$. Then, composing $F$ with the first projection gives a fibrewise homotopy between the inclusion $i|_{U}:U\hookrightarrow X$ and $s_X\circ p_{X}|_U:U\to X$.
\end{proof}

We now establish a cohomological lower bound and a homotopy dimension-connectivity upper bound for the parametrized homotopic distance. Recall that a fibrewise space $X$ is called \emph{fibrant} if the projection $p_X : X \to B$ is a Hurewicz fibration. As previously commented, in this case, the fibrewise map $\Pi_X : P_B(X) \to X \times_B X$ is also a Hurewicz fibration.

\begin{theorem}\label{thm:lower-upper-bounds}
Let $f, g : X \to Y$ be fibrewise maps with $X$ and $Y$ fibrant spaces. Then the following hold:
\begin{enumerate}
    \item Let $z_1, \dots, z_k \in H^*(Y \times_B Y; R)$, where $R$ is any coefficient ring, such that $\Delta_Y^*(z_i) = 0$ for $1 \leq i \leq k$ and $(f, g)^*(z_1 \cdots z_k) \neq 0$. Then $D_B(f, g) \geq k.$
    
    \item Suppose $Y$ is path-connected and $X$ has the homotopy type of a CW-complex. If $p_Y : Y \to B$ is an $r$-equivalence for some $r\geq 1$, then
    $
    D_B(f, g) \leq \mbox{hdim}(X)/r,
    $
    where $\mbox{hdim}(X)$ denotes the homotopy dimension of $X$, defined as the smallest dimension of any CW-complex that has the homotopy type of $X$.
\end{enumerate}
\end{theorem}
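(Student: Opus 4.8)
The plan is to prove both bounds by reducing to the established identity $D_B(f,g)=\sct_B(\widetilde{\Pi}_Y)$ from \Cref{prop:phd-equals-secatB}, and then transporting the known cohomological and dimensional estimates for fibrewise sectional category through the pullback square \eqref{dia: pullback-fibrewise-fib}. For part (1), I would first note that since $Y$ is fibrant, $\Pi_Y:P_B(Y)\to Y\times_B Y$ is a Hurewicz fibration, and therefore its base change $\widetilde{\Pi}_Y:\mathcal{P}_B(f,g)\to X$ is a Hurewicz fibration as well. The standard cohomological lower bound for sectional category says that $\sct_B(\widetilde{\Pi}_Y)\geq k$ whenever there exist classes in $\ker(\widetilde{\Pi}_Y^*)$ whose product is nonzero. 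So the core of part (1) is to produce such classes from the hypotheses. The natural candidates are $w_i:=(f,g)^*(z_i)\in H^*(X;R)$.

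The key step is to verify that each $w_i$ lies in $\ker(\widetilde{\Pi}_Y^*)$. Consider the map $P_B(Y)\to Y\times_B Y$; I would argue that $\Pi_Y$ factors (up to fibrewise homotopy) through the fibrewise diagonal $\Delta_Y:Y\to Y\times_B Y$, because $P_B(Y)$ deformation retracts fibrewise onto the image of the constant-path section $c:Y\to P_B(Y)$, and $\Pi_Y\circ c=\Delta_Y$. Hence $\Pi_Y^*(z_i)=c^* \Pi_Y^* \Pi_Y^*{}^{-1}\cdots$ — more cleanly, since $\Pi_Y$ and $\Delta_Y$ are related by the homotopy equivalence $c$ with $\Pi_Y\simeq_B \Delta_Y\circ(\text{equivalence})$, we get $\Pi_Y^*(z_i)=0$ from $\Delta_Y^*(z_i)=0$. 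Now chase the pullback square: commutativity gives $\widetilde{\Pi}_Y^* (f,g)^* = (\text{top map})^* \Pi_Y^*$, so $\widetilde{\Pi}_Y^*(w_i)=\widetilde{\Pi}_Y^*(f,g)^*(z_i)=(\text{top})^*\Pi_Y^*(z_i)=0$. Therefore each $w_i\in\ker(\widetilde{\Pi}_Y^*)$, and since $(f,g)^*$ is a ring homomorphism, $w_1\cdots w_k=(f,g)^*(z_1\cdots z_k)\neq 0$. Applying the sectional-category lower bound yields $D_B(f,g)=\sct_B(\widetilde{\Pi}_Y)\geq k$.

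For part (2), the plan is to invoke the dimension-connectivity upper bound for fibrewise sectional category: if the fibre of the fibration $\widetilde{\Pi}_Y$ is suitably connected and the base $X$ has bounded homotopy dimension, then $\sct_B$ is controlled by the ratio $\mathrm{hdim}(X)$ divided by the connectivity-plus-one of the fibre. The fibre of $\widetilde{\Pi}_Y$ over a point is the same as the fibre of $\Pi_Y$, which is the fibrewise based loop space of $Y$ inside a single fibre $Y_b=p_Y^{-1}(b)$. The hypothesis that $p_Y:Y\to B$ is an $r$-equivalence translates, via the long exact sequence of the fibration $Y_b\hookrightarrow Y\to B$ (using that $Y$ is path-connected and fibrant), into the fibre $Y_b$ being $(r-1)$-connected; consequently its loop space is $(r-2)$-connected, giving the fibre of $\widetilde{\Pi}_Y$ connectivity that produces the divisor $r$ in the Ganea–Schwarz-type estimate. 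I would then combine this with the CW hypothesis on $X$ and path-connectedness of $Y$ to legitimately apply the standard upper bound, obtaining $D_B(f,g)=\sct_B(\widetilde{\Pi}_Y)\leq \mathrm{hdim}(X)/r$.

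I expect the main obstacle to be the fibrewise connectivity bookkeeping in part (2): carefully identifying the fibre of $\widetilde{\Pi}_Y$ with a fibrewise loop space, converting the $r$-equivalence hypothesis on $p_Y$ into the correct connectivity of that fibre within a single fibre of $Y$, and ensuring that the fibrewise version of the Schwarz/Ganea upper bound for sectional category applies under exactly these fibrancy and CW hypotheses. The part (1) argument is more routine once the factorization $\Pi_Y\simeq_B\Delta_Y$ (up to the equivalence $c$) is made precise, since it is a direct pullback cohomology chase; the delicate point there is simply justifying $\Pi_Y^*(z_i)=0$ from $\Delta_Y^*(z_i)=0$ using \Cref{prim}.
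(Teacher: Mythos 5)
Your overall strategy is exactly the paper's: reduce to $D_B(f,g)=\sct_B(\widetilde{\Pi}_Y)$, use fibrancy of $Y$ to make $\widetilde{\Pi}_Y$ a Hurewicz fibration, and then import the classical cohomological and dimension--connectivity estimates for sectional category. Part (1) is correct as written: since $\sct_B(\widetilde{\Pi}_Y)\geq \sct(\widetilde{\Pi}_Y)$ trivially, your pullback chase showing $(f,g)^*(z_i)\in\ker(\widetilde{\Pi}_Y^*)$ (using $\Pi_Y\circ\gamma_Y=\Delta_Y$ with $\gamma_Y$ the fibrewise homotopy equivalence of \Cref{prim}, so $\Delta_Y^*(z_i)=0$ forces $\Pi_Y^*(z_i)=0$) together with Schwarz's nilpotency bound gives $k\leq\sct(\widetilde{\Pi}_Y)\leq D_B(f,g)$. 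This is essentially the paper's argument unwound; the paper instead cites the relative sectional category $\sct_{(f,g)}(\Pi_Y)$ and its lower bound, which formalizes the same computation.

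Part (2), however, has a genuine gap: the inequality between the fibrewise and the ordinary sectional category points the wrong way for an upper bound. The classical Schwarz/Ganea estimate (via the connectivity of the fibre $\Omega Y_b$, which you compute correctly) controls $\sct(\widetilde{\Pi}_Y)$, but $D_B(f,g)=\sct_B(\widetilde{\Pi}_Y)\geq\sct(\widetilde{\Pi}_Y)$, so that estimate alone proves nothing about $D_B(f,g)$. You gesture at ``the dimension--connectivity upper bound for fibrewise sectional category'' without producing it; the missing ingredient is precisely \cite[Theorem 2.10]{GC}, which asserts $\sct_B(\widetilde{\Pi}_Y)=\sct(\widetilde{\Pi}_Y)$ when $\widetilde{\Pi}_Y$ is a Hurewicz fibration between fibrant fibrewise spaces --- i.e., an ordinary local homotopy section can be upgraded to a fibrewise one. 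This is exactly where the hypothesis that $X$ is fibrant (hence $\mathcal{P}_B(f,g)$ is fibrant) is used, and your part (2) never invokes it. You correctly identify this as the main obstacle, but identifying an obstacle is not the same as clearing it; without that equality (or the paper's alternative route through $\sct(\widetilde{\Pi}_Y)=\sct_{(f,g)}(\Pi_Y)$ and the relative upper bound of \cite[Proposition 3.1(2)]{GCrelsecat}), the argument for part (2) is incomplete.
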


\begin{proof}
Consider the fibrewise map $\widetilde{\Pi}_Y:\mathcal{P}_B(f,g)\rightarrow X.$
By \Cref{prop:phd-equals-secatB}, we have $D_B(f, g) = \sct_B(\widetilde{\Pi}_Y)$.
Since $Y$ is fibrant, the map $\Pi_Y: P_B(Y) \rightarrow Y \times_B Y$ is a Hurewicz fibration, which implies that $\widetilde{\Pi}_Y$ is also a Hurewicz fibration. Moreover, since $X$ is fibrant, $\mathcal{P}_B(f, g)$ is fibrant as well. Thus, $\widetilde{\Pi}_Y$ is a fibrewise map between fibrant spaces over $B$.

The equality $\sct_B(\widetilde{\Pi}_Y) = \sct(\widetilde{\Pi}_Y)$ (i.e., the fibrewise sectional category equals the ordinary sectional category of $\widetilde{\Pi}_Y$) follows from \cite[Theorem 2.10]{GC}. Since $\Pi_Y$ is a Hurewicz fibration, \cite[Corollary 1.5]{GCrelsecat} implies that $\sct(\widetilde{\Pi}_Y) = \sct_{(f, g)}(\Pi_Y)$, the relative sectional category of $\Pi_Y$ with respect to the map $(f,g):X\rightarrow Y\times _BY.$

Combining these results with the fact that $\Delta_Y^*(z_i) = 0$ if and only if $(\Pi_Y)^*(z_i) = 0$ for cohomology classes $z_i \in H^*(Y \times_B Y; R)$, we obtain the desired inequality by \cite[Proposition 3.1(1)]{GCrelsecat} (see also \cite[Proposition 3.8 (5)]{Gonzalez-Grant-Vandebroucq}):
$
D_B(f, g) = \sct_{(f, g)}(\Pi_Y) \geq k,
$
thus completing the proof of our first assertion.

Now observe that the assumption that $p_Y : Y \to B$ is an $r$-equivalence implies that $\Pi_Y$ is an $(r-1)$-equivalence. This yields the desired inequality by \cite[Proposition 3.1(2)]{GCrelsecat} or \cite[Proposition 3.8 (4)]{Gonzalez-Grant-Vandebroucq}.
\end{proof}

\begin{example}
 Let $X$ be a fibrant fibrewise space over $B$. By Corollary \ref{cor:parametrizedTC-as-phd}, we have $D_B(f,g)=\TC_B(X)$, where $pr_1, pr_2:X\times_B X\to X$ denote the canonical projections. Note that $X\times_B X$ is also fibrant. Since $(pr_1,pr_2)^*=id_{X\times_B X}^*=id_{H^{*}(X\times_B X;R)}$, the lower bound in \Cref{thm:lower-upper-bounds} aligns with the cohomological lower bound established in \cite[Proposition 7.3]{C-F-W}.
 
 Similarly, if $p_X:X\rightarrow B$ is a locally trivial fibration with an $(r-1)$-connected fiber $F$ (i.e., $p_X$ is an $r$-equivalence), where $X$, $B$ and $F$ are CW-complexes, the upper bound derived in \Cref{thm:lower-upper-bounds} coincides with the upper bound presented in \cite[Proposition 7.2]{C-F-W}.
\end{example}

\section{Properties of parametrized homotopic distance}\label{sec:phd-properties}

This section focuses on analyzing the fundamental properties of the parametrized homotopic distance. We will begin by examining its behavior with respect to the composition of fibrewise maps and will also explore some characteristics derived from the fibrewise versions of the Lusternik-Schnirelman category and topological complexity.

\begin{proposition}\label{prop:composition-inequalities}
Let $f,g:X\rightarrow Y$ be fibrewise maps.
\begin{enumerate}
    \item If $h:Y\rightarrow Z$ is a fibrewise map, then $D_B(h\circ f,h\circ g)\leq D_B(f,g).$ 
    \item If $k:Z\rightarrow X$ is a fibrewise map, then
    $D_B(f\circ k,g\circ k)\leq D_B(f,g).$ 
\end{enumerate}    
\end{proposition}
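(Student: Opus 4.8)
The plan is to prove both inequalities directly from the definition, by showing that any open cover witnessing $D_B(f,g)\le n$ can be transported to an open cover witnessing the corresponding inequality. The only ingredients needed are the functoriality of the fibrewise cylinder $I_B$ and the elementary fact that composing or restricting fibrewise maps preserves the property of being ``over $B$''. In both parts I would first dispose of the trivial case $D_B(f,g)=\infty$ and then assume $D_B(f,g)=n<\infty$, fixing an open cover $\{U_0,\dots,U_n\}$ of $X$ together with fibrewise homotopies $F_i:I_B(U_i)\to Y$ from $f|_{U_i}$ to $g|_{U_i}$.

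For part (1), the key observation is that $h\circ F_i:I_B(U_i)\to Z$ is again a fibrewise homotopy. Indeed, since $F_i$ is fibrewise we have $p_Y\circ F_i=p_{I_B(U_i)}$, and since $h$ is fibrewise we have $p_Z\circ h=p_Y$, so that $p_Z\circ(h\circ F_i)=p_{I_B(U_i)}$. Evaluating at the two ends of the cylinder shows that $h\circ F_i$ is a fibrewise homotopy between $(h\circ f)|_{U_i}$ and $(h\circ g)|_{U_i}$. Hence the same cover $\{U_i\}$ witnesses $D_B(h\circ f,h\circ g)\le n=D_B(f,g)$.

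For part (2), the strategy is instead to pull the cover back along $k$. With $\{U_i\}$ and $F_i$ as above, I would set $V_i:=k^{-1}(U_i)$; continuity of $k$ together with the fact that $\{U_i\}$ covers $X$ guarantees that $\{V_0,\dots,V_n\}$ is an open cover of $Z$. On each $V_i$ the restriction $k|_{V_i}$ factors through $U_i$, so one may form the composite $F_i\circ I_B(k|_{V_i}):I_B(V_i)\to I_B(U_i)\to Y$, where $I_B(k|_{V_i})$ is the cylinder map $(z,t)\mapsto(k(z),t)$. This composite is fibrewise, and its restrictions to the two ends are $(f\circ k)|_{V_i}$ and $(g\circ k)|_{V_i}$; it therefore realizes $(f\circ k)|_{V_i}\simeq_B(g\circ k)|_{V_i}$, giving $D_B(f\circ k,g\circ k)\le n=D_B(f,g)$.

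Both arguments are essentially formal, so I do not expect a genuine obstacle; the only point demanding a little care is the bookkeeping in part (2)—verifying that $\{k^{-1}(U_i)\}$ actually covers $Z$ and that the induced cylinder map $I_B(k|_{V_i})$ indeed lands in $I_B(U_i)$ so that the two homotopies compose—together with the routine check in both parts that every constructed map remains over $B$.
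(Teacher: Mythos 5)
Your proposal is correct and follows essentially the same route as the paper: part (1) by composing the given fibrewise homotopies with $h$, and part (2) by pulling the cover back along $k$ and transferring the homotopies via the induced cylinder map $I_B(k|_{V_i})$. The extra bookkeeping you include (checking that everything remains over $B$ and that $\{k^{-1}(U_i)\}$ covers $Z$) is routine and consistent with the paper's argument.
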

\begin{proof}
To prove part $(1)$, consider an open set $U \subset X$ with a fibrewise homotopy $F: f|_U \simeq_B g|_U$. Composing with $h$, we obtain $h \circ F$, which gives $(h \circ f)|_U \simeq_B (h \circ g)|_U$. Applying this construction to an open cover yields the desired inequality.  
For part $(2)$, let $V := k^{-1}(U) \subset Z$, and denote $\bar{k}: V \to U$ as the restriction of $k$. The induced map $\tilde{k}: I_B(V) \to I_B(U)$ allows us to transfer the homotopy $F$, yielding $(f \circ k)|_V \simeq_B (g \circ k)|_V$. Again, applying this argument to an open cover gives the required inequality.
\end{proof}

Given any fibrewise map $f:X\rightarrow Y$, where $Y$ is fibrewise pointed, the \emph{fibrewise unpointed LS category of $f$}, denoted by $\ct_B^{*}(f)$, is the smallest non-negative integer $n$ (or infinity if such $n$ does not exist) for which there exists an open cover $\{U_0,\dots, U_n\}$ of $X$ such that $f|_{U_i}\simeq _B s_Y\circ p_X|_{U_i}$, for each $0\leq i\leq n$. 
%\[\begin{tikzcd}
%U_i \arrow[dr, "p_X|_{U_i}"'] \arrow[rr, "f|_{U_i}"]
%& & Y  \\
%& B. \arrow[ur, "s_Y"']
%\end{tikzcd}\] 
It is immediate that if $f=id_X$, we recover the notion of  fibrewise unpointed LS category of $X$. Moreover, it follows that $\ct_B^{*}(f)=D_B(f,s_Y\circ p_X).$

\begin{corollary}\label{cor: rel-catBf-catBX-catBY}
Let $f:X\rightarrow Y$ be a fibrewise pointed map. Then
$$\ct_B^{*}(f)\leq \mbox{min}\{ \ct_B^{*}(X), \ct_B^{*}(Y)\}.$$
\end{corollary}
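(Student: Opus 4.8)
The plan is to derive the inequality directly from the observation recorded just before the statement, namely that $\ct_B^{*}(f)=D_B(f,s_Y\circ p_X)$, together with the composition inequalities in \Cref{prop:composition-inequalities}. Since the claimed bound is a minimum of two quantities, it suffices to establish the two inequalities $\ct_B^{*}(f)\leq \ct_B^{*}(X)$ and $\ct_B^{*}(f)\leq \ct_B^{*}(Y)$ separately.

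For the bound $\ct_B^{*}(f)\leq \ct_B^{*}(X)$, I would use the identification $\ct_B^{*}(X)=D_B(id_X,s_X\circ p_X)$ from the earlier remark. Applying part $(1)$ of \Cref{prop:composition-inequalities} with the fibrewise map $f:X\to Y$ gives
\[
D_B(f\circ id_X,\,f\circ s_X\circ p_X)\leq D_B(id_X,\,s_X\circ p_X)=\ct_B^{*}(X).
\]
Now $f\circ id_X=f$, and since $f$ is a fibrewise pointed map we have $f\circ s_X=s_Y$, so $f\circ s_X\circ p_X=s_Y\circ p_X$. Hence the left-hand side equals $D_B(f,s_Y\circ p_X)=\ct_B^{*}(f)$, giving the first inequality.

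For the bound $\ct_B^{*}(f)\leq \ct_B^{*}(Y)$, I would instead use $\ct_B^{*}(Y)=D_B(id_Y,s_Y\circ p_Y)$ and apply part $(2)$ of \Cref{prop:composition-inequalities} with the map $f:X\to Y$ playing the role of $k$:
\[
D_B(id_Y\circ f,\,(s_Y\circ p_Y)\circ f)\leq D_B(id_Y,\,s_Y\circ p_Y)=\ct_B^{*}(Y).
\]
Here $id_Y\circ f=f$, and $(s_Y\circ p_Y)\circ f=s_Y\circ(p_Y\circ f)=s_Y\circ p_X$ because $f$ is a fibrewise map over $B$. Thus the left-hand side is again $D_B(f,s_Y\circ p_X)=\ct_B^{*}(f)$, yielding the second inequality. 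Combining the two bounds gives $\ct_B^{*}(f)\leq \min\{\ct_B^{*}(X),\ct_B^{*}(Y)\}$. There is no genuine obstacle here; the only point requiring care is to correctly invoke the two defining compatibilities of $f$—the pointedness condition $f\circ s_X=s_Y$ for the first bound and the fibrewise condition $p_Y\circ f=p_X$ for the second—so that each composite simplifies to $s_Y\circ p_X$ and the homotopic distances align as claimed.
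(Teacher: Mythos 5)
Your proposal is correct and follows essentially the same route as the paper's own proof: both bounds are obtained by applying parts (1) and (2) of \Cref{prop:composition-inequalities} to $D_B(id_X,s_X\circ p_X)$ and $D_B(id_Y,s_Y\circ p_Y)$ respectively, using $f\circ s_X=s_Y$ and $p_Y\circ f=p_X$ to identify the resulting distances with $\ct_B^{*}(f)$. No discrepancies to report.
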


\begin{proof}
The following inequality follows from part $(1)$ of \Cref{prop:composition-inequalities}: 
\[D_B(f\circ id_X, f\circ s_X\circ p_X)\leq D_B(id_X,s_X\circ p_X).\]
Since $f\circ s_X=s_Y$, we have $D_B(f\circ id_X, f\circ s_X\circ p_X)=\ct_B^{*}(f)$. Furthermore, $D_B(id_X,s_X\circ p_X)=\ct_B^{*}(X)$, so we obtain the inequality $\ct_B^{*}(f)\leq  \ct_B^{*}(X)$.
Similarly, by using part $(2)$ of \Cref{prop:composition-inequalities}, we get the inequality
$D_B(id_Y\circ f,s_Y\circ p_Y\circ f)\leq D_B(id_Y,s_Y\circ p_Y).$
Since $p_Y\circ f=p_X$, we obtain $D_B(id_Y\circ f,s_Y\circ p_Y\circ f)=\ct_B^{*}(f)$ and since $D_B(id_Y,s_Y\circ p_Y)=\ct_B^{*}(Y)$, we conclude $\ct_B^{*}(f)\leq  \ct_B^{*}(Y)$. 
\end{proof}

\begin{corollary}\label{cor:phd-prod-inequality}
Let $f,g:X\to Y$ be fibrewise pointed maps. Then 
$$D_B(f,g)\leq (\ct_B^{*}(f)+1)\cdot (\ct_B^{*}(g)+1).$$
\end{corollary}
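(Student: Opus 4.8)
The plan is to build an explicit open cover of $X$ on which $f$ and $g$ become fibrewise homotopic, obtained by intersecting the covers that witness $\ct_B^{*}(f)$ and $\ct_B^{*}(g)$; the inequality is really a triangle-type bound for $D_B$ with the intermediate map $s_Y\circ p_X$. If either invariant is infinite the asserted bound is vacuous, so I would assume $m:=\ct_B^{*}(f)$ and $n:=\ct_B^{*}(g)$ are finite. Using the identification $\ct_B^{*}(f)=D_B(f,s_Y\circ p_X)$ recorded just before \Cref{cor: rel-catBf-catBX-catBY}, first I would choose an open cover $\{U_0,\dots,U_m\}$ of $X$ together with fibrewise homotopies $F_i:f|_{U_i}\simeq_B (s_Y\circ p_X)|_{U_i}$, and likewise, from $\ct_B^{*}(g)=D_B(g,s_Y\circ p_X)$, an open cover $\{V_0,\dots,V_n\}$ with fibrewise homotopies $G_j:g|_{V_j}\simeq_B (s_Y\circ p_X)|_{V_j}$.

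Next I would form the pairwise intersections $W_{ij}:=U_i\cap V_j$ for $0\le i\le m$ and $0\le j\le n$. Since both families cover $X$, the $(m+1)(n+1)$ open sets $\{W_{ij}\}$ again cover $X$. On a fixed $W_{ij}$ the restrictions of $F_i$ and $G_j$ give $f|_{W_{ij}}\simeq_B (s_Y\circ p_X)|_{W_{ij}}$ and $g|_{W_{ij}}\simeq_B (s_Y\circ p_X)|_{W_{ij}}$; concatenating $F_i|_{W_{ij}}$ with the reverse of $G_j|_{W_{ij}}$ then produces a fibrewise homotopy $f|_{W_{ij}}\simeq_B g|_{W_{ij}}$. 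Thus $\{W_{ij}\}$ is an open cover of $X$ of cardinality $(m+1)(n+1)$ on each member of which $f$ and $g$ are fibrewise homotopic, so by definition $D_B(f,g)\le (m+1)(n+1)-1$, which is in fact slightly stronger than the claimed bound $(\ct_B^{*}(f)+1)\cdot(\ct_B^{*}(g)+1)$.

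The only point requiring care — and the nearest thing to an obstacle — is verifying that the homotopy operations used remain inside the fibrewise category. Restriction of a fibrewise homotopy to an open subset is again fibrewise, and the reversal and concatenation of fibrewise homotopies are fibrewise because these operations only reparametrize the $I$-coordinate of $I_B(W_{ij})$ while leaving the projection to $B$ unchanged; hence symmetry and transitivity of $\simeq_B$ hold exactly as in the unparametrized setting. Granting this routine check, the counting argument above yields the inequality. It is worth noting that an additive bound of the form $\ct_B^{*}(f)+\ct_B^{*}(g)$ is \emph{not} available by this method: transitivity forces both homotopies onto the common refinement $U_i\cap V_j$, which is precisely what turns the estimate into a product rather than a sum.
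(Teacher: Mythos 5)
Your proof is correct and follows essentially the same route as the paper: identify $\ct_B^{*}(f)=D_B(f,s_Y\circ p_X)$ and $\ct_B^{*}(g)=D_B(g,s_Y\circ p_X)$, intersect the two witnessing covers, and concatenate the homotopies through $s_Y\circ p_X$ on each $U_i\cap V_j$. Your observation that the argument actually yields the slightly sharper bound $D_B(f,g)\leq(\ct_B^{*}(f)+1)(\ct_B^{*}(g)+1)-1$ is accurate (and the paper later improves this to the additive bound $\ct_B^{*}(f)+\ct_B^{*}(g)$ via the triangle inequality when $X$ is normal), but the paper's own proof of this corollary is the same counting argument you give.
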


\begin{proof}
Recall that $\ct_B^*(f) = D_B(f, s_Y \circ p_X)$. Similarly, we have $\ct_B^*(g) = D_B(g, s_Y \circ p_X)$. Now, consider the open covers $\{U_0, \dots, U_n\}$ and $\{V_0, \dots, V_m\}$ of $X$, such that $f{|U_i} \simeq_B (s_Y \circ p_X)|_{U_i}$ for $0 \leq i \leq n$, and $g|_{V_j} \simeq_B (s_Y \circ p_X)|_{V_j}$ for $0 \leq j \leq m$, respectively.

We construct another open cover $\{W_{ij} = U_i \cap V_j \mid 0 \leq i \leq n, 0 \leq j \leq m\}$ of $X$, for which we have $f|_{W_{ij}} \simeq_B (s_Y \circ p_X)|_{W_{ij}} \simeq_B g|_{W_{ij}}$ for $0 \leq i \leq n$ and $0 \leq j \leq m$.
This construction establishes the desired inequality.
\end{proof}

\begin{proposition}\label{prop:phd-composition-ineq}
Let $h,h':Z\to X$ and $f,g:X\to Y$ be fibrewise maps such that $f\circ h'\simeq_B g\circ h'$. Then 
$D_B(f\circ h,g\circ h)\leq D_B(h,h').$    

\end{proposition}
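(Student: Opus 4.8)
The plan is to begin from an optimal open cover realizing $D_B(h,h')$ and argue that the very same cover certifies the desired bound for $D_B(f\circ h,g\circ h)$. Concretely, suppose $D_B(h,h')=n$ (the case $D_B(h,h')=\infty$ being vacuous) and choose an open cover $\{U_0,\dots,U_n\}$ of $Z$ together with fibrewise homotopies $h|_{U_i}\simeq_B h'|_{U_i}$ for each $0\leq i\leq n$. The aim is then to produce, on each $U_i$, a fibrewise homotopy between $(f\circ h)|_{U_i}$ and $(g\circ h)|_{U_i}$, so that $\{U_0,\dots,U_n\}$ becomes an admissible cover for the distance $D_B(f\circ h,g\circ h)$.

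The key step is a three-stage concatenation of fibrewise homotopies over each $U_i$. First, post-composing the given homotopy $h|_{U_i}\simeq_B h'|_{U_i}$ with $f$ yields $(f\circ h)|_{U_i}\simeq_B (f\circ h')|_{U_i}$; this uses that post-composition with a fibrewise map preserves fibrewise homotopy, exactly as in the proof of part $(1)$ of \Cref{prop:composition-inequalities}. Second, restricting the global hypothesis $f\circ h'\simeq_B g\circ h'$ to $U_i$ gives $(f\circ h')|_{U_i}\simeq_B (g\circ h')|_{U_i}$. Third, post-composing the (reversed) homotopy $h'|_{U_i}\simeq_B h|_{U_i}$ with $g$ yields $(g\circ h')|_{U_i}\simeq_B (g\circ h)|_{U_i}$. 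Since $\simeq_B$ is an equivalence relation on the set of fibrewise maps out of the fixed fibrewise space $U_i$, concatenating these three fibrewise homotopies produces the required $(f\circ h)|_{U_i}\simeq_B (g\circ h)|_{U_i}$.

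As this holds for every index $i$, the cover $\{U_0,\dots,U_n\}$ witnesses $D_B(f\circ h,g\circ h)\leq n=D_B(h,h')$, which is the assertion.

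Rather than a genuine obstacle, the only point requiring care is the bookkeeping of fibrewise homotopies: one must check that each stage of the concatenation is again a \emph{fibrewise} homotopy (each stage projects compatibly onto $B$ through the common projection $p_Z|_{U_i}$), and that the restriction of the global homotopy $f\circ h'\simeq_B g\circ h'$ to the open subset $U_i$ remains a valid fibrewise homotopy. Both follow directly from the definition of $\simeq_B$ and the compatibility of restriction with the fibrewise cylinder $I_B$, so no delicate argument is needed.
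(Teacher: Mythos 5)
Your proposal is correct and follows exactly the paper's argument: on each open set of an optimal cover for $D_B(h,h')$ one forms the chain $(f\circ h)|_{U}\simeq_B (f\circ h')|_{U}\simeq_B (g\circ h')|_{U}\simeq_B (g\circ h)|_{U}$ by post-composing the given homotopy with $f$ and $g$ and inserting the restricted global hypothesis, then concatenates. No differences worth noting.
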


\begin{proof}
Consider $U \subset Z$ an open subset such that $h|_{U} \simeq_B h'|_{U}$. Since $f \circ h' \simeq_B g \circ h'$, it follows that $f \circ h'|_{U} = (f \circ h')|_{U} \simeq_B (g \circ h')|_{U} = g \circ h'|_{U}$. Therefore, we have  
$$
(f \circ h)|_{U} = f \circ h|_{U} \simeq_B f \circ h'|_{U} \simeq_B g \circ h'|_{U} \simeq_B g \circ h|_{U} = (g \circ h)|_{U}.
$$  
By applying this argument to an open cover, the result follows.
\end{proof}

\begin{corollary}\label{prop:phd-ineq-ccatBdomain-TCBcodomain}
Let $f,g:X\to Y$ be two fibrewise pointed maps. Then \[D_B(f,g)\leq \mbox{min}\{\ct_B^{*}(X),\TC_B(Y)\}.\]   
\end{corollary}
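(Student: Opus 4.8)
The plan is to prove the two bounds $D_B(f,g)\leq \ct_B^{*}(X)$ and $D_B(f,g)\leq \TC_B(Y)$ separately, and then conclude by taking the minimum.

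For the bound $D_B(f,g)\leq \ct_B^{*}(X)$, I would apply \Cref{prop:phd-composition-ineq} with a suitably chosen pair of auxiliary maps. The natural candidates are $h=id_X$ and $h'=s_X\circ p_X$, since then $D_B(h,h')=D_B(id_X,s_X\circ p_X)=\ct_B^{*}(X)$. To invoke the proposition I must verify its hypothesis, namely that $f\circ h'\simeq_B g\circ h'$, i.e.\ $f\circ s_X\circ p_X\simeq_B g\circ s_X\circ p_X$. Here I would use that $f,g$ are fibrewise pointed: $f\circ s_X=s_Y=g\circ s_X$, so in fact $f\circ s_X\circ p_X=g\circ s_X\circ p_X$ on the nose, which certainly gives a fibrewise homotopy. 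Then \Cref{prop:phd-composition-ineq} yields $D_B(f\circ id_X,g\circ id_X)=D_B(f,g)\leq D_B(id_X,s_X\circ p_X)=\ct_B^{*}(X)$.

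For the bound $D_B(f,g)\leq \TC_B(Y)$, the idea is to factor the pair $(f,g)$ through the diagonal situation governing $\TC_B(Y)$. Writing $(f,g):X\to Y\times_B Y$ for the induced fibrewise map, I observe that $f=pr_1\circ(f,g)$ and $g=pr_2\circ(f,g)$, where $pr_1,pr_2:Y\times_B Y\to Y$ are the projections. Since by \Cref{cor:parametrizedTC-as-phd} we have $\TC_B(Y)=D_B(pr_1,pr_2)$, I would apply part $(2)$ of \Cref{prop:composition-inequalities} with the map $k=(f,g):X\to Y\times_B Y$ to obtain $D_B(f,g)=D_B(pr_1\circ(f,g),pr_2\circ(f,g))\leq D_B(pr_1,pr_2)=\TC_B(Y)$.

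Combining the two inequalities gives $D_B(f,g)\leq \min\{\ct_B^{*}(X),\TC_B(Y)\}$, completing the proof. I do not expect a serious obstacle here, since both halves reduce to already-established composition inequalities; the only point requiring care is making sure the pointedness hypothesis on $f$ and $g$ is genuinely used (it is essential for the first inequality, guaranteeing $f\circ s_X\circ p_X=g\circ s_X\circ p_X$), whereas the second inequality needs only that $f$ and $g$ are fibrewise maps into the same fibrewise space $Y$.
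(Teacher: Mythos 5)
Your proof is correct. The first half (the bound by $\ct_B^{*}(X)$) is exactly the paper's argument: same choice $h=id_X$, $h'=s_X\circ p_X$ in \Cref{prop:phd-composition-ineq}, same use of pointedness to get $f\circ s_X\circ p_X=s_Y\circ p_X=g\circ s_X\circ p_X$. For the second half you diverge: you factor $f=pr_1\circ(f,g)$ and $g=pr_2\circ(f,g)$ and invoke part (2) of \Cref{prop:composition-inequalities} together with $\TC_B(Y)=D_B(pr_1,pr_2)$, staying entirely inside the homotopic-distance formalism. The paper instead passes through the sectional-category characterization: $D_B(f,g)=\sct_B(\widetilde{\Pi}_Y)$ by \Cref{prop:phd-equals-secatB}, and since $\widetilde{\Pi}_Y$ is the pullback of $\Pi_Y$ along $(f,g)$, one gets $\sct_B(\widetilde{\Pi}_Y)\leq\sct_B(\Pi_Y)=\TC_B(Y)$. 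The two routes are essentially the same observation seen from two sides (pulling back along $(f,g)$ versus precomposing with $(f,g)$), but yours is arguably more elementary, needing only the composition inequality and \Cref{cor:parametrizedTC-as-phd}, while the paper's highlights the pullback behaviour of $\sct_B$ that it uses elsewhere. You are also right that pointedness is only needed for the first inequality.
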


\begin{proof}
Observe that $f \circ s_X \circ p_X = s_Y \circ p_X = g \circ s_X \circ p_X$. By setting $h = id_X$ and $h' = s_X \circ p_X$ in \eqref{prop:phd-composition-ineq}, we obtain the inequality  
$$
D_B(f, g) = D_B(f \circ id_X, g \circ id_X) \leq D_B(id_X, s_X \circ p_X) = \ct_B^*(X).
$$  
It follows from \Cref{prop:phd-equals-secatB} that $D_B(f, g) = \sct_B(\widetilde{\Pi}_Y)$. Since $\widetilde{\Pi}_Y$ is the fibrewise pullback of $\Pi_Y$, we deduce the inequality $\sct_B(\widetilde{\Pi}_Y) \leq \sct_B(\Pi_Y) = \TC_B(Y)$. 
\end{proof}

\begin{remark}
Note that the inequality $D_B(f, g) \leq \ct_B^{*}(X)$ can still hold even when $f$ and $g$ are not pointed, provided $X$ is considered a fibrewise pointed space and $Y$ is a vertically connected fibrewise space. By a vertically connected fibrewise space, we mean a fibrewise space where, up to fibrewise homotopy, there exists only one fibrewise map $B \to Y$. When $B$ is a point, this condition reduces to path-connectedness. When $B$ is a CW complex, a fibre bundle over $B$ with fibre $F$ is vertically connected if the dimension of $B$ does not exceed the connectivity of $F$.
\end{remark}

We can also recover a result proved in {\cite[Proposition 13]{Calcines-fibrewiseTC}}:

\begin{corollary}\label{cor:catBX-leq-TCBX-leq-catBXX}
 Let $X$ be a fibrewise pointed space. Then
 \[\ct_B^{*}(X)\leq \TC_B(X)\leq \ct_B^{*}(X\times_B X).\]    
\end{corollary}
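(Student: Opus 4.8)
The plan is to prove the two inequalities separately, in each case exhibiting the quantities as parametrized homotopic distances and then invoking the composition inequalities of \Cref{prop:composition-inequalities}.

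For the left-hand inequality $\ct_B^{*}(X)\leq \TC_B(X)$, recall from \Cref{cor:parametrizedTC-as-phd} that $\TC_B(X)=D_B(pr_1,pr_2)$, where $pr_1,pr_2:X\times_B X\to X$ are the projections. First I would consider the fibrewise pointed map $i_1=(id_X,s_X\circ p_X):X\to X\times_B X$ introduced just before \Cref{prop:fLScat=DBi1i2}, and note that $pr_1\circ i_1=id_X$ while $pr_2\circ i_1=s_X\circ p_X$. Applying part $(2)$ of \Cref{prop:composition-inequalities} with $k=i_1$ then gives
$$
\ct_B^{*}(X)=D_B(id_X,s_X\circ p_X)=D_B(pr_1\circ i_1,pr_2\circ i_1)\leq D_B(pr_1,pr_2)=\TC_B(X),
$$
using the remark that $\ct_B^{*}(X)=D_B(id_X,s_X\circ p_X)$.

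For the right-hand inequality $\TC_B(X)\leq \ct_B^{*}(X\times_B X)$, the idea is to realize $\TC_B(X)$ as the fibrewise unpointed LS category of a suitable map on $X\times_B X$. Observe that $X\times_B X$ is itself a fibrewise pointed space with section $s_{X\times_B X}=(s_X,s_X)$, and that $\ct_B^{*}(X\times_B X)=D_B(id_{X\times_B X},s_{X\times_B X}\circ p_{X\times_B X})$. I would then apply part $(1)$ of \Cref{prop:composition-inequalities} with the map $h=pr_1$ (or more precisely compare the two projections with their compositions through the section): composing with $pr_1$ and $pr_2$ and checking that $pr_1\circ s_{X\times_B X}\circ p_{X\times_B X}=s_X\circ p_X\circ pr_2$ and similarly for $pr_2$ should let one bound $D_B(pr_1,pr_2)$ by $D_B(id_{X\times_B X},s_{X\times_B X}\circ p_{X\times_B X})=\ct_B^{*}(X\times_B X)$.

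The main obstacle I anticipate is getting the second inequality to come out cleanly from the composition lemma: the projections $pr_1,pr_2$ are defined on $X\times_B X$ whereas the LS category of $X\times_B X$ compares $id_{X\times_B X}$ with $s_{X\times_B X}\circ p_{X\times_B X}$, so one must track carefully which factor each projection sees and verify that an open set on which $id_{X\times_B X}\simeq_B s_{X\times_B X}\circ p_{X\times_B X}$ is also an open set on which $pr_1\simeq_B pr_2$. The cleanest route is probably to note that on such an open set $U$, the fibrewise homotopy $id_{X\times_B X}\simeq_B s_{X\times_B X}\circ p_{X\times_B X}$ carries both $pr_1$ and $pr_2$ (via post-composition) to $s_X\circ p_{X\times_B X}|_U$, so $pr_1|_U\simeq_B s_X\circ p_{X\times_B X}|_U\simeq_B pr_2|_U$; applying this to an open cover realizing $\ct_B^{*}(X\times_B X)$ yields the bound. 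Alternatively, one may simply cite \cite[Proposition 13]{Calcines-fibrewiseTC} directly, since the statement is explicitly recovered there.
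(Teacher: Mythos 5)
Your proposal is correct and takes essentially the same route as the paper: the first inequality is exactly the paper's argument (the paper uses $i_2$ where you use $i_1$, an immaterial symmetry), and your direct argument for the second inequality is precisely the content of \Cref{prop:phd-ineq-ccatBdomain-TCBcodomain}, which the paper simply cites (and which in turn rests on \Cref{prop:phd-composition-ineq} with $h=id$, $h'=s_{X\times_BX}\circ p_{X\times_BX}$). Note that your worry about the second inequality is unfounded for exactly the reason you give: both projections send the section-composite to the same map $s_X\circ p_{X\times_BX}$, and transitivity of fibrewise homotopy on a fixed open set needs no normality hypothesis.
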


\begin{proof}
Recall the fibrewise map $i_2:X\to X\times_B X$ defined by $i_2(x)=(s_X\circ p_X(x),x)$. Using part $(2)$ of \Cref{prop:composition-inequalities} and \Cref{cor:parametrizedTC-as-phd}, we obtain 
\[\ct_B^*(X)=D_B(s_X\circ p_X,id_X)=D_B(pr_1\circ i_2,pr_2\circ i_2)\leq D_B(pr_1, pr_2)=\TC_B(X).\]
The inequality $\TC_B(X)=D_B(pr_1,pr_2)\leq \ct_B^*(X\times_B X)$ follows from \Cref{prop:phd-ineq-ccatBdomain-TCBcodomain}.
\end{proof}

The following is a fibrewise homotopy invariance of the parametrized homotopic distance.

\begin{proposition}\label{prop:DBB-compositions-eq}
Let $f, g: X \to Y$ be fibrewise maps. Suppose there exists a fibrewise map $h: Y \to Y'$ with a left fibrewise homotopy inverse. Then, 
$
D_B(h \circ f, h \circ g) = D_B(f, g).
$

Similarly, if there is a fibrewise map $h: X' \to X$ with a right fibrewise homotopy inverse, then
$$
D_B(f \circ h, g \circ h) = D_B(f, g).
$$
  
\end{proposition}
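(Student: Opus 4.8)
The plan is to deduce the statement from the composition inequalities of \Cref{prop:composition-inequalities} together with the fibrewise homotopy invariance property (iii) listed after the definition of $D_B$. The key observation is that both inequalities in \Cref{prop:composition-inequalities} hold unconditionally, so each equality will follow by establishing two opposing inequalities, one of which is immediate and the other of which requires the homotopy inverse hypothesis.

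First I would prove the first equality. Let $h':Y'\to Y$ be a left fibrewise homotopy inverse of $h$, so that $h'\circ h\simeq_B id_Y$. Applying part $(1)$ of \Cref{prop:composition-inequalities} to the map $h$ gives $D_B(h\circ f, h\circ g)\leq D_B(f,g)$. For the reverse inequality, apply part $(1)$ again, this time to the map $h'$ and to the pair of fibrewise maps $h\circ f, h\circ g: X\to Y'$, obtaining $D_B(h'\circ h\circ f, h'\circ h\circ g)\leq D_B(h\circ f, h\circ g)$. Now since $h'\circ h\simeq_B id_Y$, we have $h'\circ h\circ f\simeq_B f$ and $h'\circ h\circ g\simeq_B g$; by property (iii) the left-hand side equals $D_B(f,g)$. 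Combining the two inequalities yields $D_B(h\circ f, h\circ g)=D_B(f,g)$.

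The second equality is entirely dual, using part $(2)$ of \Cref{prop:composition-inequalities} in place of part $(1)$. Let $h':X\to X'$ be a right fibrewise homotopy inverse of $h:X'\to X$, so that $h\circ h'\simeq_B id_X$. Part $(2)$ applied to $h$ gives $D_B(f\circ h, g\circ h)\leq D_B(f,g)$, while part $(2)$ applied to $h'$ and to the pair $f\circ h, g\circ h: X'\to Y$ gives $D_B(f\circ h\circ h', g\circ h\circ h')\leq D_B(f\circ h, g\circ h)$. Since $h\circ h'\simeq_B id_X$, we have $f\circ h\circ h'\simeq_B f$ and $g\circ h\circ h'\simeq_B g$, so property (iii) identifies the left-hand side with $D_B(f,g)$, completing the argument.

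I do not anticipate any serious obstacle here; the only point that requires mild care is verifying that the fibrewise homotopies composed with $h$ or precomposed with $h'$ remain fibrewise homotopies (i.e.\ respect the projections), but this is automatic since all maps and homotopies involved are fibrewise over $B$ and composition of fibrewise maps is fibrewise. The argument is essentially the standard proof that a left- or right-invertible map induces an isomorphism-preserving operation, transported to the setting of $D_B$ via the two composition inequalities.
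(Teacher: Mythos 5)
Your proof is correct and follows essentially the same route as the paper: both establish the chain $D_B(f,g)=D_B(h'\circ h\circ f, h'\circ h\circ g)\leq D_B(h\circ f,h\circ g)\leq D_B(f,g)$ using the two composition inequalities and the homotopy-invariance property, with the second case handled dually. No gaps.
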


\begin{proof}
Let $h': Y' \to Y$ be the left fibrewise homotopy inverse of $h$, meaning that $h' \circ h \simeq_B id_Y$. By applying \Cref{prop:composition-inequalities}, we obtain the following chain of inequalities:
$$
D_B(f, g) = D_B(h' \circ h \circ f, h' \circ h \circ g) \leq D_B(h \circ f, h \circ g) \leq D_B(f, g).
$$
This yields the desired equality. The proof for the second case follows analogously.
\end{proof}

As a consequence, the parametrized homotopic distance is a fibrewise homotopy invariant in the following sense: 

\begin{corollary}\label{invar} Given fibrewise maps $f, g: X \to Y$ and $f', g': X' \to Y'$, and fibrewise homotopy equivalences $\alpha: Y \to Y'$ and $\beta: X' \to X$ such that $\alpha \circ f \circ \beta \simeq_B f'$ and $\alpha \circ g \circ \beta \simeq_B g'$, we have
$D_B(f, g) = D_B(f', g').$
\end{corollary}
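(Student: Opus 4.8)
The plan is to deduce \Cref{invar} directly from \Cref{prop:DBB-compositions-eq} by applying the latter twice, once for each of the maps $\alpha$ and $\beta$. The key observation is that a fibrewise homotopy equivalence possesses both a left and a right fibrewise homotopy inverse (indeed a genuine two-sided one), so both halves of \Cref{prop:DBB-compositions-eq} become available. First I would record that, by property (iii) listed after the definition of $D_B$ (invariance of $D_B$ under fibrewise homotopy of its arguments), the hypotheses $\alpha\circ f\circ\beta\simeq_B f'$ and $\alpha\circ g\circ\beta\simeq_B g'$ give
\[
D_B(f',g')=D_B(\alpha\circ f\circ\beta,\ \alpha\circ g\circ\beta).
\]

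Next I would strip off the equivalences one at a time. Since $\alpha:Y\to Y'$ is a fibrewise homotopy equivalence, it admits a left fibrewise homotopy inverse, so the first part of \Cref{prop:DBB-compositions-eq}, applied to the pair of maps $f\circ\beta,\,g\circ\beta:X'\to Y$, yields
\[
D_B(\alpha\circ f\circ\beta,\ \alpha\circ g\circ\beta)=D_B(f\circ\beta,\ g\circ\beta).
\]
Then, since $\beta:X'\to X$ is a fibrewise homotopy equivalence it admits a right fibrewise homotopy inverse, so the second part of \Cref{prop:DBB-compositions-eq}, applied now to $f,g:X\to Y$, gives
\[
D_B(f\circ\beta,\ g\circ\beta)=D_B(f,g).
\]
Chaining these three equalities together produces $D_B(f',g')=D_B(f,g)$, which is the assertion.

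There is essentially no obstacle here, as the corollary is a formal consequence of \Cref{prop:DBB-compositions-eq} together with the elementary property (iii). The only point requiring a word of care is the domain/codomain bookkeeping: when I peel off $\alpha$ I must regard $f\circ\beta$ and $g\circ\beta$ as maps with common source $X'$ and target $Y$, and when I peel off $\beta$ I revert to $f,g$ with source $X$ and target $Y$; verifying that each invocation of \Cref{prop:DBB-compositions-eq} is applied to maps with matching source and target is the whole of the content. I would present the argument simply as the displayed chain of equalities, citing property (iii) for the first step and the two halves of \Cref{prop:DBB-compositions-eq} for the remaining two.
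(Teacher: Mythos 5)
Your argument is correct and is exactly the deduction the paper intends (the paper states \Cref{invar} as an immediate consequence of \Cref{prop:DBB-compositions-eq} without writing out the proof): use property (iii) to replace $f',g'$ by $\alpha\circ f\circ\beta,\ \alpha\circ g\circ\beta$, then cancel $\alpha$ and $\beta$ via the two halves of \Cref{prop:DBB-compositions-eq}, since a fibrewise homotopy equivalence has both a left and a right fibrewise homotopy inverse. No gaps.
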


\begin{corollary}
If there is a fibrewise homotopy equivalence $f:X\rightarrow X'$, then $\TC_B(X)=\TC_B(X')$. Moreover, if $X$ and $X'$ are fibrewise pointed spaces, and $f$ is a fibrewise pointed map, then $\ct_B^*(X)=\ct_B^*(X')$.
\end{corollary}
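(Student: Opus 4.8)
The plan is to derive both equalities directly from the fibrewise homotopy invariance of $D_B$ established in \Cref{invar}, together with the two descriptions $\TC_B(X)=D_B(pr_1,pr_2)$ from \Cref{cor:parametrizedTC-as-phd} and $\ct_B^{*}(X)=D_B(id_X,s_X\circ p_X)$ recalled above. In each case the task reduces to exhibiting the fibrewise homotopy equivalences $\alpha$ and $\beta$ required by \Cref{invar} and checking the two compatibility homotopies.

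For the first assertion, let $\bar f:X'\to X$ be a fibrewise homotopy inverse of $f$, so that $f\circ\bar f\simeq_B id_{X'}$ and $\bar f\circ f\simeq_B id_X$. The induced fibrewise map $f\times_B f:X\times_B X\to X'\times_B X'$ satisfies $pr_i'\circ(f\times_B f)=f\circ pr_i$, and I would apply \Cref{invar} with $\alpha=f$ and $\beta=\bar f\times_B\bar f$. Using $pr_i\circ(\bar f\times_B\bar f)=\bar f\circ pr_i'$ one computes
$$
\alpha\circ pr_i\circ\beta=f\circ pr_i\circ(\bar f\times_B\bar f)=f\circ\bar f\circ pr_i'\simeq_B pr_i'
$$
for $i=1,2$, which are exactly the two hypotheses of \Cref{invar}. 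It then yields $D_B(pr_1,pr_2)=D_B(pr_1',pr_2')$, i.e. $\TC_B(X)=\TC_B(X')$.

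For the second assertion, $f$ is in addition fibrewise pointed, so $f\circ s_X=s_{X'}$ and $p_{X'}\circ f=p_X$, while $\bar f$, being fibrewise over $B$, satisfies $p_X\circ\bar f=p_{X'}$. I would again invoke \Cref{invar}, now with the pairs $id_X,s_X\circ p_X$ and $id_{X'},s_{X'}\circ p_{X'}$, and with $\alpha=f$, $\beta=\bar f$. Here $\alpha\circ id_X\circ\beta=f\circ\bar f\simeq_B id_{X'}$, and a short chain of identities gives
$$
\alpha\circ(s_X\circ p_X)\circ\beta=f\circ s_X\circ(p_X\circ\bar f)=f\circ s_X\circ p_{X'}=s_{X'}\circ p_{X'},
$$
so both compatibility conditions hold and \Cref{invar} delivers $\ct_B^{*}(X)=\ct_B^{*}(X')$.

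The only genuine point requiring care is that $\beta$ be a fibrewise homotopy equivalence as demanded by \Cref{invar}. In the pointed case this is immediate, since $\bar f$ is by construction a fibrewise homotopy inverse of $f$. In the first case it amounts to the standard fact that the fibrewise product of two fibrewise homotopy equivalences is again a fibrewise homotopy equivalence: applying $(-)\times_B(-)$ to the homotopies $f\circ\bar f\simeq_B id_{X'}$ and $\bar f\circ f\simeq_B id_X$ shows that $\bar f\times_B\bar f$ is a fibrewise homotopy inverse of $f\times_B f$. I expect this product-of-equivalences verification to be the main (though routine) obstacle, everything else being a direct bookkeeping of the projection and section identities.
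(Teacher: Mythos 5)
Your proposal is correct and is exactly the argument the paper intends: the corollary is stated without proof as an immediate consequence of \Cref{invar} together with the identifications $\TC_B(X)=D_B(pr_1,pr_2)$ and $\ct_B^{*}(X)=D_B(id_X,s_X\circ p_X)$, and your choices of $\alpha$ and $\beta$ (with the routine check that $\bar f\times_B\bar f$ is a fibrewise homotopy inverse of $f\times_B f$) fill in the details correctly.
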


Our goal is to prove that the parametrized homotopic distance satisfies the triangle inequality when the domain space is normal, thus defining a metric on the set of fibrewise homotopy classes $[X, Y]_B$. We will use the same tool as in the classical case of homotopic distance, studied by by E. Macías-Virgós and D. Mosquera-Lois.\cite{macias2022homotopic}.

%\begin{lemma}\label{Op-St} Let $Z$ be a normal space with two open covers, $%\mathcal{U} = \{U_i\}_{i=0}^n$ and $\mathcal{V} = \{V_j\}_{j=0}^m$, where each set in $\mathcal{U}$ satisfies Property (A) and each set in $\mathcal{V}$ satisfies Property (B). Assume that Properties (A) and (B) are inherited by open subsets and disjoint unions. Then, there exists an open cover $\mathcal{W} = \{W_k\}_{k=0}^{m+n}$ of $Z$ consisting of open sets, each of which satisfies both Property (A) and Property (B).
%\end{lemma}

\begin{proposition}\label{prop: triangle-inequality}
Let $f,g,h:X\rightarrow Y$ be fibrewise maps where $X$ is a normal space. Then
$$D_B(f,g)\leq D_B(f,h)+D_B(h,g).$$
\end{proposition}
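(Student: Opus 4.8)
The plan is to mimic the classical argument of Macías-Virgós and Mosquera-Lois for the triangle inequality of homotopic distance, carefully replacing every ordinary homotopy by a fibrewise homotopy and checking that the combinatorial covering argument survives. Suppose $D_B(f,h)=m$ and $D_B(h,g)=n$ (if either is infinite there is nothing to prove). Then I would fix an open cover $\{U_0,\dots,U_m\}$ of $X$ on each piece of which $f\simeq_B h$, and an open cover $\{V_0,\dots,V_n\}$ on each piece of which $h\simeq_B g$. On any intersection $U_i\cap V_j$ we then have $f\simeq_B h\simeq_B g$, hence $f|_{U_i\cap V_j}\simeq_B g|_{U_i\cap V_j}$ by transitivity of fibrewise homotopy (concatenating the two fibrewise homotopies, which is legitimate since each lies in a single fibre). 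The naive cover $\{U_i\cap V_j\}$ has $(m+1)(n+1)$ members, giving only the product bound of \Cref{cor:phd-prod-inequality}; the point of the triangle inequality is to refine this to $m+n+1$ sets.

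The refinement is exactly where normality of $X$ enters, and this is the step I expect to be the main obstacle. The standard device is to use a shrinking / diagonal grouping of the $(m+1)(n+1)$ rectangles $U_i\cap V_j$ into $m+n+1$ groups indexed by $k=i+j$, forming $W_k=\bigcup_{i+j=k}(U_i\cap V_j)$. For this to yield a valid cover one must first shrink the cover $\{U_i\}$ (using normality) to a cover $\{U_i'\}$ with $\overline{U_i'}\subset U_i$, and similarly shrink $\{V_j\}$, so that within each diagonal group the relevant pieces can be separated and the fibrewise homotopies on the individual rectangles glued into a single well-defined fibrewise homotopy on all of $W_k$. Concretely, on $W_k$ the distinct rectangles with $i+j=k$ have disjoint closures after shrinking, so one can define a fibrewise homotopy $f|_{W_k}\simeq_B g|_{W_k}$ piecewise without conflict. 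I would reproduce the precise shrinking construction from \cite[Proposition 3.4]{macias2022homotopic} (or the analogous statement therein), verifying at each stage that the homotopies produced are fibrewise, i.e. respect the projections to $B$.

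The key technical checks, in order, are as follows. First, transitivity: given fibrewise homotopies $F:f|_W\simeq_B h|_W$ and $G:h|_W\simeq_B g|_W$, the concatenation $F*G$ is again a fibrewise homotopy, since at each time the image stays in the same fibre as dictated by $p_X$; this is immediate because both $F$ and $G$ satisfy $p_Y\circ F=p_X\circ pr_1$ and likewise for $G$. Second, the shrinking lemma for normal spaces applied twice to produce $\{U_i'\}$ and $\{V_j'\}$. Third, assembling $W_k$ and checking that on each $W_k$ one obtains a single fibrewise homotopy $f|_{W_k}\simeq_B g|_{W_k}$, where the separation of closures guarantees no incompatibility on overlaps. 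Since the $W_k$ for $0\le k\le m+n$ cover $X$ and each admits a fibrewise homotopy between $f$ and $g$, we conclude $D_B(f,g)\le m+n=D_B(f,h)+D_B(h,g)$.

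The only genuinely new ingredient relative to the classical proof is the observation that every homotopy in sight is fibrewise, and that this property is preserved under concatenation and under restriction to the shrunken open sets; since fibrewise homotopy is defined pointwise over $B$ and the covering/shrinking operations happen in the total space $X$, no obstruction arises. Thus the classical argument transfers verbatim once fibrewiseness is tracked, and normality of $X$ is the essential hypothesis enabling the diagonal grouping.
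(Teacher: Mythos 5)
Your argument is correct and follows essentially the same route as the paper: the paper simply cites the covering refinement you sketch (the shrinking-and-diagonal-grouping construction on a normal space) as \cite[Lemma 4.3]{oprea2011mixing}, and then concludes by transitivity of fibrewise homotopy exactly as you do. Your additional verification that concatenation and restriction preserve fibrewiseness is the right (and only) point to check, so nothing is missing.
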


\begin{proof}
Suppose that $D_B(f, g) = m$ and $D_B(g, h) = n$ and consider open covers $\{U_i\}_{i=0}^m$ and $\{V_j\}_{j=0}^m$ of $X$ such that $f|_{U_i} \simeq _B g|_{U_i}$ for all $i\in \{0, \dots, m \}$, and $g|_{V_j} \simeq _B h|_{V_j}$ for all $j\in \{0, \dots, n \}$. Then, by \cite[Lemma 4.3]{oprea2011mixing}, there exists a third open cover $\{W_k\}_{k=0}^{m+n}$ such that $f|_{W_k} \simeq _B g|_{W_k} \simeq _B h|_{W_k}$ for all $k$, thereby proving that $D_B(f, h) \leq m + n$.   
\end{proof}

\begin{remark}
Note that the above result also holds when $X$ is fibrewise homotopy equivalent to a fibrewise space $X'$ that is normal. It suffices to apply Corollary \ref{invar}.  \end{remark}

As a consequence to \Cref{prop: triangle-inequality}, we can improve the inequality in \Cref{cor:phd-prod-inequality}.

\begin{corollary}\label{cor:triangle-ineq}
For fibrewise pointed maps $f,g$ we have
$D_B(f,g)\leq \ct_B^*(f)+\ct_B^*(g).$
\end{corollary}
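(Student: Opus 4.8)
The plan is to apply the triangle inequality of \Cref{prop: triangle-inequality} with a single, well-chosen intermediate map, so that the two distances on the right-hand side become exactly the two fibrewise unpointed LS categories in the statement. The natural candidate for the intermediate map is the ``constant-section'' fibrewise map $s_Y \circ p_X : X \to Y$; this is indeed a fibrewise map since $p_Y \circ s_Y \circ p_X = p_X$.

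First I would recall, from the discussion preceding \Cref{cor: rel-catBf-catBX-catBY}, that for a fibrewise pointed map $f : X \to Y$ one has $\ct_B^{*}(f) = D_B(f, s_Y \circ p_X)$, and likewise $\ct_B^{*}(g) = D_B(g, s_Y \circ p_X)$. The crucial observation is that the intermediate map $s_Y \circ p_X$ depends only on $X$ and $Y$, and not on $f$ or $g$, so it is literally the same map in both identities. This is precisely what allows the two categories to be produced from a single invocation of the triangle inequality.

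Next I would set $h = s_Y \circ p_X$ and apply \Cref{prop: triangle-inequality} to the triple $f, h, g$, obtaining
$$D_B(f,g) \leq D_B(f, s_Y \circ p_X) + D_B(s_Y \circ p_X, g).$$
Finally, combining the symmetry property $D_B(s_Y \circ p_X, g) = D_B(g, s_Y \circ p_X)$ with the two identities above gives $D_B(f,g) \leq \ct_B^{*}(f) + \ct_B^{*}(g)$, which is strictly better than the product bound of \Cref{cor:phd-prod-inequality} whenever both categories are positive.

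There is essentially no deep obstacle here: the entire argument reduces to correctly identifying the intermediate map and then quoting the triangle inequality. The only point demanding care is the hypothesis hidden inside \Cref{prop: triangle-inequality}, namely that $X$ must be normal (or fibrewise homotopy equivalent to a normal space, by the remark following that proposition). Accordingly, the statement should be read under this standing assumption on $X$, and I would flag this explicitly so that the chain of inequalities above is fully justified.
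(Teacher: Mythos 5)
Your proposal is correct and is exactly the paper's argument: take $h=s_Y\circ p_X$ in \Cref{prop: triangle-inequality} and identify the two resulting distances with $\ct_B^*(f)$ and $\ct_B^*(g)$. Your explicit flagging of the normality hypothesis on $X$ (inherited from \Cref{prop: triangle-inequality}) is a reasonable point of care, but it does not alter the approach.
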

\begin{proof}
This follows by taking $h=s_Y\circ p_X$ in \Cref{prop: triangle-inequality}.
\end{proof}

Next, we present another result that follows from \cite[Lemma 4.3]{oprea2011mixing}.

\begin{proposition}\label{prop:triangle-ineq-compositions}
Consider fibrewise maps $f,g:X\rightarrow Y$ and $f',g':Y\rightarrow Z$ where $X$ is normal. Then
$D_B(f'\circ f,g'\circ g)\leq D_B(f,g)+D_B(f',g').$
\end{proposition}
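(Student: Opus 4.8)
The plan is to reduce the two-step composition statement to the one-step triangle inequality already established in \Cref{prop: triangle-inequality}, using an intermediate map so that each resulting distance can be controlled by a single composition inequality from \Cref{prop:composition-inequalities}. Specifically, I would introduce the hybrid map $f'\circ g$ (or equally $g'\circ f$) as a bridge between $f'\circ f$ and $g'\circ g$, and then estimate the two pieces $D_B(f'\circ f, f'\circ g)$ and $D_B(f'\circ g, g'\circ g)$ separately.

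First I would invoke \Cref{prop: triangle-inequality}, which applies because $X$ is normal and all three maps $f'\circ f$, $f'\circ g$, $g'\circ g$ share the domain $X$ and codomain $Z$; this gives
$$D_B(f'\circ f, g'\circ g)\leq D_B(f'\circ f, f'\circ g)+D_B(f'\circ g, g'\circ g).$$
Next I would bound each summand using \Cref{prop:composition-inequalities}. For the first term, post-composition with the fixed map $f'$ and part $(1)$ of \Cref{prop:composition-inequalities} yields $D_B(f'\circ f, f'\circ g)\leq D_B(f,g)$. For the second term, the two maps differ by pre-composition: both are of the form $(\,\cdot\,)\circ g$, namely $f'\circ g$ and $g'\circ g$, so part $(2)$ of \Cref{prop:composition-inequalities} gives $D_B(f'\circ g, g'\circ g)\leq D_B(f',g')$. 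Combining these three inequalities produces the claimed bound $D_B(f'\circ f, g'\circ g)\leq D_B(f,g)+D_B(f',g')$.

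I do not expect a serious obstacle here, since the argument is a clean concatenation of the triangle inequality with the two monotonicity-under-composition inequalities, both of which are already available. The only point requiring mild care is the choice of intermediate map: one must pick the bridge so that one leg varies only by post-composition and the other only by pre-composition, matching parts $(1)$ and $(2)$ respectively. Using $f'\circ g$ accomplishes exactly this; the symmetric choice $g'\circ f$ works equally well. The normality hypothesis on $X$ is used solely through \Cref{prop: triangle-inequality} (which in turn relies on \cite[Lemma 4.3]{oprea2011mixing}), and no additional point-set assumptions are needed. Alternatively, one could give a direct proof by intersecting the open covers realizing $D_B(f,g)$ and $D_B(f',g')$ and applying the same refinement lemma, but routing through the already-proven triangle inequality keeps the argument short.
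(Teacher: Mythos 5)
Your proof is correct and follows essentially the same idea as the paper's: both introduce a hybrid intermediate map (you use $f'\circ g$, the paper uses $g'\circ f$) and combine the covers of $X$ coming from $D_B(f,g)$ and from pulling back the cover of $Y$ along the relevant map, with the normality of $X$ entering only through the mixing lemma. The sole difference is one of packaging: you route the cover-combination step through the already-proven \Cref{prop: triangle-inequality} together with both parts of \Cref{prop:composition-inequalities}, whereas the paper re-runs the open-cover construction and applies \cite[Lemma 4.3]{oprea2011mixing} directly; both are valid and yield the same bound.
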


\begin{proof}
Suppose that \( D_B(f, g) = m \) and \( D_B(f', g') = n \). Consider an open cover \( \{U_i\}_{i=0}^m \) of \( X \) and an open cover \( \{V_j\}_{j=0}^n \) of \( Y \), such that \( f|_{U_i} \simeq_B g|_{U_i} \) for all \( i \in \{0, \dots, m\} \), and \( f'|_{V_j} \simeq_B g'|_{V_j} \) for all \( j \in \{0, \dots, n\} \).
Clearly, for each \( i \in \{0, \dots, m\} \), we have \( (g' \circ f)|_{U_i} \simeq_B (g' \circ g)|_{U_i} \), and for each \( j \in \{0, \dots, n\} \), we have \( (f' \circ f)|_{V'_j} \simeq_B (g' \circ f)|_{V'_j} \), where \( V'_j := f^{-1}(V_j) \), and \( \{V'_j\}_{j=0}^n \) is also an open cover of \( X \).
By applying \cite[Lemma 4.3]{oprea2011mixing} again, we obtain a new open cover \( \{W_k\}_{k=0}^{m+n} \) of \( X \), such that \( (f' \circ f)|_{W_k} \simeq_B (g' \circ f)|_{W_k} \simeq_B (g' \circ g)|_{W_k} \) for each \( k \in \{0, \dots, m+n\} \).
\end{proof}

We also examine the behavior of parametrized homotopic distance under the product of fibrewise maps. Observe that given fibrewise maps \( f, g: X \to Y \) and \( h: X' \to Y' \), we can form the natural products \( f \times_B h, g \times_B h: X \times_B X' \to Y \times_B Y' \). It is straightforward to verify the inequality
$$ D_B(f \times_B h, g \times_B h) \leq D_B(f, g) $$
since \( f|_U \simeq_B g|_U \) implies \( (f \times_B h)|_{(U \times X') \cap (X \times_B X')} \simeq_B (g \times_B h)|_{(U \times X') \cap (X \times_B X')} \) for any subset \( U \subset X \). By symmetry, it is evident that we also have
$$ D_B(h \times_B f, h \times_B g) \leq D_B(f, g). $$

With additional conditions, equality can also be achieved. Specifically, suppose that \( X' \) is pointed. In this case, we can consider the fibrewise map \( i_1: X \to X \times_B X' \), defined by \( i_1(x) = (x, (s_{X'} \circ p_{X})(x)) \), and let \( pr_1: Y \times_B Y' \to Y \) be the projection onto the first factor. It is evident that
$ f = pr_1 \circ (f \times_B h) \circ i_1 \quad \text{and} \quad g = pr_1 \circ (g \times_B h) \circ i_1. $
Thus, by applying Proposition \ref{prop:composition-inequalities}, we obtain
\begin{equation}\label{eq:fibrewise-prod}
D_B(f, g) = D_B(pr_1 \circ (f \times_B h) \circ i_1, pr_1 \circ (g \times_B h) \circ i_1) \leq D_B(f \times_B h, g \times_B h).    
\end{equation}

Now we are prepared for our next result:

\begin{proposition}\label{prop:prod-inequality}
Consider the fibrewise maps $f, g: X \to Y$ and $f', g': X' \to Y'$. If the space $X \times_B X'$ is normal, then 
$D_B(f \times_B f', g \times_B g') \leq D_B(f, g) + D_B(f', g').$
\end{proposition}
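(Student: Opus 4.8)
The plan is to derive the additive product bound from the triangle inequality (\Cref{prop: triangle-inequality}), which is available precisely because $X \times_B X'$ is assumed normal, combined with the two single-factor product inequalities recorded immediately before the statement.

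First I would introduce the mixed fibrewise map $g \times_B f' : X \times_B X' \to Y \times_B Y'$, sending $(x,x') \mapsto (g(x), f'(x'))$, which will serve as the intermediate ``vertex'' in the triangle inequality. A quick check confirms it is a genuine fibrewise map: for $(x,x') \in X \times_B X'$ one has $p_Y(g(x)) = p_X(x) = p_{X'}(x') = p_{Y'}(f'(x'))$, so the image indeed lands in $Y \times_B Y'$.

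Next, since $X \times_B X'$ is normal, \Cref{prop: triangle-inequality} applied to the three fibrewise maps $f \times_B f'$, $g \times_B f'$, and $g \times_B g'$ gives
$$D_B(f \times_B f', g \times_B g') \leq D_B(f \times_B f', g \times_B f') + D_B(g \times_B f', g \times_B g').$$
It then remains to bound each summand by a single-factor distance. In the first summand only the left factor varies (from $f$ to $g$) while $f'$ is fixed on the right, so the inequality $D_B(f \times_B h, g \times_B h) \leq D_B(f,g)$ with $h = f'$ yields $D_B(f \times_B f', g \times_B f') \leq D_B(f,g)$. In the second summand only the right factor varies (from $f'$ to $g'$) while $g$ is fixed on the left, so the symmetric inequality $D_B(h \times_B a, h \times_B b) \leq D_B(a,b)$ with $h = g$, $a = f'$, $b = g'$ yields $D_B(g \times_B f', g \times_B g') \leq D_B(f',g')$. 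Summing the two bounds produces the claimed inequality.

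I do not expect a genuine obstacle, since the argument is a formal consequence of results already established; the only point demanding care is checking that the hypotheses of the triangle inequality are met, and this is exactly where the normality of $X \times_B X'$ enters — it is the sole reason that assumption appears in the statement. An alternative, more hands-on route would build a cover of size $D_B(f,g) + D_B(f',g') + 1$ directly from product covers via the mixing lemma \cite[Lemma 4.3]{oprea2011mixing}, as in the proof of \Cref{prop: triangle-inequality}, but reusing the triangle inequality is shorter and avoids re-deriving it.
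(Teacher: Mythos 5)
Your proof is correct and follows exactly the paper's own argument: both insert the mixed map $g\times_B f'$ as the intermediate vertex, invoke the triangle inequality (justified by normality of $X\times_B X'$), and bound each summand by the single-factor inequalities $D_B(f\times_B h, g\times_B h)\leq D_B(f,g)$ and its symmetric counterpart recorded just before the statement. No gaps.
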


\begin{proof}
Applying Proposition \ref{prop: triangle-inequality} along with the preceding comments, we obtain
$$D_B(f\times_Bf',g\times_Bg')\leq D_B(f\times _Bf',g\times _Bf')+D_B(g\times _Bf',g\times _Bg')\leq D_B(f,g)+D_B(f',g').\qedhere$$
% $$\begin{array}{ll}
% D_B(f\times _Bf',g\times _Bg')  &  \leq  D_B(f\times _Bf',g\times _Bf')+D_B(g\times _Bf',g\times _Bg')\\
% & \leq D_B(f,g)+D_B(f',g').
% \end{array}$$
\end{proof}

\begin{remark}
It is worth noting that the condition that $X \times_B X'$ is normal is not too restrictive. For example, if both $X$ and $X'$ are metrizable and $B$ is Hausdorff, then $X \times_B X'$ is normal.
\end{remark}

\begin{corollary}\label{cor:prod-ineq-catB}
Let $X$ and $X'$ be fibrewise pointed spaces, both of which are metrizable, and assume that $B$ is Hausdorff. Then, $
\ct_B^*(X \times_B X') \leq \ct_B^*(X) + \ct_B^*(X').$
\end{corollary}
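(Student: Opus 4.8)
The plan is to derive this corollary as a direct application of the product inequality in \Cref{prop:prod-inequality}, specialized to the maps defining the fibrewise unpointed LS category. Recall from the remark following the definition of $\ct_B^*$ that for a fibrewise pointed space $Z$ we have the identity $\ct_B^*(Z)=D_B(id_Z,s_Z\circ p_Z)$. The strategy is therefore to exhibit $\ct_B^*(X\times_B X')$ as a parametrized homotopic distance between a product of identity maps and a product of ``collapse-to-section'' maps, and then to invoke the subadditivity of $D_B$ under fibrewise products.

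First I would record that $X\times_B X'$ is a fibrewise pointed space, with section $s_{X\times_B X'}=(s_X,s_{X'})$ and projection $p_{X\times_B X'}$, exactly as set up earlier in the text for $X\times_B X$. Then $\ct_B^*(X\times_B X')=D_B\bigl(id_{X\times_B X'},\, s_{X\times_B X'}\circ p_{X\times_B X'}\bigr)$. The key observation is that both of these maps factor as fibrewise products: one checks directly that $id_{X\times_B X'}=id_X\times_B id_{X'}$ and that $s_{X\times_B X'}\circ p_{X\times_B X'}=(s_X\circ p_X)\times_B(s_{X'}\circ p_{X'})$, where the latter equality holds because a point $(x,x')\in X\times_B X'$ satisfies $p_X(x)=p_{X'}(x')$, so both sides send it to $\bigl((s_X\circ p_X)(x),(s_{X'}\circ p_{X'})(x')\bigr)$. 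Verifying this factorization is the crux of the argument, but it is a routine identification of maps rather than a genuine obstacle.

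With these two identifications in hand, I would set $f=id_X$, $g=s_X\circ p_X$, $f'=id_{X'}$, and $g'=s_{X'}\circ p_{X'}$ in \Cref{prop:prod-inequality}. The hypothesis that $X\times_B X'$ is normal is supplied by the stated assumptions that $X$ and $X'$ are metrizable and $B$ is Hausdorff, via the remark preceding the corollary. Applying the proposition then yields
$$D_B\bigl(id_X\times_B id_{X'},\,(s_X\circ p_X)\times_B(s_{X'}\circ p_{X'})\bigr)\leq D_B(id_X,s_X\circ p_X)+D_B(id_{X'},s_{X'}\circ p_{X'}).$$
Translating the left-hand side back to $\ct_B^*(X\times_B X')$ and the right-hand summands back to $\ct_B^*(X)$ and $\ct_B^*(X')$ via the defining identity gives the desired inequality. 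I do not anticipate a serious obstacle here; the only point requiring care is confirming the equality $s_{X\times_B X'}\circ p_{X\times_B X'}=(s_X\circ p_X)\times_B(s_{X'}\circ p_{X'})$, which depends on the pullback constraint $p_X(x)=p_{X'}(x')$ that is built into the fibrewise product.
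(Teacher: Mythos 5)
Your proof is correct and follows exactly the paper's route: apply \Cref{prop:prod-inequality} with $f=id_X$, $g=s_X\circ p_X$, $f'=id_{X'}$, $g'=s_{X'}\circ p_{X'}$, using the metrizability hypotheses to guarantee normality of $X\times_B X'$. Your explicit verification that $s_{X\times_B X'}\circ p_{X\times_B X'}=(s_X\circ p_X)\times_B(s_{X'}\circ p_{X'})$ is just a spelled-out version of the paper's remark that $B\times_B B=B$.
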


\begin{proof}
Consider $f = id_X$, $f' = id_{X'}$, $g = s_X \circ p_X$, and $g' = s_{X'} \circ p_{X'}$, and apply \Cref{prop:prod-inequality}. Note that $B \times_B B = B$.
\end{proof}

As an application of \Cref{prop:prod-inequality}, we generalize \cite[Corollary 6.2]{C-F-W}.

\begin{corollary}\label{cor:tcB-prod-inequality}
Let $X$ and $X'$ be fibrewise spaces such that both $X$ and $X'$ are metrizable, and assume that $B$ is Hausdorff. Then,
$\TC_B(X \times_B X') \leq \TC_B(X) + \TC_B(X').
$
\end{corollary}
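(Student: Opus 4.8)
The plan is to reduce the statement about fibrewise topological complexity to the product inequality for parametrized homotopic distance already established in \Cref{prop:prod-inequality}. By \Cref{cor:parametrizedTC-as-phd}, the fibrewise topological complexity of any fibrewise space $Z$ is precisely the parametrized homotopic distance $D_B(pr_1, pr_2)$ between the two projections $pr_1, pr_2 : Z \times_B Z \to Z$. So the strategy is to express $\TC_B(X \times_B X')$, $\TC_B(X)$ and $\TC_B(X')$ all in this projection form and then recognize the left-hand side as a $D_B$ of a product of fibrewise maps.

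First I would write $\TC_B(X) = D_B(pr_1^X, pr_2^X)$ where $pr_i^X : X \times_B X \to X$ are the projections, and similarly $\TC_B(X') = D_B(pr_1^{X'}, pr_2^{X'})$. For the product, $\TC_B(X \times_B X') = D_B(pr_1, pr_2)$ with $pr_1, pr_2 : (X \times_B X') \times_B (X \times_B X') \to X \times_B X'$. The key algebraic observation is that, after the canonical rearrangement identifying $(X \times_B X') \times_B (X \times_B X')$ with $(X \times_B X) \times_B (X' \times_B X')$, the projection $pr_1$ corresponds to $pr_1^X \times_B pr_1^{X'}$ and $pr_2$ corresponds to $pr_2^X \times_B pr_2^{X'}$. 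Thus $\TC_B(X \times_B X') = D_B(pr_1^X \times_B pr_1^{X'}, pr_2^X \times_B pr_2^{X'})$.

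Next I would apply \Cref{prop:prod-inequality} with $f = pr_1^X$, $g = pr_2^X$, $f' = pr_1^{X'}$ and $g' = pr_2^{X'}$. The hypothesis of that proposition requires the domain $(X \times_B X) \times_B (X' \times_B X')$ to be normal; this is where the metrizability of $X$, $X'$ and the Hausdorffness of $B$ come in, via the remark preceding \Cref{cor:prod-ineq-catB} (iterated, since a finite fibrewise product of metrizable spaces over a Hausdorff base remains metrizable, hence normal). The proposition then yields
$$
D_B(pr_1^X \times_B pr_1^{X'}, pr_2^X \times_B pr_2^{X'}) \leq D_B(pr_1^X, pr_2^X) + D_B(pr_1^{X'}, pr_2^{X'}),
$$
which is exactly $\TC_B(X \times_B X') \leq \TC_B(X) + \TC_B(X')$.

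The main obstacle I anticipate is the bookkeeping in the second step: verifying cleanly that the swap homeomorphism $(X \times_B X') \times_B (X \times_B X') \cong (X \times_B X) \times_B (X' \times_B X')$ is a fibrewise homeomorphism intertwining the two pairs of projections with the product maps, so that the $D_B$ values genuinely coincide (this uses the fibrewise homotopy invariance from \Cref{invar}). I would also need to confirm that the normality hypothesis transfers correctly under this identification, which is immediate since homeomorphic spaces are simultaneously normal. Everything else is a direct substitution into \Cref{prop:prod-inequality}.
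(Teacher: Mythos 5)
Your proposal is correct and follows essentially the same route as the paper: apply \Cref{prop:prod-inequality} with $f=pr_1$, $g=pr_2$ on $X\times_B X$ and $f'=pr_1$, $g'=pr_2$ on $X'\times_B X'$, and use the canonical fibrewise homeomorphism $(X \times_B X') \times_B (X \times_B X') \cong (X \times_B X) \times_B (X' \times_B X')$ intertwining the projections with the product maps. Your extra care about normality of the iterated fibrewise product and the invariance under the swap homeomorphism is a welcome addition that the paper leaves implicit.
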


\begin{proof}
Consider $f = pr_1: X \times_B X \to X$, $f' = pr_1: X' \times_B X' \to X'$, $g = pr_2: X \times_B X \to X$, and $g' = pr_2: X' \times_B X' \to X'$. We now apply \Cref{prop:prod-inequality}. 
Note that there is an obvious fibrewise homeomorphism that makes the following diagram commute:
\[
\xymatrix{
{(X \times_B X') \times_B (X \times_B X')} \ar[rr]^{\cong} \ar[dr]_{pr_{\varepsilon}} & & {(X \times_B X) \times_B (X' \times_B X')} \ar[dl]^{pr_{\varepsilon} \times_B pr_{\varepsilon}} \\
 & {X \times_B X'} & 
}
\]
for all $\varepsilon \in \{1,2\}$.
\end{proof}

\section{Fibrewise $H$-spaces}\label{sec: fibrewise-H-spaces}
In this section, we study fibrewise $H$-spaces and their relation to fibrewise homotopic distance. A \emph{fibrewise $H$-space} is a fibrewise pointed space $X$ equipped with a fibrewise pointed map
$\mu : X \times_B X \to X, \hspace{5pt} (x, y) \mapsto \mu(x, y) = x \cdot y,$
called \emph{fibrewise multiplication}, such that $\mu \circ i_1\simeq ^B_B id_X$ and $\mu \circ i_2\simeq ^B_Bid_X$,
where $i_1$ and $i_2$ denote the natural inclusions of $X$ into $X \times_B X$; that is, $i_1 = (id_X, s_X \circ p_X)$ and $i_2 = (s_X \circ p_X, id_X)$. The fibrewise space $X$ is \emph{homotopy associative} if, in addition, we have a fibrewise pointed homotopy $\mu \circ (\mu \times _Bid_X)\simeq ^B_B \mu \circ (id_X\times _B\mu ).$ 

A \emph{fibrewise homotopy right inverse} for a fibrewise multiplication $ \mu $ on $ X $ is a fibrewise pointed map $ u: X \to X $ such that $ \mu \circ (id_X, u) $ is fibrewise pointed nullhomotopic, meaning it is fibrewise pointed homotopic to $ s_X \circ p_X $. A similar definition holds for fibrewise homotopy left inverses.

When $ \mu $ is fibrewise homotopy associative, a fibrewise pointed map is a homotopy right inverse if and only if it is also a fibrewise homotopy left inverse. In this case, we simply refer to it as a fibrewise homotopy inverse. The standard notation for this is $ u(x) = x^{-1} $, for all $ x \in X $.

A \emph{fibrewise group-like space} is a homotopy associative fibrewise $H$-space $ X $ where the fibrewise multiplication admits a fibrewise homotopy inverse.

For further details on fibrewise $H$-spaces, see \cite{C-J}.

\begin{remark}\label{group-like}
Using reasoning similar to the classical case (see, for example, \cite[page 119]{whitehead2012elements}), it can be easily verified that if $X$ is a homotopy associative fibrewise $H$-space, then $X$ is a fibrewise group-like space if and only if the fibrewise shearing map
$$sh:X\times _BX\rightarrow X\times _BX,\hspace{5pt}(x,y)\mapsto (x,x\cdot y)$$ \noindent is a fibrewise pointed homotopy equivalence.
\end{remark}

We are particularly interested in fibrewise $H$-spaces that admit a fibrewise division:

\begin{definition}
Let $X$ be a fibrewise $H$-space. A \emph{fibrewise division} on $X$ is a fibrewise pointed map $\delta :X\times _BX\rightarrow X$ such that there is a fibrewise pointed homotopy $\mu \circ (pr_1,\delta )\simeq ^B_B pr_2,$ \noindent where $pr_1,pr_2:X\times _BX\rightarrow X$ are the respective projection maps.
\end{definition}

We now examine a related condition for the existence of a fibrewise division map.

\begin{lemma}\label{division}
Let $X$ be a fibrewise $H$-space. Then $X$ admits a fibrewise division if and only if the fibrewise shearing map
$sh=(pr_1,\mu )$ has a fibrewise pointed homotopy right inverse.
\end{lemma}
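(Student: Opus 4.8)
The plan is to prove both implications by direct construction, exploiting the fact that the first component of the shearing map $sh=(pr_1,\mu)$ is simply the projection $pr_1$, which is left untouched by the shearing operation. Consequently, inverting $sh$ up to fibrewise pointed homotopy amounts to solving for the second coordinate, which is precisely the data of a fibrewise division.

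For the forward direction, I would suppose $\delta:X\times_B X\to X$ is a fibrewise division, so that $\mu\circ(pr_1,\delta)\simeq^B_B pr_2$, and define the fibrewise pointed map $\sigma:=(pr_1,\delta):X\times_B X\to X\times_B X$. Then $sh\circ\sigma=(pr_1,\mu\circ(pr_1,\delta))$, and pairing the constant homotopy at $pr_1$ on the first factor with the division homotopy on the second factor yields a fibrewise pointed homotopy $sh\circ\sigma\simeq^B_B(pr_1,pr_2)=id_{X\times_B X}$. The only verification needed here is that this paired homotopy is again fibrewise pointed, which holds since $pr_1$ preserves the section and the division homotopy is pointed by hypothesis. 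Thus $\sigma$ is a fibrewise pointed homotopy right inverse of $sh$.

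For the converse, let $\sigma:X\times_B X\to X\times_B X$ be a fibrewise pointed homotopy right inverse of $sh$, and write $\sigma=(\sigma_1,\sigma_2)$ with $\sigma_i=pr_i\circ\sigma$. Since $sh\circ\sigma=(\sigma_1,\mu\circ(\sigma_1,\sigma_2))$ and $sh\circ\sigma\simeq^B_B id_{X\times_B X}$, composing this homotopy with the two projections $pr_1,pr_2$ produces two fibrewise pointed homotopies, namely $\sigma_1\simeq^B_B pr_1$ and $\mu\circ(\sigma_1,\sigma_2)\simeq^B_B pr_2$. I would then set $\delta:=\sigma_2=pr_2\circ\sigma$ and check it is a fibrewise division: using $\sigma_1\simeq^B_B pr_1$ we get $(\sigma_1,\delta)\simeq^B_B(pr_1,\delta)$, and post-composing with $\mu$ gives
$$\mu\circ(pr_1,\delta)\simeq^B_B\mu\circ(\sigma_1,\sigma_2)\simeq^B_B pr_2,$$
which is exactly the defining condition for $\delta$ to be a fibrewise division.

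The step requiring the most care is the decomposition and recombination of fibrewise pointed homotopies along the components of the fibrewise product $X\times_B X$. One must ensure that each component homotopy obtained by projecting remains fibrewise and pointed (which is automatic, as the projections $pr_1,pr_2$ and the multiplication $\mu$ are all fibrewise pointed maps), and that the substitution of $\sigma_1$ by the homotopic $pr_1$ inside $\mu\circ(\sigma_1,\sigma_2)$ is legitimate. This last point is where the argument really rests: it uses that fibrewise pointed homotopy is stable under pairing of maps into $X\times_B X$ and under post-composition with the fibrewise pointed map $\mu$, so that homotopic inputs to $\mu\circ(-,\delta)$ yield fibrewise pointed homotopic outputs.
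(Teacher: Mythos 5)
Your proof is correct and follows essentially the same route as the paper's: in both directions you take the candidate inverse to be $(pr_1,\delta)$ (resp.\ extract $\delta$ as the second component of the given right inverse), and the converse hinges on the same component-wise separation of the homotopy $sh\circ\sigma\simeq^B_B id$ followed by substituting $pr_1$ for $\sigma_1$ inside $\mu\circ(-,\delta)$. The extra care you take in justifying the pairing and projection of fibrewise pointed homotopies is implicit in the paper's argument but adds nothing different in substance.
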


\begin{proof}
Suppose $\delta :X\times _BX\rightarrow X$ a fibrewise division, and define $\psi :=(pr_1,\delta )$. Then we have
$sh\circ \psi =(pr_1,\mu )\circ (pr_1,\delta )=(pr_1, \mu \circ (pr_1, \delta ))\simeq ^B_B (pr_1,pr_2)=id.$

Conversely, let $\psi: X \times_B X \to X \times_B X$ be a fibrewise pointed homotopy right inverse of $sh$, and consider its components $\psi = (\psi_1, \psi_2)$, where, $\psi_1, \psi_2: X \times_B X \to X$. Define $\delta := \psi_2$, which will act as a division map. Indeed, observe the following:
$$(pr_1, pr_2) = id\simeq^B_B sh \circ \psi = (pr_1, \mu) \circ (\psi_1, \psi_2)= (pr_1, \mu) \circ (\psi_1, \delta)= (\psi_1, \mu \circ (\psi_1, \delta)).$$
Thus, by separating components, we obtain $pr_1 \simeq^B_B \psi_1$ and $pr_2\simeq^B_B \mu \circ (\psi_1, \delta)\simeq^B_B \mu \circ (pr_1, \delta).$
\end{proof}

A notable example of $H$-spaces that admit a fibrewise division is provided by fibrewise group-like spaces.

\begin{example}
If $X$ is a fibrewise group-like space, then by Remark \ref{group-like} the fibrewise shearing map is a fibrewise pointed homotopy equivalence. As a result, by Lemma \ref{division}, $X$ admits a fibrewise division map $\delta: X \times_B X \to X$ given by $\delta(x, y) = x^{-1} \cdot y$.
\end{example}

However, a fibrewise $H$-space does not necessarily need to be group-like in order to admit a fibrewise division map. In the following, we explore sufficient conditions for the existence of such a map. To establish these conditions, we first present a preliminary lemma, which will play a crucial role in our argument. The proof of this lemma can be found in \cite{dold1963partitions}.

Recall that a space $B$ is \emph{numerably categorical} if it has a numerable cover $\mathcal{U} = \{V_j\}_{j \in J}$ such that each inclusion map $V_j \hookrightarrow B$ is nullhomotopic for all $j \in J$. This class of spaces is quite broad. For instance, CW-complexes, locally contractible paracompact spaces (such as topological manifolds), and classifying spaces $B_G$ all belong to this class. Moreover, by a \emph{weak fibration} (or Dold fibration) we will mean a map $p: E \rightarrow B$ that is fibrewise homotopy equivalent to a Hurewicz fibration (as fibrewise spaces over $B$). Equivalently, $p$ is a weak fibration if it satisfies the weak covering homotopy property (WCHP) for any space $X$, as described in \cite[Definition 5.1]{dold1963partitions}. 
A fibrewise space $X$ is said to be \emph{weakly fibrant} if the projection map $p_X: X \rightarrow B$ is a weak fibration. 

With these definitions in place, we now turn to the aforementioned lemma, which will provide the necessary framework for understanding the sufficient conditions for the existence of a fibrewise division map.

\begin{lemma}\cite[Theorem 6.3]{dold1963partitions}\label{dold}
Let $f:X\rightarrow Y$ be a fibrewise map over a numerably categorical space $B$, where both $X$ and $Y$ are weakly fibrant. The following statements are equivalent:
\begin{enumerate}
\item $f$ is a fibrewise homotopy equivalence.

\item The restriction of $f$ to every fibre, denoted by $f_b:p_X^{-1}(\{b\})\rightarrow p_Y^{-1}(\{b\})$ for each $b\in B,$ is an ordinary homotopy equivalence.
\end{enumerate}
\end{lemma}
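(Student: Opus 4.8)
The statement is Dold's classical theorem, so the plan is to reproduce the structure of his argument, adapted to the weakly fibrant setting. The implication $(1)\Rightarrow(2)$ is immediate: if $g:Y\to X$ is a fibrewise homotopy inverse of $f$ with fibrewise homotopies $g\circ f\simeq_B id_X$ and $f\circ g\simeq_B id_Y$, then every fibrewise homotopy keeps each point inside its own fibre, so restricting to $p_X^{-1}(\{b\})$ and $p_Y^{-1}(\{b\})$ yields ordinary homotopies $g_b\circ f_b\simeq id$ and $f_b\circ g_b\simeq id$; hence each $f_b$ is an ordinary homotopy equivalence. All the work is in $(2)\Rightarrow(1)$.

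First I would reduce to the case where $p_X$ and $p_Y$ are Hurewicz fibrations. Since $X$ and $Y$ are weakly fibrant, there are fibrewise homotopy equivalences $X\simeq_B \bar X$ and $Y\simeq_B\bar Y$ onto Hurewicz fibrations $\bar X,\bar Y$ over $B$. Conjugating $f$ by these equivalences produces a fibrewise map $\bar f:\bar X\to\bar Y$ which is a fibrewise homotopy equivalence if and only if $f$ is, and whose fibre restrictions are homotopy equivalences if and only if those of $f$ are (using the already-proved easy direction together with two-out-of-three on each fibre). Thus, without loss of generality, $X$ and $Y$ are Hurewicz fibrations.

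Next comes the local step. Using that $B$ is numerably categorical, fix a numerable cover $\mathcal U=\{V_j\}_{j\in J}$ with each inclusion $V_j\hookrightarrow B$ nullhomotopic. By homotopy invariance of Hurewicz fibrations, a fibration whose structure is pulled back along a nullhomotopic inclusion is fibre homotopy trivial; hence over each $V_j$ there are fibrewise homotopy equivalences $p_X^{-1}(V_j)\simeq_{V_j} V_j\times F_j$ and $p_Y^{-1}(V_j)\simeq_{V_j} V_j\times F_j'$. Under these trivializations, and again by contracting $V_j$, the restriction $f|_{p_X^{-1}(V_j)}$ is fibrewise homotopic over $V_j$ to $id_{V_j}\times f_{b_j}$ for a chosen point $b_j\in V_j$. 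Because $f_{b_j}:F_j\to F_j'$ is an ordinary homotopy equivalence by hypothesis, $id_{V_j}\times f_{b_j}$ is a fibrewise homotopy equivalence over $V_j$, and therefore so is $f|_{p_X^{-1}(V_j)}$.

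Finally I would glue. The remaining and genuinely hard step is Dold's local-to-global (union) principle: a fibrewise map between Hurewicz fibrations that restricts to a fibrewise homotopy equivalence over each member of a numerable cover of the base is a fibrewise homotopy equivalence over the whole base. This is exactly where the partition of unity subordinate to $\mathcal U$ is used: the local fibrewise homotopy inverses and the local connecting homotopies do not agree on overlaps, so one must blend them using the homotopy lifting property of the fibrations, performing a transfinite patching indexed by the partition of unity and controlling the process through its numerability. Granting this union theorem, the conclusion that $f$ is a fibrewise homotopy equivalence follows at once, and the main obstacle is precisely the careful assembly of the global homotopy inverse in this gluing step.
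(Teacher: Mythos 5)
The paper does not prove this lemma at all: it is quoted verbatim as \cite[Theorem 6.3]{dold1963partitions} and used as a black box, so there is no internal argument to compare yours against. What you have written is a faithful reconstruction of Dold's own proof strategy: the easy implication by restricting fibrewise homotopies to fibres, the reduction from weak fibrations to Hurewicz fibrations by conjugating with the defining fibrewise homotopy equivalences (together with two-out-of-three on fibres), local fibre-homotopy triviality over the numerably categorical cover, and finally Dold's local-to-global ``union'' theorem (his Theorem 6.1). Two caveats are worth flagging. First, the gluing step is the entire mathematical content of the theorem, and you explicitly grant it rather than prove it; that is acceptable here only because the lemma is itself a citation, but a self-contained proof would have to reproduce Dold's partition-of-unity patching in detail. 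Second, in the local step you say the deformation of $f|_{p_X^{-1}(V_j)}$ to $id_{V_j}\times f_{b_j}$ is obtained ``by contracting $V_j$'': note that $V_j$ is only nullhomotopic \emph{in} $B$, not contractible in itself, so this deformation is not a naive reparametrization inside the product; one must lift the nullhomotopy $V_j\times I\to B$ through the target fibration (this is also how the trivializations themselves are produced). With that correction the local step goes through, and your outline matches the cited source.
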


\begin{remark}\label{virem}
In fact, as demonstrated by A. Dold, the result remains valid even if statement (2) is replaced with the condition that $f_b$ is a homotopy equivalence for at least one point $b \in B$ in each path component of $B$. Specifically, if $B$ is path-connected, the result holds as long as there exists a single point $b \in B$ such that $f_b$ is a homotopy equivalence.
\end{remark}

Now, we are ready to state and prove our result:

\begin{proposition}\label{prop: existence-division-map}
Let $X$ be a fibrewise well-pointed $H$-space over a path-connected, numerably categorical space $B$. Suppose that $X$ is weakly fibrant, and there exists a point $b \in B$ such that the fibre $X_b=p_X^{-1}(\{b\})$ is a connected CW-complex. Then, the fibrewise shearing map $sh: X \times_B X \rightarrow X \times_BX$ is a fibrewise pointed homotopy equivalence. Consequently, $X$ admits a fibrewise division map $\delta: X \times_B X \rightarrow X$.
\end{proposition}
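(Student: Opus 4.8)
The plan is to verify condition (2) of Lemma \ref{dold} (in the sharpened form of Remark \ref{virem}) for the fibrewise shearing map $sh$, thereby concluding that it is a fibrewise homotopy equivalence, and then invoke Lemma \ref{division} together with Proposition \ref{important1}(iii) to upgrade this to a fibrewise pointed homotopy equivalence and extract the division map. First I would observe that $X \times_B X$ is weakly fibrant: its projection is the pullback of the weak fibration $p_X$ along itself (or the composite of two such), and the class of weak fibrations is stable under pullback, so $X \times_B X \to B$ is again a weak fibration. Hence $sh: X \times_B X \to X \times_B X$ is a fibrewise map between weakly fibrant spaces over the numerably categorical, path-connected base $B$, and Remark \ref{virem} applies: it suffices to check that the restriction $sh_b$ to a single fibre over one point is an ordinary homotopy equivalence.

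Next I would identify the fibre of $sh$ over the distinguished point $b$. Since $(X \times_B X)_b = X_b \times X_b$, the restriction $sh_b: X_b \times X_b \to X_b \times X_b$ is precisely the shearing map $(x,y) \mapsto (x, x \cdot y)$ of the fibre $X_b$, where the multiplication on $X_b$ is induced by $\mu$ restricted to the fibre. The hypotheses now guarantee that $X_b$ is a connected CW-complex which, carrying the restricted multiplication, is an ordinary (pointed) $H$-space. The key classical input is that a connected $H$-space which has the homotopy type of a CW-complex is group-like — every such $H$-space admits a homotopy inverse (this follows from the fact that the shearing map induces an isomorphism on homotopy groups, hence is a homotopy equivalence by Whitehead's theorem, since for a connected CW $H$-space $\pi_0$ is a group). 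By the classical analogue of Remark \ref{group-like}, the shearing map $sh_b$ of a group-like $H$-space is therefore a homotopy equivalence.

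With $sh_b$ an ordinary homotopy equivalence and $B$ path-connected, Remark \ref{virem} yields that $sh$ is a fibrewise homotopy equivalence. Since $X$, and hence $X \times_B X$, is fibrewise well-pointed and $sh$ is a fibrewise pointed map, Proposition \ref{important1}(iii) promotes this to a fibrewise \emph{pointed} homotopy equivalence. In particular $sh$ admits a fibrewise pointed homotopy right inverse, so Lemma \ref{division} provides a fibrewise division map $\delta: X \times_B X \to X$, completing the argument.

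I expect the main obstacle to be the fibrewise well-pointedness bookkeeping required to apply Proposition \ref{important1}(iii): one must ensure that $X \times_B X$ lies in $\mathbf{Top}_w(B)$, which is not automatic since $\mathbf{Top}_w(B)$ fails to be closed under arbitrary finite limits. This should follow from Proposition \ref{important2} because the fibrewise product $X \times_B X$ is the pullback of the fibrewise fibration $p_X$ (every projection $p_X: X \to B$ is a fibrewise fibration) against itself, and $X$ being weakly fibrant lets one arrange the section to be a closed fibrewise cofibration; verifying this stability, rather than the homotopy-theoretic core, is the delicate point.
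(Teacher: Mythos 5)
Your proposal is correct and follows essentially the same route as the paper: restrict the shearing map to the fibre $X_b$, use that a connected CW $H$-space has a homotopy-equivalence shearing map, apply Dold's theorem (Lemma \ref{dold}/Remark \ref{virem}) to get a fibrewise homotopy equivalence, and then upgrade to a fibrewise pointed homotopy equivalence via Propositions \ref{important2} and \ref{important1}(iii) before invoking Lemma \ref{division}. The well-pointedness of $X \times_B X$ that you flag as the delicate point is handled in the paper exactly as you anticipate, by Proposition \ref{important2} applied to the pullback of the fibrewise fibration $p_X$ along itself.
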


\begin{proof}
Since $X$ is a fibrewise $H$-space, then the restriction of the multiplication map $\mu$ to each fibre, $\mu_b : X_b \times X_b \to X_b$, equips the fibre $X_b$ with the structure of an ordinary $H$-space. Consequently, as $X_b$ is a connected CW $H$-space, the ordinary shearing map, $sh_b : X_b \times X_b \to X_b \times X_b$ (specifically, the restriction of $sh$ to the fibres),
is a (pointed) homotopy equivalence. Since $X \times_B X$ is weakly fibrant, Lemma \ref{dold} (and/or Remark \ref{virem}) implies that the fibrewise shearing map $sh : X \times_B X \to X \times_B X$
is a fibrewise homotopy equivalence. Furthermore, since $X \times_B X$ is fibrewise well-pointed by Proposition \ref{important2}, and since $p_X:X\rightarrow B$ is always a fibrewise fibration, it follows from Proposition \ref{important1} (iii) that $sh$ is a fibrewise pointed homotopy equivalence.
\end{proof}

\begin{remark}
If $B$ is not path-connected, the result still holds by requiring that for each path component of $B$, there exists a point $b \in B$ such that the fibre $X_b$ is a connected CW-complex.
\end{remark}

As an application of \Cref{prop: existence-division-map}, we now present an example of a fibrewise $H$-space that admits a division map but is not a fibrewise group-like space.

For a compact Lie group $G$, Cook and Crabb construct fibrewise Hopf structures on sphere bundles using principal $G$-bundles and $G$-equivariant Hopf structures on spheres in \cite{cook1993fibrewise}. To construct our desired example, we briefly recall their general construction: Let $B$ be a finite CW-complex, and let $q: P \to B$ be a principal $G$-bundle over $B$. Consider an odd-dimensional vector space $V$ on which $G$ acts orthogonally. This action induces a vector bundle $\xi := P \times_G V$ over $B$. Now, consider the vector bundle $\mathbb{R} \oplus \xi$ over $B$, and the corresponding sphere bundle, denoted by $S(\mathbb{R} \oplus \xi) = P \times_G S(\mathbb{R} \oplus V)$. This construction provides a fibrewise Hopf structure on $S(\mathbb{R} \oplus \xi)$, extending the equivariant Hopf structure of the fibre $S(\mathbb{R} \oplus V)$.
Now, we are ready to describe our desired example.

\begin{example}\label{ex:fibHspace} Let $G_2$ denote the automorphism group of the octonions, which is a subgroup of the orthogonal group $O(7)$. As shown in \cite[Proposition 2.1]{cook1993fibrewise}, the Hopf structure on $S(\mathbb{R} \oplus \mathbb{R}^7) = S^7$ given by Cayley multiplication is $G_2$-equivariant. Moreover, it is well known that this Hopf structure is non-homotopy associative and admits a division map.
Note that $G_2$ acts orthogonally on $\mathbb{R}^7$. Suppose $P \to B$ is a principal $G_2$-bundle as in the previous general discussion, and $\xi = P \times_{G_2} \mathbb{R}^7$. Then, we have a fibrewise Hopf structure on the sphere bundle $S(\mathbb{R} \oplus \xi)$, which extends the $G_2$-equivariant Hopf structure of $S^7$. This fibrewise Hopf structure is clearly not fibrewise homotopy associative. On the other hand, it follows from \Cref{prop: existence-division-map} that the fibrewise $H$-space $S(\mathbb{R} \oplus \xi)$ admits a fibrewise division map. Thus, this provides an example of a fibrewise $H$-space with a division map that is not fibrewise group-like.

As an additional noteworthy observation, estimating the parametrized topological complexity for this particular class of fibrewise $H$-spaces is relatively straightforward. Specifically, by applying the homotopy dimension-connectivity upper bound from \Cref{thm:lower-upper-bounds} (or alternatively using \cite[Proposition 7.2]{C-F-W}, since $S(\mathbb{R} \oplus \xi)$ is fibrant), we obtain: 
$
\TC_B(S(\mathbb{R} \oplus \xi)) \leq 2 + \frac{\dim(B)}{7}.
$

Moreover, since the fibre of $S(\mathbb{R} \oplus \xi)$ is $S^7$, which is not contractible, the following inequality can be established using \cite[Proposition 4.5]{C-F-W}:  
$$
1 \leq \TC_B(S(\mathbb{R} \oplus \xi))  \leq 2 + \frac{\dim(B)}{7}.
$$  
In particular, if $\dim(B) \leq 6$, then $\TC_B(S(\mathbb{R} \oplus \xi))$ is either $1$ or $2$.

\end{example}

We now present an interesting result that connects the parametrized homotopic distance to the fibrewise unpointed Lusternik-Schnirelman (LS) category, specifically when considering fibrewise $H$-spaces with a fibrewise division.

\begin{theorem}\label{thm:phd-leq-pcat}
Let $X$ be a fibrewise $H$-space with a fibrewise division map, and let $f,g:X\times_B X\rightarrow X$ be fibrewise pointed maps. Then, we have the inequality $D_B(f,g)\leq \ct_B^*(X).$
\end{theorem}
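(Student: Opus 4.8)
The plan is to reduce the statement to the composition inequality in \Cref{prop:phd-composition-ineq}, exploiting the hypothesis that $X$ carries a fibrewise division map $\delta : X\times_B X\to X$. The key observation is that the target $X$ is a fibrewise $H$-space, so the division map furnishes a fibrewise pointed map that ``absorbs'' the diagonal direction. Concretely, for fibrewise pointed maps $f,g:X\times_B X\to X$, I would look at the fibrewise map $(f,g):X\times_B X\to X\times_B X$ and then postcompose with $\delta$, exploiting the defining homotopy $\mu\circ(pr_1,\delta)\simeq^B_B pr_2$ to rewrite $g$ (or the comparison of $f$ and $g$) in terms of $\delta\circ(f,g)$ and the section.

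First I would make precise how $\delta$ relates $f$ and $g$. Set $\varphi := \delta\circ(f,g):X\times_B X\to X$, a fibrewise pointed map. The goal is to show that $f$ and $g$ become fibrewise homotopic precisely on those open sets where $\varphi$ is fibrewise homotopic to the constant section $s_X\circ p_{X\times_B X}$, i.e. on fibrewise categorical subsets of $X\times_B X$. More carefully, on any open set $U\subset X\times_B X$ where $\varphi|_U\simeq_B (s_X\circ p)|_U$, I would like to recover $g|_U\simeq_B f|_U$. The mechanism is: apply the fibrewise multiplication to the pair $(f,\varphi)$ and use the division identity $\mu\circ(pr_1,\delta)\simeq^B_B pr_2$ (pulled back along $(f,g)$) to get $\mu\circ(f,\delta\circ(f,g))\simeq^B_B g$; then, where $\varphi$ is nullhomotopic, $\mu\circ(f,\varphi)\simeq_B \mu\circ(f,s_X\circ p)=\mu\circ i_1\circ f\simeq_B f$ using the $H$-space unit identity $\mu\circ i_1\simeq^B_B id_X$. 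Chaining these gives $g|_U\simeq_B f|_U$. This shows $D_B(f,g)\leq \ct_B^*(\varphi)\leq \ct_B^*(X\times_B X)$, but that is weaker than what we want, so the bound must instead be pushed down to $\ct_B^*(X)$.

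To land on $\ct_B^*(X)$ rather than $\ct_B^*(X\times_B X)$, I expect the right move is to factor $\varphi$ through a single copy of $X$ and invoke \Cref{prop:phd-composition-ineq} with $h=(f,g)$ and a suitable $h'$, or to apply \Cref{cor: rel-catBf-catBX-catBY} to the map $\varphi$. Since $\varphi=\delta\circ(f,g)$ is a fibrewise pointed map into $X$, one has $\ct_B^*(\varphi)=D_B(\varphi, s_X\circ p)$, and by \Cref{prop:phd-composition-ineq} applied to the pair $id_X,\,s_X\circ p_X$ on $X$ precomposed with $\varphi$, the category of $\varphi$ is controlled by $\ct_B^*(X)$ through the codomain bound in \Cref{cor: rel-catBf-catBX-catBY}, which gives exactly $\ct_B^*(\varphi)\leq \ct_B^*(X)$. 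Combining $D_B(f,g)\leq \ct_B^*(\varphi)$ with $\ct_B^*(\varphi)\leq \ct_B^*(X)$ yields the claim.

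The main obstacle I anticipate is bookkeeping the two homotopies in the fibrewise \emph{pointed} category and verifying that the concatenation of (i) the division identity pulled back along $(f,g)$, (ii) the local nullhomotopy of $\varphi$, and (iii) the $H$-space unit identity is genuinely a fibrewise homotopy on each $U_i$ of a categorical cover of $X$ \emph{pulled back to} $X\times_B X$, not merely a pointwise statement. In particular I must be careful that the cover witnessing $\ct_B^*(X)$ is transported correctly so that each piece lies where $\varphi$ is fibrewise nullhomotopic; this is where the factorization $\varphi=\delta\circ(f,g)$ and the codomain inequality of \Cref{cor: rel-catBf-catBX-catBY} do the essential work, converting a cover of $X$ into the required cover of $X\times_B X$ while keeping the homotopies fibrewise and basepoint-preserving.
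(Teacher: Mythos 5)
Your proposal is correct and follows essentially the same route as the paper: the paper also forms $\phi=\delta\circ(f,g)$, pulls back a fibrewise categorical cover of $X$ through it, and on each piece chains the division identity $\mu\circ(pr_1,\delta)\simeq^B_B pr_2$ (precomposed with $(f,g)$) with the local nullhomotopy and the unit identity $\mu\circ i_1\simeq^B_B id_X$ to get $f\simeq_B g$. Your modularization through $D_B(f,g)\leq\ct_B^*(\varphi)$ followed by the codomain bound $\ct_B^*(\varphi)\leq\ct_B^*(X)$ from \Cref{cor: rel-catBf-catBX-catBY} is just a repackaging of that same pullback-of-covers argument.
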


\begin{proof}  
Let $U$ be a fibrewise categorical subset of $G$, meaning that $i|_{U} \simeq_B s_X \circ p_{X}|_U$. Consider the division map $\delta: X \times_B X \to X$, and the composite map
$\phi := \delta \circ (f, g): X \times_B X \to G,$
along with the open set $V := \phi^{-1}(U)$. We have the following strictly commutative diagram:
$$
\xymatrix{
{V} \ar[d]_{\phi|_V} \ar@{^{(}->}[rr]^{\mathrm{inc}_V} & & {X \times_B X} \ar[d]^{\phi} \\
{U} \ar@{^{(}->}[rr]_{\mathrm{inc}_U} & & {X}.
}$$

This gives rise to the following sequence:
$$
\begin{array}{lcl}
g|_V &=& g \circ \mathrm{inc}_V \\
     &=& pr_2 \circ (f, g) \circ \mathrm{inc}_V \\
     &\simeq^B_B& \mu \circ (pr_1, \delta) \circ (f, g) \circ \mathrm{inc}_V \\
     &=& \mu \circ (f, \delta \circ (f, g)) \circ \mathrm{inc}_V \\
     &=& \mu \circ (f, \phi) \circ \mathrm{inc}_V \\
     &=& \mu \circ (f \circ \mathrm{inc}_V, \phi \circ \mathrm{inc}_V) \\
     &=& \mu \circ (f|_V, \mathrm{inc}_U \circ \phi|_V) \\
     &\simeq_B& \mu \circ (f|_V, s_X \circ p_X|_U \circ \phi|_V) \\
     &=& \mu \circ (f|_V, s_X \circ p_X \circ \phi|_V) \\
     &=& \mu \circ (f|_V, s_X \circ p_X \circ pr_1|_V) \\
     &=& \mu \circ (f|_V, s_X \circ p_X \circ f|_V) \\
     &=& \mu \circ (id_X, s_X \circ p_X) \circ f|_V \\
     &\simeq^B_B& id_X \circ f|_V = f|_V.
\end{array}
$$
Thus, we have $g|_{V} \simeq_B f|_{V}$. By applying this argument to open covers, we can derive the desired inequality.
\end{proof}

Lupton and Scherer \cite[Theorem 1]{L-S} showed that the topological complexity of a path-connected CW $H$-space equals its LS category. The following result provides a fibrewise analogue.

\begin{corollary}\label{cor:tcB-equals-catB}
Let $X$ be a fibrewise $H$-space with a fibrewise division map. Then
$\TC_B(X) = \ct_B^{*}(X).$
\end{corollary}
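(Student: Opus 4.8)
The plan is to deduce the equality from \Cref{thm:phd-leq-pcat} together with the already-established chain of inequalities in \Cref{cor:catBX-leq-TCBX-leq-catBXX}. Recall from \Cref{cor:parametrizedTC-as-phd} that $\TC_B(X) = D_B(pr_1, pr_2)$, where $pr_1, pr_2 : X \times_B X \to X$ are the two projections. The crucial observation is that both projections $pr_1$ and $pr_2$ are fibrewise pointed maps: with the section $s_{X \times_B X} = (s_X, s_X)$ on $X \times_B X$, we have $pr_1 \circ (s_X, s_X) = s_X = pr_2 \circ (s_X, s_X)$. Hence $pr_1$ and $pr_2$ are exactly fibrewise pointed maps of the form $X \times_B X \to X$ to which \Cref{thm:phd-leq-pcat} applies.

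First I would apply \Cref{thm:phd-leq-pcat} with $f = pr_1$ and $g = pr_2$. Since $X$ is a fibrewise $H$-space admitting a fibrewise division map, the theorem yields immediately
\[
\TC_B(X) = D_B(pr_1, pr_2) \leq \ct_B^{*}(X).
\]
This is the nontrivial direction and it is essentially handed to us by the preceding theorem; no further work is needed once one checks that the projections satisfy the pointedness hypothesis.

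For the reverse inequality I would invoke \Cref{cor:catBX-leq-TCBX-leq-catBXX}, which asserts $\ct_B^{*}(X) \leq \TC_B(X) \leq \ct_B^{*}(X \times_B X)$ for any fibrewise pointed space $X$. The left half of this chain gives $\ct_B^{*}(X) \leq \TC_B(X)$ with no additional hypotheses beyond $X$ being fibrewise pointed, which is part of the data of a fibrewise $H$-space. Combining the two inequalities forces $\TC_B(X) = \ct_B^{*}(X)$, completing the argument.

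I do not anticipate any serious obstacle here: the corollary is a direct consequence of results proved earlier in the section, and the only point requiring a moment's care is verifying that $pr_1$ and $pr_2$ are fibrewise pointed with respect to the product section $(s_X, s_X)$, so that \Cref{thm:phd-leq-pcat} is genuinely applicable. The bulk of the conceptual content lives in \Cref{thm:phd-leq-pcat}, whose proof exploits the division map; the corollary merely specializes that theorem to the projections and pairs it with the universal lower bound $\ct_B^{*}(X) \leq \TC_B(X)$.
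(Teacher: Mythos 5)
Your proposal is correct and follows exactly the paper's own argument: the inequality $\TC_B(X)\leq \ct_B^{*}(X)$ comes from \Cref{thm:phd-leq-pcat} applied to $f=pr_1$, $g=pr_2$, and the reverse inequality comes from \Cref{cor:catBX-leq-TCBX-leq-catBXX}. Your additional check that $pr_1,pr_2$ are fibrewise pointed with respect to the section $(s_X,s_X)$ is a sensible (and correct) verification of the theorem's hypothesis that the paper leaves implicit.
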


\begin{proof}
The inequality $\TC_B(X) \leq \ct_B^{*}(X)$ follows by setting $f =pr_1$ and $g =pr_2$ in \Cref{thm:phd-leq-pcat}. The reverse inequality is a consequence of \Cref{cor:catBX-leq-TCBX-leq-catBXX}.
\end{proof}

In the following result, we use the product $f\cdot g:=\mu \circ (f,g)=\mu \circ (f\times _Bg)\circ \Delta $ of two fibrewise maps $f,g:X\rightarrow Y$ when $Y$ is a fibrewise $H$-space.
Here, $\Delta : X \to X \times_B X$ is the diagonal map, $f \times_B g : X \times_B X \to Y \times_B Y$ is the fibrewise product map, and $\mu : Y \times_B Y \to Y$ is the fibrewise multiplication map in the fibrewise $H$-space structure on $Y$. 

\begin{proposition}\label{prop:multiplication-fib-spaces-ineq}
Let $f,g,h:X\rightarrow Y$ be fibrewise maps where $Y$ is a fibrewise $H$-space. Then
$$D_B(f\cdot h,g\cdot h)\leq D_B(f,g).$$
\end{proposition}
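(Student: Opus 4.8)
The plan is to reduce the product statement to the composition inequality already established in \Cref{prop:composition-inequalities}. The key observation is that the two fibrewise maps $f\cdot h$ and $g\cdot h$ differ only in the first slot of the multiplication: we have $f\cdot h=\mu\circ(f,h)$ and $g\cdot h=\mu\circ(g,h)$, while the second component $h$ is shared. I would therefore try to factor both maps through a single fibrewise map to which the first map $f$ or $g$ is fed, so that the difference between $f\cdot h$ and $g\cdot h$ is controlled entirely by the difference between $f$ and $g$.

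Concretely, first I would introduce the auxiliary fibrewise map $\Phi:=(\mathrm{id}_X,h):X\to X\times_B X$, sending $x\mapsto(x,h(x))$, which is fibrewise since $h$ is; note $\Phi$ is well defined because $p_X(x)=p_Y(h(x))$. Next, I would observe that $f\cdot h$ and $g\cdot h$ both arise by precomposing $\Phi$ with suitably built maps out of $X\times_B X$. The cleanest route is to consider the two fibrewise maps $\widetilde f,\widetilde g:X\times_B X\to Y$ defined by $\widetilde f:=\mu\circ(f\circ pr_1,pr_2)$ and $\widetilde g:=\mu\circ(g\circ pr_1,pr_2)$, so that $\widetilde f\circ\Phi=\mu\circ(f,h)=f\cdot h$ and likewise $\widetilde g\circ\Phi=g\cdot h$. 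By part $(2)$ of \Cref{prop:composition-inequalities} applied to the map $\Phi$, one gets $D_B(\widetilde f\circ\Phi,\widetilde g\circ\Phi)\leq D_B(\widetilde f,\widetilde g)$, so it remains to bound $D_B(\widetilde f,\widetilde g)$ by $D_B(f,g)$.

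For that final bound, I would write $\widetilde f=\mu\circ(f\circ pr_1,pr_2)$ as a composite $h'\circ(f\circ pr_1)$ is not quite right, so instead I would view $\widetilde f$ and $\widetilde g$ as obtained by postcomposing $f\circ pr_1$ and $g\circ pr_1$ with a common construction. Precisely, set $k:=pr_1:X\times_B X\to X$ and note $f\circ pr_1$ and $g\circ pr_1$ differ only through $f$ versus $g$; applying part $(2)$ of \Cref{prop:composition-inequalities} gives $D_B(f\circ pr_1,g\circ pr_1)\leq D_B(f,g)$. Then, since $\widetilde f=\mu\circ(f\circ pr_1,pr_2)$ and $\widetilde g=\mu\circ(g\circ pr_1,pr_2)$ share the second coordinate $pr_2$ and the multiplication $\mu$, an open set $U$ on which $(f\circ pr_1)|_U\simeq_B(g\circ pr_1)|_U$ via a fibrewise homotopy $F$ yields a fibrewise homotopy $\mu\circ(F,pr_2|_U)$ between $\widetilde f|_U$ and $\widetilde g|_U$; carrying this over an open cover gives $D_B(\widetilde f,\widetilde g)\leq D_B(f\circ pr_1,g\circ pr_1)\leq D_B(f,g)$. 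Chaining the inequalities completes the argument.

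The step I expect to require the most care is verifying that the homotopy $\mu\circ(F,pr_2|_U)$ is genuinely a \emph{fibrewise} homotopy, i.e.\ that it respects the projection to $B$ at every time $t$; this follows because $F$ is fibrewise and $\mu$ is a fibrewise map, but the bookkeeping of the pullback condition defining $Y\times_B Y$ at each stage must be checked. An alternative, and perhaps more economical, route avoids the intermediate maps altogether: given an open $U\subseteq X$ with a fibrewise homotopy $F:f|_U\simeq_B g|_U$, one directly defines $\mu\circ(F,h|_U):I_B(U)\to Y$ and checks it is a fibrewise homotopy from $(f\cdot h)|_U$ to $(g\cdot h)|_U$, then passes to an open cover. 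I would present this direct construction as the main proof, since it is the shortest, and the factorization through $\Phi$ and $\widetilde f,\widetilde g$ serves only as conceptual motivation.
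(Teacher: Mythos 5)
Your direct construction---composing a fibrewise homotopy $F\colon f|_U\simeq_B g|_U$ with $h$ and $\mu$ to obtain $\mu\circ(F,h|_U\circ\rho)\colon (f\cdot h)|_U\simeq_B (g\cdot h)|_U$ and then passing to an open cover---is correct and is essentially the paper's argument unpacked: the paper writes $f\cdot h=\mu\circ(f\times_B h)\circ\Delta$ and chains the previously established inequality $D_B(f\times_B h,g\times_B h)\leq D_B(f,g)$ with \Cref{prop:composition-inequalities}. (One cosmetic slip in your motivational detour: since $h\colon X\to Y$, the map $\Phi=(\mathrm{id}_X,h)$ lands in $X\times_B Y$, not $X\times_B X$, so $\widetilde f,\widetilde g$ should be defined on $X\times_B Y$; this does not affect your main proof.)
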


\begin{proof}
We already know that $D_B(f\times _Bh,g\times _Bh)\leq D_B(f,g)$. Consequently, by applying Proposition \ref{prop:composition-inequalities}, we obtain
$$D_B(f\cdot h,g\cdot h)=D_B(\mu \circ (f\times _Bh)\circ \Delta ,\mu \circ (g\times _Bh)\circ \Delta )\leq D_B(f\times _Bh,g\times _Bh)\leq D_B(f,g).$$
\end{proof}

\begin{corollary}\label{cor:grplike-space}
If $X$ is a fibrewise group-like space, then $D_B(\mu ,\delta )=D_B(id_X,u),$ \noindent where $\delta :X\times _BX\rightarrow X$ is the fibrewise division map, and $u:X\rightarrow X$ represents the fibrewise homotopy inverse.
\end{corollary}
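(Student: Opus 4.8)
The plan is to show the equality $D_B(\mu,\delta)=D_B(id_X,u)$ by exhibiting $(\mu,\delta)$ and $(id_X,u)$ as compositions of one another with fibrewise (homotopy) equivalences, so that \Cref{prop:DBB-compositions-eq} forces equality. Since $X$ is group-like, the division map is $\delta(x,y)=x^{-1}\cdot y$, and the two maps $\mu,\delta:X\times_B X\to X$ differ only by precomposing one factor with the fibrewise homotopy inverse $u$. I would make this precise by introducing the fibrewise pointed homotopy equivalence $v:X\times_B X\to X\times_B X$ defined by $v(x,y)=(x^{-1},y)=(u(x),y)$, noting that $v$ is a fibrewise homotopy equivalence because $u$ is (its homotopy inverse being $v$ itself, up to the fibrewise homotopy $u\circ u\simeq^B_B id_X$ valid in a group-like space).

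The key computation is that $\mu\circ v\simeq_B \delta$ and $\delta\circ v\simeq_B\mu$: indeed $\mu(v(x,y))=\mu(u(x),y)=x^{-1}\cdot y=\delta(x,y)$, and symmetrically $\delta(v(x,y))=u(u(x))\cdot y\simeq_B x\cdot y=\mu(x,y)$. Thus, applying the second part of \Cref{prop:DBB-compositions-eq} with $h=v$ (which has a right fibrewise homotopy inverse, namely $v$ again), I get
$$D_B(\mu,\delta)=D_B(\mu\circ v,\delta\circ v)=D_B(\delta,\mu)=D_B(\mu,\delta),$$
which is consistent but not yet the target. So instead the useful move is to relate $(\mu,\delta)$ directly to $(id_X,u)$. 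Here I would use \Cref{prop:multiplication-fib-spaces-ineq}: multiplying the pair $(id_X,u)$ on the right by a common map and pulling back along the diagonal should convert $D_B(id_X,u)$ into $D_B(\mu,\delta)$. Concretely, precomposing $id_X$ and $u$ with $pr_2:X\times_B X\to X$ and then multiplying on the left by $pr_1$ in the $H$-space sense yields $pr_1\cdot pr_2=\mu$ and $pr_1\cdot(u\circ pr_2)=\mu\circ(pr_1,\delta\text{-part})$, recovering $\mu$ and $\delta$ up to fibrewise homotopy.

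The cleanest route, and the one I would carry out, is a two-sided inequality. For $\le$, apply \Cref{prop:multiplication-fib-spaces-ineq} to get $D_B(pr_1\cdot (id_X\circ pr_2),\,pr_1\cdot(u\circ pr_2))\le D_B(id_X\circ pr_2,u\circ pr_2)\le D_B(id_X,u)$ by \Cref{prop:composition-inequalities}(2), while identifying the left side with $D_B(\mu,\delta)$ via $pr_1\cdot pr_2=\mu$ and $pr_1\cdot(u\circ pr_2)\simeq_B\delta$. For the reverse $\ge$, restrict along the inclusion $i_2:X\to X\times_B X$, $i_2(x)=(s_X\circ p_X(x),x)$: since $\mu\circ i_2\simeq^B_B id_X$ and $\delta\circ i_2(x)=(s_X p_X x)^{-1}\cdot x\simeq_B s_X p_X x\cdot x$, one checks $\delta\circ i_2\simeq_B u$ (using that the section is the unit, so its inverse is again the unit), and then \Cref{prop:composition-inequalities}(2) gives $D_B(id_X,u)=D_B(\mu\circ i_2,\delta\circ i_2)\le D_B(\mu,\delta)$. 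The main obstacle is bookkeeping the fibrewise pointed homotopies so that every identification such as $\delta\circ i_2\simeq_B u$ and $pr_1\cdot(u\circ pr_2)\simeq_B\delta$ genuinely holds in the group-like structure, which rests on the homotopy-associativity and inverse identities recorded in \Cref{group-like}; once these equivariant/unit simplifications are verified fibrewise, the two inequalities combine to the claimed equality.
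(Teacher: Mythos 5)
Your overall architecture --- a two-sided inequality obtained from \Cref{prop:composition-inequalities} and \Cref{prop:multiplication-fib-spaces-ineq} --- is exactly the one the paper uses, but both of your key identifications apply the homotopy inverse $u$ to the wrong factor, and in a non-homotopy-commutative group-like space they are simply false. The division map of the example is $\delta(x,y)=x^{-1}\cdot y$, so the correct decomposition is $\delta=(u\circ pr_1)\cdot pr_2$ alongside $\mu=(id_X\circ pr_1)\cdot pr_2$; your map $pr_1\cdot(u\circ pr_2)$ sends $(x,y)$ to $x\cdot y^{-1}$, which is not fibrewise homotopic to $\delta$ in general (it is not even a division map, since $x\cdot(x\cdot y^{-1})$ is not homotopic to $pr_2$). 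With the corrected decomposition, \Cref{prop:multiplication-fib-spaces-ineq} applies verbatim (the common factor $pr_2$ sits on the right, as the proposition requires) and yields $D_B(\mu,\delta)\leq D_B(id_X\circ pr_1,u\circ pr_1)\leq D_B(id_X,u)$.

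The same transposition derails your reverse inequality. Restricting along $i_2=(s_X\circ p_X,id_X)$ gives $\delta\circ i_2(x)=(s_Xp_X(x))^{-1}\cdot x\simeq_B x$, i.e.\ $\delta\circ i_2\simeq_B id_X$ rather than $u$ --- your own intermediate expression $s_Xp_X(x)\cdot x$ already shows this, since the section acts as a unit --- so $D_B(\mu\circ i_2,\delta\circ i_2)=D_B(id_X,id_X)=0$ and nothing is gained. The paper instead restricts along $i_1=(id_X,s_X\circ p_X)$, for which $\mu\circ i_1\simeq^B_B id_X$ and $\delta\circ i_1=\mu\circ(u,s_X\circ p_X)=\mu\circ i_1\circ u\simeq^B_B u$, whence $D_B(id_X,u)=D_B(\mu\circ i_1,\delta\circ i_1)\leq D_B(\mu,\delta)$ by part (2) of \Cref{prop:composition-inequalities}. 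Your opening digression through $v(x,y)=(u(x),y)$ and \Cref{prop:DBB-compositions-eq} is harmless but, as you yourself observe, circular. Once the two factor swaps are corrected, your argument coincides with the paper's proof.
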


\begin{proof}
Recall the fibrewise pointed map $i_1=(id_X,s_X\circ p_X):X\rightarrow X\times _BX.$ Since $\mu \circ i_1\simeq ^B_B id_X$ and $\delta \circ i_1\simeq ^B_Bu$, we obtain
$D_B(id_X,u)=D_B(\mu \circ i_1,\delta \circ i_1)\leq D_B(\mu ,\delta ).$
Conversely, since $\mu =pr_1\cdot pr_2=(id_X\circ pr_1)\cdot pr_2$ and $\delta =(u\circ pr_1)\cdot pr_2$, we have \[D_B(\mu ,\delta )\leq D_B(id_X\circ pr_1,u\circ pr_1)\leq D_B(id_X,u).\qedhere\]
\end{proof}

Now, assuming that a fibrewise space $X\times_B X$ is normal, we generalize the inequality stated in \Cref{prop:multiplication-fib-spaces-ineq}.

\begin{proposition}\label{prop:fib-mult-ineq}
Let $f,g,h,h':X\rightarrow Y$ be fibrewise maps, where $Y$ is a fibrewise $H$-space and $X\times_B X$ is normal. Then
$D_B(f\cdot h,g\cdot h')\leq D_B(f,g) + D_B(h,h').$
\end{proposition}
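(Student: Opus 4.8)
The plan is to reduce the two-variable statement to the single-variable inequality in \Cref{prop:multiplication-fib-spaces-ineq} by introducing the intermediate fibrewise map $g\cdot h$ and applying the triangle inequality. Since the hypothesis provides that $X\times_B X$ is normal, and since the fibrewise product $f\cdot h=\mu\circ(f\times_B h)\circ\Delta$ has domain $X$, I first observe that the triangle inequality of \Cref{prop: triangle-inequality} applies to the three maps $f\cdot h,\ g\cdot h,\ g\cdot h':X\to Y$ \emph{provided} $X$ itself is normal. This is the first point requiring care: the stated hypothesis is normality of $X\times_B X$, not of $X$. I would handle this by noting that $X$ is a retract of $X\times_B X$ (for instance via $i_1$ followed by $pr_1$, or via the diagonal followed by a projection), and a closed subspace, hence a retract, of a normal space is normal; so normality of $X\times_B X$ indeed forces normality of $X$, which is exactly what the triangle inequality needs.

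With normality of $X$ in hand, the main computation is a two-step estimate. First I would write
\[
D_B(f\cdot h,\, g\cdot h')\ \leq\ D_B(f\cdot h,\, g\cdot h)\ +\ D_B(g\cdot h,\, g\cdot h')
\]
by \Cref{prop: triangle-inequality}. The first term is bounded by $D_B(f,g)$ directly from \Cref{prop:multiplication-fib-spaces-ineq}, applied with the common right factor $h$. For the second term I would invoke the symmetric (left-multiplication) form of \Cref{prop:multiplication-fib-spaces-ineq}: the inequality $D_B(g\cdot h,\, g\cdot h')\leq D_B(h,h')$. Combining the two bounds yields the claimed result.

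The one genuine gap to close is that \Cref{prop:multiplication-fib-spaces-ineq} is stated only for variation in the \emph{left} factor, namely $D_B(f\cdot h,g\cdot h)\leq D_B(f,g)$, whereas the second term above varies the \emph{right} factor. The step I expect to be the main obstacle is therefore establishing the left-variation analogue $D_B(g\cdot h,g\cdot h')\leq D_B(h,h')$. I would prove this exactly as in \Cref{prop:multiplication-fib-spaces-ineq}, using the symmetric product inequality $D_B(g\times_B h,\, g\times_B h')\leq D_B(h,h')$ (noted immediately after \eqref{eq:fibrewise-prod}) together with \Cref{prop:composition-inequalities}, since $g\cdot h=\mu\circ(g\times_B h)\circ\Delta$ and $g\cdot h'=\mu\circ(g\times_B h')\circ\Delta$ differ only through the right factor. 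With both one-sided inequalities available, the triangle-inequality assembly above completes the proof, and I would close by remarking that the normality hypothesis on $X\times_B X$ is what legitimizes the single application of \Cref{prop: triangle-inequality} on $X$.
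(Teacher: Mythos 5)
Your proof is correct, and it uses the same three ingredients as the paper (the triangle inequality, the composition inequalities of \Cref{prop:composition-inequalities}, and the one-sided product inequalities surrounding \eqref{eq:fibrewise-prod}), but it assembles them in the opposite order, which changes where normality is needed. The paper first applies \Cref{prop:composition-inequalities} to collapse $D_B(f\cdot h,g\cdot h')$ to $D_B(f\times_B h,\,g\times_B h')$ and only then invokes the triangle inequality for the three maps $f\times_B h,\ g\times_B h,\ g\times_B h'$ defined on $X\times_B X$ --- so the stated hypothesis that $X\times_B X$ is normal is used verbatim. You instead apply the triangle inequality first, to $f\cdot h,\ g\cdot h,\ g\cdot h'$ defined on $X$, and then bound each term by \Cref{prop:multiplication-fib-spaces-ineq} and its right-factor analogue (which is indeed immediate from $D_B(g\times_B h,g\times_B h')\leq D_B(h,h')$ together with \Cref{prop:composition-inequalities}, exactly as you say). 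This buys you a formally weaker hypothesis: your argument only needs $X$ normal, and you correctly recover this from the stated hypothesis because $\Delta(X)\cong X$ is a retract of $X\times_B X$ via $\Delta\circ pr_1$, and a retract of a normal space is normal (pull back disjoint closed sets along the retraction, separate them, and intersect with the subspace). Two small cautions: your phrase ``a closed subspace, hence a retract'' conflates two separate facts --- the retract argument alone is what you need, and you should not expect the retract to be closed since $X\times_B X$ need not be Hausdorff; and your alternative retraction via $i_1$ is unavailable here because $X$ is not assumed to be fibrewise pointed (only $Y$ is, being a fibrewise $H$-space), so you should rely on the diagonal. Neither point affects the validity of the proof.
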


\begin{proof}
Using the triangle inequality, the fact $D_B(f\times _Bh,g\times _Bh)\leq D_B(f,g)$ and \Cref{prop:composition-inequalities}, we obtain
$$\begin{array}{ll}
D_B(f\cdot h,g\cdot h')& = D_B(\mu \circ (f\times _Bh)\circ \Delta ,\mu \circ (g\times _Bh')\circ \Delta )\\
& \leq D_B(f\times _Bh,g\times _Bh')\\
&\leq D_B(f\times _Bh, g\times _Bh)+ D_B(g\times _Bh, g\times _Bh')\\
&\leq D_B(f,g)+ D_B(h,h').\qedhere
\end{array}
$$
\end{proof}

\section{Fibrewise fibrations}\label{sec: fibrewise fibrations}

In this section, we aim to estimate the parametrized homotopic distance of fibre-preserving maps between fibrewise fibrations. Our approach relates this distance to the parametrized homotopic distance of the induced maps on individual fibres and the fibrewise unpointed LS category of the base space.

We begin by recalling the concept of a fibre in the fibrewise setting.
Let $X$ be a fibrewise pointed space over $B$ with section $s_X:B\to X$, and let $\pi:E\to X$ be a fibrewise fibration. 
The fibre of $\pi:E\to X$ at $s_X$ is given by the following pullback:
$$\xymatrix{
{F} \ar[r]^i \ar[d] & {E} \ar[d]^{\pi } \\
{B} \ar[r]_{s_X} & {X.}
}$$
%\[ \begin{tikzcd}
%F \arrow{r}{i} \arrow[swap]{d}{} & E \arrow{d}{\pi} \\%
%B \arrow{r}{s_X}&X.
%\end{tikzcd}
%\]
Since $s_X$ is an embedding, we can equivalently consider $F=\pi^{-1}(s_X(B))$, with $i:F\hookrightarrow E$ being the natural inclusion, and the obvious projection $F\stackrel{\pi |_F}{\longrightarrow }s_X(B)\stackrel{s_X^{-1}}\longrightarrow B.$

Let $X$, $X'$ be fibrewise pointed spaces over $B$. Suppose $\pi:E\to X$ and $\pi':E'\to X'$ are fibrewise fibrations with fibres $F$ and $F'$, respectively. Let $f,g:E\rightarrow E'$ be fibrewise maps, and $\bar{f}$, $\bar{g}$ be fibrewise pointed maps satisfying  $\pi'\circ f=\bar{f}\circ \pi$ and $\pi'\circ g=\bar{g}\circ \pi$:
\begin{equation}\label{dia:comm-diagram-fibrewise-fibration}
\xymatrix{
{E} \ar[d]_{\pi } \ar[rr]^{f,g} & & {E'} \ar[d]
^{\pi '} \\
{X} \ar[rr]_{\bar{f},\bar{g}} & & {X'.}
}
%\begin{tikzcd}
%E \arrow{r}{f, g} \arrow[swap]{d}{\pi} & E' \arrow{d}{\pi'} \\%
%X \arrow{r}{\bar{f},\bar{g}}&X'. 
%\end{tikzcd}    
\end{equation}

\noindent Since $\bar{f}\circ s_X=s_{X'}=\bar{g}\circ s_X$ it follows $f(F)\subset F'$ and $g(F)\subset F'$. Therefore, we obtain induced fibrewise maps $f_0=f|_{F}:F\to F'$ and $g_0=g|_{F}:F\to F'$ between the corresponding fibres. Under these hypotheses, we have the following theorem:

\begin{theorem}\label{thm: fibrewise-fibration}
$D_B(f,g)+1\leq (D_B(f_0,g_0)+1)\cdot (\ct_B^{*}(X)+1).$
\end{theorem}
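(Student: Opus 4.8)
The plan is to prove the inequality by exhibiting an explicit open cover of $E$ of cardinality $(D_B(f_0,g_0)+1)\cdot(\ct_B^{*}(X)+1)$, on each member of which $f$ and $g$ are fibrewise homotopic. Write $m=\ct_B^{*}(X)$ and $n=D_B(f_0,g_0)$; we may assume both are finite, since otherwise the inequality is trivial. Because $\ct_B^{*}(X)=D_B(id_X,s_X\circ p_X)$, there is an open cover $\{V_0,\dots,V_m\}$ of $X$ together with fibrewise homotopies $H^j\colon I_B(V_j)\to X$ from $\mathrm{inc}_{V_j}$ to $s_X\circ p_X|_{V_j}$; and because $D_B(f_0,g_0)=n$, there is an open cover $\{A_0,\dots,A_n\}$ of $F$ with $f_0|_{A_k}\simeq_B g_0|_{A_k}$ for every $k$.

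The central construction, which I expect to be the main obstacle, is a fibrewise retraction of $\pi^{-1}(V_j)$ onto the fibre $F$, built from the categorical contraction over $V_j$. Over each $V_j$ I would form the fibrewise homotopy $H^j\circ I_B(\pi|_{\pi^{-1}(V_j)})\colon I_B(\pi^{-1}(V_j))\to X$; it starts at $\pi|_{\pi^{-1}(V_j)}$ and ends at $s_X\circ p_X\circ\pi$ restricted to $\pi^{-1}(V_j)$. Feeding this homotopy, together with the inclusion $\pi^{-1}(V_j)\hookrightarrow E$ as the initial lift, into the fibrewise Homotopy Lifting Property of the fibrewise fibration $\pi$, I obtain a fibrewise homotopy $\widetilde H\colon I_B(\pi^{-1}(V_j))\to E$ with $\pi\circ\widetilde H_1=s_X\circ p_X\circ\pi$. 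The endpoint therefore lands in $\pi^{-1}(s_X(B))=F$, so corestricting it gives a fibrewise map $r_j\colon\pi^{-1}(V_j)\to F$, and $\widetilde H$ becomes a fibrewise homotopy $\mathrm{inc}\simeq_B i\circ r_j$ in $E$, where $i\colon F\hookrightarrow E$. The delicate points here are checking that the lifted endpoint genuinely corestricts to $F$ and that every map and homotopy produced by the lift stays strictly over $B$; both follow from the identities $p_X\circ\pi=p_E$ and $\pi\circ\widetilde H_1=s_X\circ p_E$.

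With $r_j$ available, the remainder is formal. I would set $W_{jk}:=r_j^{-1}(A_k)\subset\pi^{-1}(V_j)$, an open subset of $E$; since $\{V_j\}$ covers $X$ and $\{A_k\}$ covers $F$, the $(m+1)(n+1)$ sets $W_{jk}$ cover $E$. On $\pi^{-1}(V_j)$, composing the fibrewise homotopy $\mathrm{inc}\simeq_B i\circ r_j$ with $f$ and using $f\circ i=i'\circ f_0$ (valid because $f(F)\subset F'$, with $i'\colon F'\hookrightarrow E'$) yields $f|_{\pi^{-1}(V_j)}\simeq_B i'\circ f_0\circ r_j$, and likewise $g|_{\pi^{-1}(V_j)}\simeq_B i'\circ g_0\circ r_j$. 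Restricting to $W_{jk}$, where $r_j$ takes values in $A_k$ and $f_0|_{A_k}\simeq_B g_0|_{A_k}$, I chain these fibrewise homotopies to get $f|_{W_{jk}}\simeq_B i'\circ f_0\circ r_j|_{W_{jk}}\simeq_B i'\circ g_0\circ r_j|_{W_{jk}}\simeq_B g|_{W_{jk}}$.

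Thus $\{W_{jk}\mid 0\le j\le m,\ 0\le k\le n\}$ is an open cover of $E$ of size $(m+1)(n+1)$ over whose members $f$ and $g$ are fibrewise homotopic, whence $D_B(f,g)+1\le(n+1)(m+1)=(D_B(f_0,g_0)+1)\cdot(\ct_B^{*}(X)+1)$, as required. I note that no normality or point-set hypotheses are needed, since the argument rests entirely on the covering property and the fibrewise HLP rather than on any mixing-of-covers lemma.
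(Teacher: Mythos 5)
Your proof is correct and follows essentially the same route as the paper: lift the categorical contraction of each $V_j$ through the fibrewise fibration via the HLP, use that the lifted endpoint lands in $F=\pi^{-1}(s_X(B))$ to produce the map $r_j\colon\pi^{-1}(V_j)\to F$, and intersect with preimages of the cover of $F$. The only difference is that you spell out the final chain of fibrewise homotopies explicitly, whereas the paper delegates that step to the corresponding argument of Mac\'ias-Virg\'os and Mosquera-Lois in the non-fibrewise case; your observation that no normality hypothesis is needed is also consistent with the paper.
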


\begin{proof}
Suppose $\ct_B^*(X)=n$ and $D_B(f_0,g_0)=m$.
Consider $\{U_0,\dots,U_n\}$ a fibrewise categorical open cover of $X$, and let $\{V_0,\dots,V_m\}$ be an open cover of $F_0$ such that $f_{0}|_{Vj}\simeq_B g_{0}|_{Vj}$ for $0\leq j\leq m$. For each $0\leq j\leq m$, denote this fibrewise homotopy by $F_j:I_B(V_j)\to F'$.

Since each $U_i$ is a fibrewise categorical open subset of $X$, there exists a fibrewise homotopy $H_i:I_B(U_i)\to X$ with $H_i:\mathrm{inc}_{U_i}\simeq_B s_X\circ p_{X}|_{U_i}$. Define $U'_i:=p^{-1}(U_i)$. Then, applying the fibrewise homotopy lifting property for $\pi$, we obtain a lifted homotopy $\tilde{H}_i:I_B(U'_i)\to E$ that makes the following diagram commute:
$$\xymatrix{
{U'_i} \ar[d]_{i_0} \ar@{^{(}->}@<-2pt>[rrr] & & &  {E} \ar[d]^{\pi } \\
{I_B(U'_i)} \ar@{.>}[urrr]^{\tilde{H}_i} \ar[rr]_{I_B(\pi )} & & {I_B(U_i)} \ar[r]_(.6){H_i} & {X}.
}$$
%    \[
%\begin{tikzcd}
%    U'_i \times \{0\} \arrow[d, hook] \arrow[rrr, hook]                  &  &                                                 & E  \arrow[d, "\pi"] \\
%    I_B(U'_i) \arrow[urrr, "\tilde{H}_i", dashed] \arrow[rr, "I_B(\pi )"] &  & I_B(U_i) \arrow[r, "H_i"] & X.
%    \end{tikzcd}
%    \]
Since $\pi\circ \tilde{H}_i=H_i\circ I_B(\pi )$, we have $\pi (\tilde{H}_i(x,1))=H_i(\pi(x),1)=(s_X\circ p_X)(\pi(x))\in s_X(B).$ In other words,
    $\tilde{H}_i(x,1)\in \pi^{-1}(s_X(B))=F$.
This defines a fibrewise map $\tilde{H}_{i,1}:=\tilde{H}_i(-,1):U_i'\to F$.
Now define $W_{i,j}=U_i'\cap V_j'$ for $i\in \{0,1,\dots, n\}$ and $j\in \{0,1,\dots, m\}$, where $V_j'=\tilde{H}_{i,1}^{-1}(V_j)$. 
Finally, to show that \( f|_{W_{i,j}} \simeq_B g|_{W_{i,j}} \) for all \( i \in \{0,1,\dots, n\} \) and \( j \in \{0,1,\dots, m\} \), we use the same homotopy defined in the non-fibrewise setting, as shown in the proof of \cite[Theorem 6.1]{macias2022homotopic}.
\end{proof}

Let $p: E \to X$ be a fibration with fibre $F$ in the non-fibrewise setting. Varadarajan \cite{V} established the inequality  
\begin{equation}\label{eq:Varadarajan-cat-ineq}
\ct(E) + 1 \leq (\ct(F) + 1) \cdot (\ct(X) + 1).
\end{equation}
We now prove its fibrewise analogue. Observe that when $B$ is a point, this reduces to \eqref{eq:Varadarajan-cat-ineq}.

\begin{corollary}\label{cor:cib-fibration1}
Let $E$ and $X$ be fibrewise pointed spaces over $B$, and let $\pi:E\to X$ be a fibrewise fibration that is also fibrewise pointed. Then, the following inequality holds: 
$$ \ct_{B}^*(E)+1\leq (\ct_B^*( F)+1) \cdot(\ct_B^*(X)+1).$$
\end{corollary}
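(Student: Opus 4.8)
The plan is to obtain this statement as a direct specialization of \Cref{thm: fibrewise-fibration}. I would set $X'=X$, $E'=E$, and $\pi'=\pi$, and then choose the two upstairs fibrewise maps $f,g\colon E\to E$ to be $f=id_E$ and $g=s_E\circ p_E$. With these choices $D_B(f,g)=D_B(id_E,s_E\circ p_E)=\ct_B^{*}(E)$, so the left-hand side of the theorem already produces exactly the quantity we want to bound. Both maps are easily seen to be genuine fibrewise maps, since $p_E\circ(s_E\circ p_E)=(p_E\circ s_E)\circ p_E=p_E$.

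Next I would verify that the hypotheses of \Cref{thm: fibrewise-fibration} are met by exhibiting the required downstairs maps $\bar f,\bar g\colon X\to X$. Taking $\bar f=id_X$ clearly satisfies $\pi\circ id_E=\bar f\circ\pi$. For $g$, I would use that $\pi$ is fibrewise pointed, so $\pi\circ s_E=s_X$, together with $p_E=p_X\circ\pi$; then $\pi\circ(s_E\circ p_E)=s_X\circ p_X\circ\pi$, so $\bar g:=s_X\circ p_X$ satisfies $\pi\circ g=\bar g\circ\pi$. One checks that $\bar g$ is fibrewise pointed since $(s_X\circ p_X)\circ s_X=s_X$. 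As a pleasant consistency check, these natural choices $\bar f,\bar g$ are precisely the pair computing $\ct_B^{*}(X)=D_B(id_X,s_X\circ p_X)$, matching the factor $\ct_B^{*}(X)$ already present in the theorem's conclusion.

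The key step is to identify the induced maps on the fibre. Since $\pi\circ s_E=s_X$, the section $s_E$ lands in $F=\pi^{-1}(s_X(B))$, so $F$ inherits a fibrewise pointed structure with section $s_F=s_E$ and projection $p_F=p_E|_F$. Restricting to $F$ then gives $f_0=id_E|_F=id_F$ and $g_0=(s_E\circ p_E)|_F=s_F\circ p_F$, whence $D_B(f_0,g_0)=D_B(id_F,s_F\circ p_F)=\ct_B^{*}(F)$. Substituting into \Cref{thm: fibrewise-fibration} yields $\ct_B^{*}(E)+1\le(\ct_B^{*}(F)+1)\cdot(\ct_B^{*}(X)+1)$, as desired.

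The only genuinely delicate point, and the one I would write out with care, is the identification $g_0=s_F\circ p_F$: it hinges on checking that $s_E$ factors through the fibre and that the fibrewise pointed structure $(s_F,p_F)$ induced on $F$ is the one making $(s_E\circ p_E)|_F$ agree on the nose with $s_F\circ p_F$. Everything else is formal bookkeeping, so I do not anticipate any real obstacle beyond this compatibility verification.
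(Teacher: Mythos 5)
Your proposal is correct and follows exactly the paper's own route: specialize \Cref{thm: fibrewise-fibration} with $E'=E$, $X'=X$, $f=id_E$, $g=s_E\circ p_E$, $\bar f=id_X$, $\bar g=s_X\circ p_X$, and identify $f_0=id_F$, $g_0=s_F\circ p_F$ so that all three homotopic distances become the corresponding fibrewise unpointed LS categories. Your careful verification that $s_E$ lands in $F=\pi^{-1}(s_X(B))$ (so that $g_0=s_F\circ p_F$ on the nose) is the right point to check, and is in fact slightly more explicit than the paper's phrasing.
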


\begin{proof}
By setting $E'=E$, $X'=X$, $f=id_E$, $g=s_E\circ p_E$, $\bar{f}=id_X$, $\bar{g}=s_X\circ p_X$ in \eqref{dia:comm-diagram-fibrewise-fibration}, we obtain the following commutative diagram:
$$\xymatrix{
{E} \ar[rr]^{id_E,~ s_E\circ p_E} \ar[d]_{\pi } & & {E} \ar[d]^{\pi} \\
{X} \ar[rr]_{id_X,~ s_X\circ p_X} & & {X.}
}$$
% \[
% \begin{tikzcd}[column sep=large]
%E \arrow{r}{id_E,~ s_E\circ p_E} \arrow[swap]{d}{\pi} & E \arrow{d}{\pi} \\%
%X \arrow{r}{id_X,~ s_X\circ p_X}&X.
%\end{tikzcd}  \]
Note that $f_0=id_F$ and $g_0=s_F\circ p_F$, where $s_F$ and $p_F$ are the natural restrictions of $s_X$ and $p_X$ to the fibre $F$.
Therefore, by applying \Cref{thm: fibrewise-fibration} to the diagram above, we obtain:
$$\ct_{B}^*(id_E, s_E\circ p_E)+1\leq (\ct_B^*( id_F, s_F\circ p_F)+1)\cdot (\ct_B^*(id_X, s_X\circ p_X)+1) .\qedhere$$
\end{proof}

For a fibration $F\hookrightarrow E\to X$, Farber and Grant \cite{F-G} proved the inequality
\begin{equation}\label{eq:Farber-Grant-inequality}
    \TC(E)+1\leq (\TC(F)+1)\cdot (\ct(X\times X)+1).
\end{equation} 
The following corollary provides its fibrewise analogue. Observe that when $B$ is a point, we recover \eqref{eq:Farber-Grant-inequality}.

\begin{corollary}\label{cor:cib-fibration2}
Let $E$ and $X$ be fibrewise pointed spaces over $B$ and let $\pi:E\to X$ be a fibrewise fibration. Then
$\TC_B(E)+1\leq (\TC_B(F)+1)\cdot (\ct_B^*(X\times_B X)+1).$ 
\end{corollary}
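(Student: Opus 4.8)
The plan is to deduce this from \Cref{thm: fibrewise-fibration} in exactly the manner of \Cref{cor:cib-fibration1}, but applied to the fibrewise \emph{product} fibration together with the two projection maps, so that parametrized topological complexity enters as a parametrized homotopic distance via \Cref{cor:parametrizedTC-as-phd}. Concretely, I would instantiate the diagram \eqref{dia:comm-diagram-fibrewise-fibration} with the total fibration $\pi\times_B\pi : E\times_B E\to X\times_B X$ on the left and $\pi : E\to X$ on the right, taking $f=pr_1,\ g=pr_2: E\times_B E\to E$ and $\bar{f}=pr_1,\ \bar{g}=pr_2: X\times_B X\to X$.

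First I would check that this data satisfies the hypotheses of \Cref{thm: fibrewise-fibration}. The map $\pi\times_B\pi$ is a fibrewise fibration: given a fibrewise homotopy into $X\times_B X$ with an initial lift into $E\times_B E$, one projects to the two factors, lifts each separately using the fibrewise homotopy lifting property for $\pi$, and reassembles the two lifts into a single fibrewise map to $E\times_B E$ (the two lifts automatically agree over $B$, since all maps in sight are fibrewise). The space $X\times_B X$ is fibrewise pointed with section $(s_X,s_X)$, and $pr_1,pr_2$ are fibrewise pointed because $pr_i\circ(s_X,s_X)=s_X$. The required commutativity relations $\pi\circ pr_1=pr_1\circ(\pi\times_B\pi)$ and $\pi\circ pr_2=pr_2\circ(\pi\times_B\pi)$ are immediate from the definitions.

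The key step is the identification of the fibre. The fibre of $\pi\times_B\pi$ over the section $(s_X,s_X)$ consists of pairs $(e_1,e_2)\in E\times_B E$ with $\pi(e_1),\pi(e_2)\in s_X(B)$, that is, $e_1,e_2\in F=\pi^{-1}(s_X(B))$ lying over a common point of $B$; hence it is $F\times_B F$. Under this identification the restrictions $f_0=pr_1|_{F\times_B F}$ and $g_0=pr_2|_{F\times_B F}$ are precisely the two projections $F\times_B F\to F$, so \Cref{cor:parametrizedTC-as-phd} gives $D_B(f_0,g_0)=\TC_B(F)$. Likewise $D_B(pr_1,pr_2)=\TC_B(E)$ by \Cref{cor:parametrizedTC-as-phd}. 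Substituting these identities, together with $\ct_B^*(X\times_B X)$, into the conclusion of \Cref{thm: fibrewise-fibration} yields the desired inequality.

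I expect the only point requiring genuine care to be the verification that $\pi\times_B\pi$ is a fibrewise fibration with fibre $F\times_B F$; once this is in place, the remainder is a direct bookkeeping translation of the general theorem through \Cref{cor:parametrizedTC-as-phd}.
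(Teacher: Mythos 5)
Your proposal is correct and is essentially identical to the paper's own argument: the paper also instantiates \Cref{dia:comm-diagram-fibrewise-fibration} with $\pi\times_B\pi: E\times_B E\to X\times_B X$ and the two projections, identifies $f_0$ and $g_0$ as the projections $F\times_B F\to F$, and invokes \Cref{thm: fibrewise-fibration} together with \Cref{cor:parametrizedTC-as-phd}. Your additional verifications (that $\pi\times_B\pi$ is a fibrewise fibration and that its fibre over the section $(s_X,s_X)$ is $F\times_B F$) are correct details that the paper leaves implicit.
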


\begin{proof}
Consider the following commutative diagram as in \Cref{dia:comm-diagram-fibrewise-fibration}. 
$$\xymatrix{
{E\times _BE} \ar[d]_{\pi \times _B\pi} \ar[rr]^{pr_1,~ pr_2} & & {E} \ar[d]^{\pi } \\
{X\times _BX} \ar[rr]_{pr_1,~ pr_2} & & {X.}
}$$
% \[
% \begin{tikzcd}[column sep=large]
%E\times _BE \arrow{r}{pr_1,~ pr_2} \arrow[swap]{d}{\pi \times _B\pi} & E \arrow{d}{\pi} \\%
%X\times _BX \arrow{r}{pr_1,~ pr_2}&X.
%\end{tikzcd}  \]
Observe that $f_0=pr_1:F\times_B F\to F$ and $g_0=pr_2:F\times_B F\to F$.
Thus, the desired inequality follows again from \Cref{thm: fibrewise-fibration}.
\end{proof}

\section{Parametrized pointed homotopic distance}\label{sec:pphd}
In this section, we introduce the pointed version of the parametrized homotopic distance, defined within the context of fibrewise pointed homotopy. We show that its value closely matches the non-pointed version and, under mild conditions on dimension and connectivity, both invariants coincide. We begin by defining the parametrized pointed homotopic distance.

\begin{definition}\label{def:pphd}
Let $f,g:X\to Y$ be two fibrewise pointed maps between fibrewise pointed spaces $X$ and $Y$ over $B$. The parametrized pointed homotopic distance between $f$ and $g$, denoted by $D_B^B(f,g)$, is defined as the smallest positive integer $n$ for which there exists an open cover $\{U_0,\dots, U_n\}$ of $X$ with $s_X(B)\subset U_i$ and  $f|_{U_i}\simeq_{B}^Bg|_{U_i}$ for $0\leq i\leq n$.  If no such open cover exists, we set $D_B^B(f,g)=\infty$. 
\end{definition}

The following statements follow directly from \Cref{def:pphd}:
\begin{enumerate}
    \item $D_B^B(f,g)=D_B^B(g,f)$
    \item $D_B^B(f,g)=0$ if, and only if, $f\simeq_B^B g$.
    \item  If $f\simeq_B^B f'$ and $g\simeq_B^B g'$, then  $D_B^B(f,g)=D_B^B(f',g')$.
\end{enumerate}

The numerical invariant associated with the parametrized pointed homotopic distance is the so-called \emph{fibrewise pointed sectional category}, which was introduced in \cite{GC}.

\begin{definition}
The \emph{fibrewise pointed sectional category} of a fibrewise pointed map $f: E \rightarrow X$, denoted by $\mbox{secat}_B^B(f)$, is the smallest integer $n \geq 0$ such that $X$ admits an open cover $\{U_i\}_{i=0}^n$ with $s_X(B) \subset U_i$ and, for each $i$, there exists a fibrewise pointed homotopy section $s_i: U_i \rightarrow E$ of $f$, meaning that $f \circ s_i \simeq_B^B \mbox{inc}_{U_i}$, for each $i$. 
If no such $n$ exists, we define $\mbox{secat}_B^B(f) = \infty$.
\end{definition}

Similarly to the non-pointed case, when $f:E\rightarrow X$ is a fibrewise pointed fibration, the definition of fibrewise pointed sectional category can be refined by requiring the triangles in the diagram to be strictly commutative.

For fibrewise pointed maps $f,g:X\rightarrow Y,$ consider the pullback diagram:
\begin{equation}\label{eq:ptd-pullback-pathfib}
\xymatrix{
{\mathcal{P}_B(f,g)} \ar[d]_{\widetilde{\Pi} _Y} \ar[r] & {P_B(Y)} \ar[d]^{\Pi _Y} \\
{X} \ar[r]_{(f ,g )} & {Y\times _BY.} }    
\end{equation}
Note that $\widetilde{\Pi}_Y$ is a fibrewise pointed fibration, as $\Pi_Y$ is, and they are connected by this pullback. Similar to the non-pointed case, the parametrized pointed homotopic distance can be expressed in terms of the fibrewise pointed sectional category. The proof of this result is analogous and is therefore omitted.

\begin{proposition}\label{prop:DBB-is-secatB}
$D_B^B(f,g)=\mbox{secat}_B^B(\widetilde{\Pi }_Y)$. 
\end{proposition}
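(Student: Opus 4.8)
The plan is to mirror exactly the argument given for the unpointed case in \Cref{prop:phd-equals-secatB}, simply replacing every occurrence of fibrewise homotopy $\simeq_B$ with fibrewise \emph{pointed} homotopy $\simeq_B^B$, and keeping careful track of the section $s_X(B)$ throughout. The two invariants $D_B^B(f,g)$ and $\mbox{secat}_B^B(\widetilde{\Pi}_Y)$ are both defined via open covers $\{U_i\}$ satisfying $s_X(B)\subset U_i$, so the indexing and the "smallest $n$" bookkeeping are identical on both sides; the whole content lies in setting up a bijective correspondence, for a single open set $U$ with $s_X(B)\subset U$, between fibrewise pointed homotopies $F\colon f|_U\simeq_B^B g|_U$ and fibrewise pointed local sections $\sigma\colon U\to\mathcal{P}_B(f,g)$ of $\widetilde{\Pi}_Y$ over $U$.

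First I would fix an open $U\subseteq X$ with $s_X(B)\subset U$ and suppose given a fibrewise pointed homotopy $F\colon f|_U\simeq_B^B g|_U$. Passing to the adjoint (using that $(I_B^B,P_B^B)$ is an adjoint pair, as recorded in \Cref{sec:prelim-fib-homotopy-theory}) yields a fibrewise pointed map $s\colon U\to P_B^B(Y)=P_B(Y)$ with $\Pi_Y\circ s=(f,g)\circ\mathrm{inc}_U$. Because $s$ and $\mathrm{inc}_U$ agree after composing into $Y\times_B Y$ via $(f,g)$ and $\Pi_Y$ respectively, the universal property of the pullback in \eqref{eq:ptd-pullback-pathfib} produces a unique fibrewise map $\sigma\colon U\to\mathcal{P}_B(f,g)$ with $\widetilde{\Pi}_Y\circ\sigma=\mathrm{inc}_U$, exactly as in the diagram in the proof of \Cref{prop:phd-equals-secatB}. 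The only genuinely new point is verifying that $\sigma$ is a fibrewise \emph{pointed} map, i.e.\ that it carries the section correctly; this follows because both $s$ and $\mathrm{inc}_U$ are fibrewise pointed (the adjoint of a pointed homotopy is pointed, since the pointed path space $P_B^B(Y)$ has section $(id_B,c\circ s_Y)$), and the section induced on $\mathcal{P}_B(f,g)$ by the pullback is the unique one compatible with the two projections. Conversely, given a fibrewise pointed local section $\sigma$ of $\widetilde{\Pi}_Y$ over $U$, composing $\sigma$ with the top horizontal map $\mathcal{P}_B(f,g)\to P_B(Y)$ and taking the adjoint recovers a fibrewise pointed homotopy $f|_U\simeq_B^B g|_U$; pointedness is preserved at each step for the same reason.

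Having established this correspondence for a single $U$, I would apply it simultaneously across an open cover $\{U_0,\dots,U_n\}$ with $s_X(B)\subset U_i$ for all $i$: such a cover realizing $D_B^B(f,g)=n$ gives, set by set, pointed local sections realizing $\mbox{secat}_B^B(\widetilde{\Pi}_Y)\leq n$, and vice versa. Since the cardinality of the cover and the constraint $s_X(B)\subset U_i$ are common to both definitions, taking the infimum over all admissible covers yields the equality $D_B^B(f,g)=\mbox{secat}_B^B(\widetilde{\Pi}_Y)$.

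The step I expect to require the most care is the pointedness verification in the adjunction and pullback, since in the unpointed proof this issue simply does not arise; concretely, one must check that the adjunction $I_B^B\dashv P_B^B$ sends pointed homotopies to pointed maps into $P_B^B(Y)$ and conversely, and that the section on $\mathcal{P}_B(f,g)$ induced by \Cref{important2} (closure of $\mathbf{Top}_w(B)$ under pullbacks along fibrewise fibrations) is compatible with $\sigma$. Given that the authors have already noted $\widetilde{\Pi}_Y$ is a fibrewise pointed fibration obtained as a pullback of the pointed fibration $\Pi_Y$, these compatibilities are routine, which is presumably why the paper states that the proof "is analogous and is therefore omitted."
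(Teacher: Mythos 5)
Your proposal is correct and takes exactly the approach the paper intends: the paper omits the proof of \Cref{prop:DBB-is-secatB}, stating only that it is analogous to \Cref{prop:phd-equals-secatB}, and your argument is precisely that analogue, with the necessary pointedness checks (the adjoint of a pointed homotopy lands in $P_B^B(Y)$ respecting the section $(id_B,c\circ s_Y)$, and the pullback-induced section on $\mathcal{P}_B(f,g)$ is respected by uniqueness in the universal property) carried out correctly.
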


Other notable examples of numerical invariants expressible in terms of the fibrewise pointed sectional category include the fibrewise pointed LS category, $\mbox{cat}^B_B(X)$, and the fibrewise pointed topological complexity, $\mbox{TC}^B_B(X)$, both studied in \cite{Calcines-fibrewiseTC} and \cite{GC}. Specifically, for any fibrewise pointed space $X$, we have $\mbox{cat}^B_B(X)=\mbox{secat}^B_B(s_X)$ and $\mbox{TC}^B_B(X)=\mbox{secat}^B_B(\Pi_X)$.
Applying similar ideas as in the proofs of \Cref{prop:fLScat=DBi1i2} and \Cref{cor:parametrizedTC-as-phd}, we can establish the following result:

\begin{proposition}
Let $X$ be a fibrewise pointed space. Then
$$\ct_B^B(X)=D_B^B(i_1,i_2)\hspace{8pt}\mbox{and}\hspace{8pt}\TC_B^B(X)=D_B^B(pr_1,pr_2).$$  
\end{proposition}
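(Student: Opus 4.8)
The plan is to prove the two identities by the same strategy already used in the unpointed setting, namely \Cref{prop:fLScat=DBi1i2} and \Cref{cor:parametrizedTC-as-phd}, but now carrying the section $s_X:B\to X$ through every construction and replacing $\simeq_B$ with $\simeq_B^B$ throughout. The common underlying tool is \Cref{prop:DBB-is-secatB}, which expresses $D_B^B(f,g)$ as $\mbox{secat}_B^B(\widetilde{\Pi}_Y)$, so each identity reduces to matching the relevant pullback fibration with $s_X$ or $\Pi_X$.

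For the equality $\TC_B^B(X)=D_B^B(pr_1,pr_2)$ the argument is the cleanest. Applying the pointed analogue of the observation in \Cref{cor:parametrizedTC-as-phd}, I would note that the map $(pr_1,pr_2):X\times_B X\to X\times_B X$ is the identity, so the pullback in \eqref{eq:ptd-pullback-pathfib} is trivial and the induced fibrewise pointed fibration $\widetilde{\Pi}_X$ coincides with $\Pi_X:P_B(X)\to X\times_B X$. Then \Cref{prop:DBB-is-secatB} gives $D_B^B(pr_1,pr_2)=\mbox{secat}_B^B(\widetilde{\Pi}_X)=\mbox{secat}_B^B(\Pi_X)$, which is exactly the stated description $\TC_B^B(X)=\mbox{secat}_B^B(\Pi_X)$ recalled just above the proposition.

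For the equality $\ct_B^B(X)=D_B^B(i_1,i_2)$ I would mimic the proof of \Cref{prop:fLScat=DBi1i2}, but with pointed homotopies. Given a fibrewise \emph{pointed} categorical open set $U$ (so $s_X(B)\subset U$ and $\mbox{inc}_U\simeq_B^B s_X\circ p_X|_U$), I would take the witnessing pointed homotopy $H$ and build $H'$ by the same concatenation formula used there, checking that the concatenation is a \emph{fibrewise pointed} homotopy between $i_1|_U$ and $i_2|_U$; the key new verification is that $H'$ fixes the section, which follows because $H$ does, and because both $i_1$ and $i_2$ restrict to the section $s_{X\times_B X}=(s_X,s_X)$ on $s_X(B)$. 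Conversely, composing a pointed homotopy between $i_1|_U$ and $i_2|_U$ with the first projection $pr_1$ recovers a pointed homotopy between $\mbox{inc}_U$ and $s_X\circ p_X|_U$, so the two open covers correspond, yielding $\ct_B^B(X)=\mbox{secat}_B^B(s_X)=D_B^B(i_1,i_2)$.

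The main obstacle, such as it is, lies in confirming that every homotopy appearing in the unpointed proofs genuinely respects the section; since $\mathbf{Top}(B)$ carries the pointed cylinder $I_B^B$, one must verify that the piecewise formula for $H'$ and the projection-composition are compatible with the identifications defining $I_B^B$, i.e.\ that they send $s_X(B)\times I$ to the section at every parameter. This is routine given that $i_1,i_2,pr_1$ are fibrewise pointed maps and that $H$ is a pointed homotopy, so no genuine difficulty arises; the proposition therefore follows by the indicated adaptations, and I would simply remark that the details parallel \Cref{prop:fLScat=DBi1i2} and \Cref{cor:parametrizedTC-as-phd} verbatim in the pointed category.
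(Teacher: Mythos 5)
Your proposal is correct and follows exactly the route the paper intends: the paper itself only says the result follows by "applying similar ideas as in the proofs of" \Cref{prop:fLScat=DBi1i2} and \Cref{cor:parametrizedTC-as-phd}, and your write-up supplies precisely those adaptations (the identification $\widetilde{\Pi}_X=\Pi_X$ via \Cref{prop:DBB-is-secatB}, and the pointedness check on the concatenated homotopy $H'$ and on the composite with $pr_1$). No gaps.
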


Having analyzed the properties of the parametrized homotopic distance in \Cref{sec:phd-properties}, we may proceed analogously for its pointed version. Key properties include its behavior under compositions and their consequences, fibrewise pointed homotopy invariance, the triangle inequality for the parametrized pointed homotopic distance, and its behavior under the product of maps. Additional results concern fibrewise $H$-spaces, their interaction with the multiplication of fibrewise pointed maps, and fibrewise pointed fibrations.
Since the statements and proofs follow the same structure, with only minor adjustments (mutatis mutandis), we omit them. We leave the details to the reader, presenting them as an instructive exercise.

\subsection{Cohomological lower bound}
To study cohomological bounds on the fibrewise LS category, James and Morris \cite{J-M} introduce the fibrewise cohomology ring, defined as the quotient
$$H^*(X) / \langle p_X^*(H^*(B)) \rangle,$$
where $\langle p_X^*(H^*(B)) \rangle$ denotes the ideal generated by the image of $p_X^*(H^*(B))$. However, when $X$ is a fibrewise pointed space, a more manageable version of this ring can be considered.
Here and throughout, we consider singular cohomology with coefficients in a commutative ring $R$ with unity, omitting $R$ from the notation.

We first observe that from the split short exact sequence of graded abelian groups
$$
\xymatrix{
{0} \ar[r] & {H^*(B)} \ar[r]^{p_X^*} & {H^*(X)} \ar@/^1pc/[l]^{s_X^*} \ar[r] & {H^*(X) / \langle p_X^*(H^*(B)) \rangle} \ar[r] & {0}
}
$$
\noindent we obtain the isomorphism
$H^*(X)/\left<p_X^*(H^*(B))\right>\cong \mathrm{ker}(s_X^*)$.
Moreover, if  $H^*(X,s_X(B))$ denotes the cohomology of the pair $(X,s_X(B))$, then there is an isomorphism:
$$
H^*(X,s_X(B))\cong \mathrm{ker}(s_X^*).
$$    
Indeed, let $i:s_X(B)\to X$ and $j:(X,\emptyset)\to (X,s_X(B))$ be the inclusions. Consider the following segment of the long exact sequence in cohomology associated with the pair $(X,s_X(B))$:
$$
\xymatrix{
{H^{*+1}(X)} \ar[r]^{i^*} & {H^{*+1}(s_X(B))} \ar[r]^{\delta ^*} & {H^*(X, s_X(B))} \ar[r]^(.6){j^*} & {H^*(X)} \ar[r]^(.4){i^*} & {H^*(s_X(B)).}
}
$$

Since $i^*$ is clearly surjective in all dimensions, we have that $\delta^*=0$. Consequently, $j^*$ must also be surjective.
This implies that
$\mathrm{ker}(s_X^*)=\mathrm{ker}(i^*)=\mathrm{im}(j^*)\cong H^*(X,s_X(B)).$

All the previous arguments enable us to define the fibrewise pointed cohomology of $X$ in a more convenient form:

\begin{definition}\label{def:fib-cohomology}
Let $X$ be a fibrewise pointed space over $B$. The fibrewise
pointed cohomology of $X$ (with coefficients in $R$) is defined as $H_B^*(X):=H^*(X,s_X(B))$. 
\end{definition}

We now proceed to verify the cohomological lower bound on the fibrewise unpointed LS category of a fibrewise pointed space, as given by James and Morris \cite{J-M}.

\begin{proposition}
Let $X$ be a fibrewise pointed space.  Then 
 $
 nil(H_B^*(X))\leq \ct_B^*(X).
 $
\end{proposition}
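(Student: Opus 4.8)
The goal is to show $nil(H_B^*(X)) \leq \ct_B^*(X)$, where $H_B^*(X) = H^*(X, s_X(B))$ is the fibrewise pointed cohomology introduced in Definition~\ref{def:fib-cohomology}, and $nil$ denotes the nilpotency of this ideal (the largest $k$ such that some $k$-fold product of positive-degree classes in $H_B^*(X)$ is nonzero, viewed inside the relative cohomology ring). Let me sketch the plan.

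The plan is to reduce the statement to a standard cup-length argument for the fibrewise unpointed LS category. Suppose $\ct_B^*(X) = n$, so there is an open cover $\{U_0, \dots, U_n\}$ of $X$ with each $U_i$ fibrewise categorical, meaning $\mathrm{inc}_{U_i} \simeq_B s_X \circ p_X|_{U_i}$. First I would reinterpret each such categorical subset cohomologically: since $\mathrm{inc}_{U_i}$ factors up to fibrewise homotopy through $s_X(B)$, every class in $H_B^*(X) = H^*(X, s_X(B))$ restricts trivially to $U_i$. More precisely, using the isomorphism $H_B^*(X) \cong \mathrm{ker}(s_X^*) = \mathrm{im}(j^*)$ established just before the definition, I would show that for each categorical $U_i$ the restriction map $H^*(X, s_X(B)) \to H^*(U_i, s_X(B))$ vanishes on $H_B^*(X)$, equivalently that classes in the fibrewise pointed cohomology lift to relative classes $H^*(X, U_i)$.

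The key step is then the usual relative-cup-product mechanism. Given $k = nil(H_B^*(X)) \leq $ (whatever we are bounding), take positive-degree classes $z_1, \dots, z_k \in H_B^*(X)$ with $z_1 \cdots z_k \neq 0$. Each $z_i$, lying in $\mathrm{ker}(s_X^*)$, admits a lift $\bar z_i \in H^*(X, U_i)$ because $U_i$ is categorical (this is where the categorical condition enters, via the long exact sequence of the pair $(X, U_i)$ and the fact that $z_i|_{U_i} = 0$). The relative cup product then produces a class $\bar z_1 \cdots \bar z_k \in H^*(X, U_0 \cup \cdots \cup U_{k-1})$ mapping to $z_1 \cdots z_k$ under the natural map to $H^*(X)$. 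If $k > n$, then $U_0 \cup \cdots \cup U_n = X$ forces the relative group $H^*(X, X) = 0$, so the product would vanish, a contradiction. Hence $nil(H_B^*(X)) \leq n = \ct_B^*(X)$.

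The main obstacle is handling the \emph{fibrewise pointed} (relative-to-$s_X(B)$) bookkeeping carefully rather than the absolute version: I must ensure the lifts $\bar z_i$ live in the right relative groups and that the relative products are taken modulo $s_X(B)$ consistently, so that the argument takes place genuinely inside $H^*(X, s_X(B))$ and not merely $H^*(X)$. Concretely, the cleanest route is to work with the pairs $(X, U_i \cup s_X(B))$ throughout, noting $s_X(B) \subseteq U_i$ by the definition of a fibrewise categorical cover in the pointed setting, and to invoke the excision/long-exact-sequence identifications already recorded in the excerpt. Once the lifts are correctly placed, the relative cup-product argument and the emptiness of $H^*(X,X)$ close the proof exactly as in the classical Lusternik--Schnirelmann cup-length bound; this final counting step is routine.
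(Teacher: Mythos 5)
Your argument is correct, but it takes a genuinely different route from the paper. The paper's proof is a two-line reduction: since $H_B^*(X)\cong\mathrm{ker}(s_X^*)$ (established just before Definition~\ref{def:fib-cohomology}), the classical cohomological lower bound for sectional category gives $nil(\mathrm{ker}(s_X^*))\leq \sct(s_X)$, and then $\sct(s_X)\leq\sct_B(s_X)=\ct_B^*(X)$ because any fibrewise sectional cover is in particular an ordinary one. You instead re-prove that classical bound from scratch in this special case via the standard relative cup-product mechanism: classes in $\mathrm{ker}(s_X^*)$ restrict to zero on each categorical $U_i$ (since $\mathrm{inc}_{U_i}^*z=(p_X|_{U_i})^*s_X^*z=0$), lift to $H^*(X,U_i)$, and an $(n+1)$-fold product lands in $H^*(X,X)=0$. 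This is essentially the argument the paper deploys later for Theorem~\ref{thm:coho-lb-secatB}; your version is more self-contained and makes visible exactly where the categorical condition enters, while the paper's is shorter and leverages known machinery. One correction to your final paragraph: the "obstacle" you identify is not one. The cover realizing $\ct_B^*(X)$ is for the \emph{unpointed} fibrewise category, so the $U_i$ need not contain $s_X(B)$, and your proposed fix of working with $(X,U_i\cup s_X(B))$ rests on a false premise. But no pointed bookkeeping is required: $j^*:H^*(X,s_X(B))\to H^*(X)$ is injective (the connecting map vanishes because $i^*$ is split surjective), so $H_B^*(X)$ is literally the ideal $\mathrm{ker}(s_X^*)\subseteq H^*(X)$ and vanishing of the product in $H^*(X)$ already suffices.
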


\begin{proof}
Since $H_B^*(X)=\mathrm{ker}(s_X)$ , we have 
$nil(H_B^*(X))\leq \sct(s_X)\leq \sct_B(s_X)=\ct_B^*(X).$
\end{proof}

We now aim to establish a similar bound in the context of a fibrewise pointed map $\pi:E\to X$. To do so, we consider the ring homomorphism
$
\pi^*:H_B^*(X)\to H_B^*(E),
$ which is induced in cohomology by the map of pairs $\pi: (E,s_E(B))\to (X,s_X(B))$.

\begin{theorem}\label{thm:coho-lb-secatB}
Let $\pi:E\to X$ be a fibrewise pointed map. Then 
$
nil(\mathrm{ker}(\pi^*))\leq \sct_B^B(\pi).
$    
\end{theorem}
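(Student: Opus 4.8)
The plan is to mimic the standard cup-length (nilpotency) lower bound for sectional category, but carried out entirely inside the \emph{fibrewise pointed cohomology} $H_B^*(-)=H^*(-,s_{(-)}(B))$. Recall that $\sct_B^B(\pi)=\sct^B_B(\pi)$ is the smallest $n$ such that $X$ admits an open cover $\{U_0,\dots,U_n\}$ by fibrewise pointed subsets (each containing $s_X(B)$) on which $\pi$ has a fibrewise pointed homotopy section. Suppose $\sct_B^B(\pi)=n$ and fix such a cover. The goal is to show that any product of $n+1$ classes in $\ker(\pi^*)\subseteq H_B^*(X)$ vanishes, which forces $\mathrm{nil}(\ker(\pi^*))\le n$.

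First I would set up the relative cohomology machinery. For each open set $U_i$ in the cover, the pair $(U_i,s_X(B))$ sits inside $(X,s_X(B))$, and I would use the long exact sequence of the triple $(X,U_i,s_X(B))$ together with the section $s_i:U_i\to E$ to produce, for each class $z\in\ker(\pi^*)$, a \emph{relative} lift $\bar z_i\in H^*_B(X,U_i):=H^*(X,U_i)$ whose image under the restriction $H^*_B(X,U_i)\to H^*_B(X)=H^*(X,s_X(B))$ is $z$. The crucial input is that on $U_i$ the composite $\pi\circ s_i\simeq_B^B \mathrm{inc}_{U_i}$, so the restriction of $z$ to $U_i$ factors through $\pi^*$ and hence vanishes (because $z\in\ker\pi^*$); this is exactly the condition that lets $z$ lift to the relative group $H^*(X,U_i)$. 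I would phrase this carefully: restriction to $U_i$ of $z$ equals $(s_i)^*\pi^*(z)$ up to the fibrewise pointed homotopy, which is $0$, so $z$ dies in $H^*(U_i,s_X(B))$ and therefore pulls back to $H^*(X,U_i)$.

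Next I would invoke the standard relative cup-product pairing: for open sets $U_0,\dots,U_n$ covering $X$, the product of classes $\bar z_0\smile\cdots\smile\bar z_n$ lands in $H^*(X,U_0\cup\cdots\cup U_n)=H^*(X,X)=0$. Then, by naturality of the cup product with respect to the restriction maps $H^*(X,U_i)\to H^*(X)$ (all taken relative to $s_X(B)$ throughout, so that everything happens inside $H_B^*(X)$), the image of this product is precisely $z_0\smile\cdots\smile z_n$ in $H_B^*(X)$. Hence $z_0\cdots z_n=0$ for any $(n+1)$-tuple of classes in $\ker(\pi^*)$, giving $\mathrm{nil}(\ker(\pi^*))\le n=\sct_B^B(\pi)$, as desired.

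The main obstacle I expect is bookkeeping the base point, i.e.\ keeping every cohomology group relative to $s_X(B)$ simultaneously with the cover-relative lifts $H^*(X,U_i)$: one must work with pairs of the form $(X,U_i\cup s_X(B))$ and verify that, since $s_X(B)\subseteq U_i$ already, these agree with $H^*(X,U_i)$, so the fibrewise pointed cohomology $H_B^*$ is genuinely the ambient ring in which the relative cup-length argument takes place. A secondary technical point is to confirm that the relative cup product $H^*(X,U_0)\otimes\cdots\otimes H^*(X,U_n)\to H^*(X,\bigcup U_i)$ is defined and natural here; this is classical but should be cited. I would prefer to phrase the whole argument as the fibrewise pointed analogue of \cite[Proposition 3.1(1)]{GCrelsecat}, reducing to the relative sectional category statement already available, rather than re-deriving the relative cup-length inequality from scratch.
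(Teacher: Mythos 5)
Your proposal is correct and follows essentially the same route as the paper: lift each class of $\ker(\pi^*)$ to $H^*(X,U_i)$ via the exact sequence of the triple $(X,U_i,s_X(B))$, using that the fibrewise pointed section forces the restriction to $(U_i,s_X(B))$ to factor through $\pi^*$ and hence vanish, then multiply the lifts into $H^*(X,\bigcup U_i)=H^*(X,X)=0$. The paper carries this out directly rather than reducing to a relative-sectional-category statement, but the mechanism is identical.
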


\begin{proof}
Suppose $\sct_B^B(\pi)=n$. Consider an open cover $\{U_i\}_{i=0}^n$ such that $s_X(B)\subset U_i$ for each $i$, and fibrewise pointed maps $\sigma_i:U_i\to E$ such that $\pi\circ \sigma_i\simeq_B^B \mathrm{inc}_{U_i}$. In particular, this implies that for each $i$, we have the following diagram in the category of pairs of spaces, up to homotopy of pairs: 
$$
\xymatrix{
{(U_i, s_X(B))} \ar[rr]^{\mathrm{inc}_{U_i}} \ar[dr]_{\sigma_i} & & {(X, s_X(B))} \\
 & {(E, s_E(B)).} \ar[ur]_{\pi} &
}
$$

Now let $\alpha_0,\dots,\alpha_n$ be cohomology classes from $\mathrm{ker}(\pi^*)\subset H_B^*(X)=H^*(X,s_X(B))$. Fix $\alpha_i\in H_B^{m_i}(X)$ and consider the following portion of the cohomology exact sequence associated with the triple $(X,U_i,s_X(B))$:
$$\xymatrix{
{H^{m_i}(X, U_i)} \ar[r]^{q_i^*} & {H^{m_i}(X, s_X(B))} \ar[r]^{\mathrm{inc}_{U_i}^*} & {H^{m_i}(U_i, s_X(B))} }$$

Since 
$\mathrm{inc}_{U_i}^*(\alpha_i) = \sigma_i^*(\pi^*(\alpha_i)) = \sigma_i^*(0) = 0$, there exist $\bar{\alpha}_i\in H^{m_i}(X,U_i)$ such that $q_i^*(\bar{\alpha}_i)=\alpha_i$.
Finally, we obtain 
$$\begin{array}{ll}
\alpha_0\cup \alpha_1\cup \dots \cup\alpha_n  &  =  q_0^*(\bar{\alpha}_0)\cup q_1^*(\bar{\alpha}_1)\cup\dots q_n^*(\bar{\alpha}_n)\\
& =q^*(\bar{\alpha}_0\cup \bar{\alpha}_1\cup\dots \cup \bar{\alpha}_n)\\
&= q^*(0)=0 \quad (as ~ \bar{\alpha}_0\cup \bar{\alpha}_1\cup\dots \cup \bar{\alpha}_n\in H^m(X,X)),
\end{array}$$
where $q:(X,s_X(B))\to (X,\cup_{i=0}^nU_i)=(X,X)$ denotes the natural inclusion and $m=\sum_{i=0}^nm_i$. 
\end{proof}

We now apply \Cref{prop:DBB-is-secatB} to establish the cohomological lower bound on the parametrized pointed homotopic distance.
Let $f,g:X\to Y$ be fibrewise pointed maps. Recall the pullback diagram described in \eqref{eq:ptd-pullback-pathfib}.
By \Cref{prop:DBB-is-secatB}, we have $D_B^B(f,g)=\sct_B^B(\widetilde{\Pi}_Y)$.
Now, let $\widetilde{\Pi}_Y^*$ be the induced map on cohomology. As a consequence of \Cref{thm:coho-lb-secatB}, we have the following cohomological bound:

\begin{proposition}\label{prop:coho-lb-DBB}
Let $f,g:X\to Y$ be fibrewise pointed maps. Then 
$nil(\mathrm{ker}(\widetilde{\Pi}_Y^*))\leq D_B^B(f,g).$   \end{proposition}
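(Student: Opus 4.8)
The plan is to reduce the statement directly to \Cref{thm:coho-lb-secatB} by means of the identification already established in \Cref{prop:DBB-is-secatB}. Concretely, for fibrewise pointed maps $f,g:X\to Y$, the pullback diagram \eqref{eq:ptd-pullback-pathfib} produces the fibrewise pointed fibration $\widetilde{\Pi}_Y:\mathcal{P}_B(f,g)\to X$, and \Cref{prop:DBB-is-secatB} gives $D_B^B(f,g)=\sct_B^B(\widetilde{\Pi}_Y)$. Since $\widetilde{\Pi}_Y$ is itself a fibrewise pointed map between fibrewise pointed spaces, \Cref{thm:coho-lb-secatB} applies verbatim with $\pi=\widetilde{\Pi}_Y$, $E=\mathcal{P}_B(f,g)$, and yields $nil(\mathrm{ker}(\widetilde{\Pi}_Y^*))\leq \sct_B^B(\widetilde{\Pi}_Y)$. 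Chaining these two facts gives the claim.

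So the proof I would write is essentially one line:
\begin{proof}
By \Cref{prop:DBB-is-secatB}, $D_B^B(f,g)=\sct_B^B(\widetilde{\Pi}_Y)$. Since $\widetilde{\Pi}_Y:\mathcal{P}_B(f,g)\to X$ is a fibrewise pointed map, \Cref{thm:coho-lb-secatB} yields
$nil(\mathrm{ker}(\widetilde{\Pi}_Y^*))\leq \sct_B^B(\widetilde{\Pi}_Y)=D_B^B(f,g)$.
\end{proof}

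The only points that require care, rather than hard work, are bookkeeping ones. First, I must make sure $\mathcal{P}_B(f,g)$ is genuinely a fibrewise \emph{pointed} space, so that its fibrewise pointed cohomology $H_B^*(\mathcal{P}_B(f,g))=H^*(\mathcal{P}_B(f,g),s(B))$ and the induced map $\widetilde{\Pi}_Y^*$ in \Cref{def:fib-cohomology} are defined; this is exactly the content of the remark following \eqref{eq:ptd-pullback-pathfib}, namely that the pullback inherits a section from $P_B^B(Y)$ making $\widetilde{\Pi}_Y$ a fibrewise pointed fibration. Second, I should confirm that $\widetilde{\Pi}_Y^*$ in the statement means precisely the map $H_B^*(X)\to H_B^*(\mathcal{P}_B(f,g))$ induced on relative cohomology by the map of pairs, matching the hypothesis of \Cref{thm:coho-lb-secatB}; the paragraph preceding the proposition already fixes this interpretation.

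I do not anticipate a genuine obstacle here: the result is a formal corollary obtained by specializing the general cohomological bound of \Cref{thm:coho-lb-secatB} to the particular fibrewise pointed fibration $\widetilde{\Pi}_Y$ and then invoking the sectional-category description of $D_B^B$. The closest thing to a ``hard part'' is simply verifying that all the pointedness and fibration hypotheses of \Cref{thm:coho-lb-secatB} transfer to $\widetilde{\Pi}_Y$, but these are guaranteed by the construction of the pullback \eqref{eq:ptd-pullback-pathfib} together with \Cref{important2}, which ensures that pullbacks of fibrewise pointed fibrations stay within the well-pointed framework. Hence the entire argument is a two-step composition of already-proven results, and I would present it exactly as the short proof above.
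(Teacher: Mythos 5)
Your proof is correct and follows exactly the paper's route: the paper likewise obtains this proposition by combining \Cref{prop:DBB-is-secatB} (giving $D_B^B(f,g)=\mathrm{secat}_B^B(\widetilde{\Pi}_Y)$) with \Cref{thm:coho-lb-secatB} applied to the fibrewise pointed fibration $\widetilde{\Pi}_Y$. Your bookkeeping remarks about pointedness of $\mathcal{P}_B(f,g)$ and the meaning of $\widetilde{\Pi}_Y^*$ match the paper's setup preceding the statement.
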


We now establish a cohomological lower bound for the pointed parametrized topological complexity.
\begin{proposition}
Let $X$ be a fibrewise pointed space over $B$ and let $\Delta_X:X\to X\times_B X$ be the diagonal map. Then the following inequality holds: $$nil(\mathrm{ker}[\Delta_X^*:H_B^*(X\times_B X)\to H_B^*(X)])\leq \TC_B^B(X).$$
\end{proposition}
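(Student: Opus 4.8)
The plan is to derive this inequality from the cohomological lower bound for the parametrized pointed homotopic distance in \Cref{prop:coho-lb-DBB}, combined with the identification $\TC_B^B(X)=D_B^B(pr_1,pr_2)$. Applying \Cref{prop:coho-lb-DBB} to the fibrewise pointed projections $pr_1,pr_2:X\times_B X\to X$ gives $nil(\mathrm{ker}(\widetilde{\Pi}_X^*))\leq D_B^B(pr_1,pr_2)=\TC_B^B(X)$, where $\widetilde{\Pi}_X$ is the pullback of $\Pi_X:P_B(X)\to X\times_B X$ along $(pr_1,pr_2)$. Exactly as in \Cref{cor:parametrizedTC-as-phd}, the map $(pr_1,pr_2)$ is the identity on $X\times_B X$, so $\widetilde{\Pi}_X=\Pi_X$ and the bound reduces to $nil(\mathrm{ker}(\Pi_X^*))\leq \TC_B^B(X)$.

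It then remains to identify $\mathrm{ker}(\Pi_X^*)$ with $\mathrm{ker}(\Delta_X^*)$ as subsets of $H_B^*(X\times_B X)$. For this I would use the fibrewise map $\gamma_X:X\to P_B(X)$, $x\mapsto (p_X(x),c_x)$, from \Cref{prim}. A direct computation gives $\Pi_X\circ\gamma_X=\Delta_X$, since $\Pi_X(p_X(x),c_x)=(c_x(0),c_x(1))=(x,x)$, and both maps are fibrewise pointed. Passing to fibrewise pointed cohomology, contravariant functoriality yields $\Delta_X^*=\gamma_X^*\circ\Pi_X^*$; so once $\gamma_X^*:H_B^*(P_B(X))\to H_B^*(X)$ is known to be injective, one gets $\mathrm{ker}(\Delta_X^*)=\mathrm{ker}(\Pi_X^*)$, and hence $nil(\mathrm{ker}(\Delta_X^*))=nil(\mathrm{ker}(\Pi_X^*))\leq \TC_B^B(X)$, as required.

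The main obstacle, and the step deserving care, is verifying that $\gamma_X^*$ is an isomorphism on the fibrewise pointed cohomology $H_B^*(-)=H^*(-,s(B))$ of \Cref{def:fib-cohomology}. I would argue that $\gamma_X$ is a fibrewise pointed homotopy equivalence of pairs. First, it is a fibrewise pointed map: since $p_X\circ s_X=id_B$, one computes $\gamma_X(s_X(b))=(b,c_{s_X(b)})=s_{P_B(X)}(b)$, so $\gamma_X$ carries the section to the section and descends to a map of pairs. Second, its homotopy inverse $\gamma'_X$ from \Cref{prim} satisfies $\gamma'_X\circ\gamma_X=id_X$ strictly, while the homotopy realizing $\gamma_X\circ\gamma'_X\simeq_B id_{P_B(X)}$ (reparametrizing each path $\alpha$ toward its initial point $\alpha(0)$) fixes the section throughout, because a constant path remains constant under reparametrization; hence this homotopy is a fibrewise pointed homotopy. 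It follows that $\gamma_X$ induces an isomorphism on $H_B^*$, which completes the chain of equalities and inequalities and establishes the claimed bound.
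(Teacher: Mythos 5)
Your proposal is correct and follows essentially the same route as the paper's proof: reduce to $\mathrm{secat}_B^B(\Pi_X)$ via $\widetilde{\Pi}_X=\Pi_X$, then use the factorization $\Delta_X=\Pi_X\circ\gamma_X$ with $\gamma_X$ a fibrewise pointed homotopy equivalence to identify $\mathrm{ker}(\Delta_X^*)$ with $\mathrm{ker}(\Pi_X^*)$. Your additional verification that the homotopy $\gamma_X\circ\gamma'_X\simeq_B id_{P_B(X)}$ fixes the section is a detail the paper leaves implicit, but the argument is the same.
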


\begin{proof}
Recall that $\TC_B^B(X)=D_B^B(pr_1,pr_2)$, where $pr_1,pr_2:X\times_B X\to X$ are the canonical projections. Since $(pr_1,pr_2):X\times_B X\to X\times_B X$ is the identity map, the pullback $\widetilde{\Pi}_X$ coincides with $\Pi_X$.
Moreover, we have the following commutative diagram up to pointed fibrewise homotopy:
$$\xymatrix{
{X} \ar[rr]^{\gamma _X} \ar[dr]_{\Delta _X} & & {P_B(X)} \ar[dl]^{\Pi _X} \\
 & {X\times_B X,} & 
}$$
where $\gamma _X$ is the  fibrewise pointed homotopy equivalence defined by mapping a point $x$ to $(p_X(x),c_x)$, where $c_x$ represents the constant path at $x$.
Therefore, we have the equality $\mathrm{ker}(\Delta_X^*)=\mathrm{ker}(\Pi_X^*)$.
Using \Cref{thm:coho-lb-secatB}, we conclude the result.
\end{proof}

\section{Pointed vs. unpointed parametrized homotopic distance}\label{sec:pphd-vs-phd}

To conclude our study of parametrized homotopic distance, we compare it with its pointed counterpart in this final section. For fibrewise pointed maps $f, g: X \to Y$, note that the inequality $D_B(f, g) \leq D_B^B(f, g).
$ naturally holds.

The next result proves that, under relatively mild conditions, the gap between the parametrized homotopic distance and its pointed version is less significant than one might initially anticipate. Before presenting this result, we first introduce some preliminary lemmas.  
Recall that a fibrewise well-pointed space is a fibrewise pointed space where the section is a closed fibrewise cofibration.

\begin{lemma}\label{previo}
Let $X$ be a fibrewise pointed space. If $X$ is both a fibrant fibrewise space and an ANR (absolute neighborhood retract) space, then $X$ is fibrewise well-pointed.
\end{lemma}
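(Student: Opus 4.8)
The statement to prove is Lemma~\ref{previo}: if a fibrewise pointed space $X$ over $B$ is fibrant (i.e.\ $p_X:X\to B$ is a Hurewicz fibration) and $X$ is an ANR, then the section $s_X:B\to X$ is a closed fibrewise cofibration, so that $X$ is fibrewise well-pointed.

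\textbf{Overall approach.} The plan is to reduce the fibrewise cofibration condition for $s_X$ to a standard cofibration criterion that can be verified using the ANR hypothesis together with the fibration hypothesis. Recall that a fibrewise map $j:A\to X$ is a fibrewise cofibration precisely when it is a cofibration in the internal sense with respect to the fibrewise cylinder $I_B$, and, as noted in the excerpt, fibrewise cofibrations are cofibrations in the usual sense. The most efficient route is to use the characterization of closed cofibrations via the existence of a \emph{retraction of the mapping cylinder}, i.e.\ $s_X$ is a closed fibrewise cofibration if and only if there is a fibrewise retraction $r: I_B(X) \to X\times\{0\}\cup s_X(B)\times I$, together with the fact that $s_X(B)$ is closed in $X$ (which holds automatically since $s_X$ is an embedding of $B$ as the image of a section of a fibration into a suitably separated space).

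\textbf{Key steps, in order.} First I would record that $s_X(B)\subset X$ is closed: since $p_X$ is a fibration and $s_X$ is a section, $s_X$ is an embedding, and under the standing separation hypotheses $s_X(B)$ is a closed subset (one identifies $s_X(B)$ as the equalizer of $s_X\circ p_X$ and $\mathrm{id}_X$). Second, I would invoke the ANR hypothesis to produce a neighborhood structure: because $X$ is an ANR and $s_X(B)$ is a closed subspace, the pair $(X, s_X(B))$ admits a halo/neighborhood deformation retraction data, which is exactly what is needed to build a cofibration. Concretely, being an ANR gives that closed subspaces which are themselves ANRs (here $s_X(B)\cong B$) yield NDR pairs, so $(X,s_X(B))$ is an NDR pair and hence $s_X$ is an ordinary closed cofibration. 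Third — and this is where fibrancy enters — I would upgrade this \emph{ordinary} cofibration to a \emph{fibrewise} cofibration: the retraction data must be chosen to respect the projection $p_X$. Here the Hurewicz fibration property of $p_X$ is used to lift or adjust the deformation so that it stays within fibres, i.e.\ the homotopy $h_t$ realizing the NDR structure can be taken fibrewise ($p_X\circ h_t = p_X$). This produces the fibrewise retraction $r:I_B(X)\to X\times\{0\}\cup s_X(B)\times I$ in $\mathbf{Top}_B$, establishing that $s_X$ is a closed fibrewise cofibration.

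\textbf{Main obstacle.} The routine part is the ANR-to-NDR implication; the delicate step is the third one, making the deformation \emph{fibrewise}. The NDR homotopy furnished by the ANR structure need not a priori commute with $p_X$. The key idea to overcome this is to exploit that $p_X$ is a Hurewicz fibration: given the (non-fibrewise) deformation $h:X\times I\to X$ pushing a neighborhood of $s_X(B)$ into $s_X(B)$, one composes with the fibration's path-lifting to correct the track $p_X\circ h$ back to the constant projection, thereby producing a genuinely fibrewise homotopy over $B$. I expect this correction argument, together with checking that the associated halo function can be taken $p_X$-invariant, to be the technical heart of the proof; the remaining verifications (closedness of $s_X(B)$, formal NDR bookkeeping) are standard.
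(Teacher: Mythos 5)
Your overall strategy differs from the paper's. The paper does not work with the pair $(X,s_X(B))$ directly: it cites results from \cite{Calcines-fibrewiseTC} to the effect that a fibrant ANR fibrewise space is \emph{fibrewise locally equiconnected} (the fibrewise diagonal $\Delta_X:X\to X\times_B X$ is a closed fibrewise cofibration), and then deduces that the section is a closed fibrewise cofibration from that, essentially by base-changing the diagonal along the section through the fibration $pr_1:X\times_B X\to X$. Your first two steps are fine and genuinely more elementary: $s_X(B)$ is closed because $X$ is metrizable (being an ANR) and $s_X(B)$ is the equalizer of $s_X\circ p_X$ and $\mathrm{id}_X$; it is a retract of $X$ via $s_X\circ p_X$, hence itself an ANR, so $(X,s_X(B))$ is an NDR pair and $s_X$ is an \emph{ordinary} closed cofibration.

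The gap is exactly where you flag it, in step 3, and it is a genuine missing argument rather than a routine verification. You propose to make the NDR homotopy $h:X\times I\to X$ fibrewise by composing with a path-lifting function for $p_X$ so as to correct the track $p_X\circ h$ back to the constant projection. Concretely this runs into at least two obstructions. First, a Hurewicz fibration need not admit a \emph{regular} lifting function (one carrying constant paths to constant paths) without further hypotheses, so the corrected homotopy need not restrict to the identity on $s_X(B)$ nor equal $\mathrm{id}_X$ at time $0$, both of which are required for the NDR data. Second, even granting a corrected fibrewise homotopy $\tilde h$, the condition $h_1(x)\in s_X(B)$ for $u(x)<1$ is not preserved: $\tilde h_1(x)$ is the endpoint of a lift of a path that merely \emph{starts} at a point of $s_X(B)$, and nothing forces that lift to stay in $s_X(B)$. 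One would also need the halo function $u$ to interact correctly with the corrected deformation, which you have not arranged. In short, the implication ``ordinary closed cofibration over $B$ plus Hurewicz fibration implies closed fibrewise cofibration'' is precisely the nontrivial content you are assuming, not a consequence of a straightforward lifting correction; the detour through fibrewise local equiconnectedness (or the Str{\o}m-type results the paper invokes) exists to circumvent exactly this difficulty. To repair your argument you would need to either prove that implication in the generality you use it, or replace step 3 by the cited fibrewise-LEC machinery.
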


\begin{proof}
Applying \cite[Corollary A.6 (ii)]{Calcines-fibrewiseTC}, we conclude that $X$ is fibrewise locally equiconnected; that is, the diagonal map $\Delta_X: X \rightarrow X \times_B X$ is a closed fibrewise cofibration. The result then follows directly from Corollary A.9 and Remark A.10 in \cite{Calcines-fibrewiseTC}.
\end{proof}

Our second lemma is a well-known result attributed to Strøm (see the final remark in \cite{strom1971homotopy}) so we omit its proof. For a topological space $X$, let $c : X \to X^I$ denote the map that assigns to each $x \in X$ the constant path at $x$, denoted $c_x$. Recall that a \emph{locally equiconnected space} (LEC space) is a topological space $X$ in which the diagonal map $\Delta : X \to X \times X$ is a closed cofibration. CW-complexes and metrizable spaces serve as examples of LEC spaces.

\begin{lemma}\label{lec}
Let $X$ be a topological space. Then,
\begin{enumerate}
    \item $c:X\rightarrow X^I$ is a closed cofibration if and only if there exists a continuous map $\varphi :X^I\rightarrow I$ such that $c(X)=\varphi ^{-1}(\{0\}).$ 
    
    \item If there is a continuous map $\psi :X\times X\rightarrow I$ such that $\psi ^{-1}(\{0\})=\Delta (X)$, then $c:X\rightarrow X^I$ is a closed cofibration. In particular, if $X$ is a LEC space, $c:X\rightarrow X^I$ is guaranteed to be a closed cofibration.
\end{enumerate}  
\end{lemma}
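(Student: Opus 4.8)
The plan is to deduce both statements from Strøm's characterization of closed cofibrations as NDR pairs (see \cite{strom1971homotopy}): for a closed subset $A\subseteq Z$, the inclusion $A\hookrightarrow Z$ is a cofibration if and only if there exist a map $u:Z\to I$ with $A=u^{-1}(\{0\})$ and a homotopy $h:Z\times I\to Z$ satisfying $h_0=\mathrm{id}_Z$, $h(a,t)=a$ for all $a\in A$ and $t\in I$, and $h_1(z)\in A$ whenever $u(z)<1$. Everything reduces to manufacturing the map $u$, because the homotopy part is available for free on $X^I$.

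For part (1), the forward implication is immediate: if $c$ is a closed cofibration then $(X^I,c(X))$ is an NDR pair, and one takes $\varphi$ to be the map $u$ furnished by the NDR structure, which satisfies $c(X)=u^{-1}(\{0\})$. The reverse implication is the crux, and the key observation is that $c(X)$ is always a \emph{strong} deformation retract of $X^I$, with no hypotheses on $X$: the homotopy $G:X^I\times I\to X^I$ defined by $G(\alpha,s)(t)=\alpha((1-s)t)$ satisfies $G_0=\mathrm{id}$, $G_1(\alpha)=c_{\alpha(0)}\in c(X)$ for every path $\alpha$, and $G(c_x,s)=c_x$ for every constant path. (Its continuity follows from the exponential adjunction together with continuity of the evaluation $X^I\times I\to X$, valid since $I$ is locally compact Hausdorff.) Because $G_1$ lands in $c(X)$ for \emph{all} paths, the pair $(\varphi,G)$ is an NDR structure on $(X^I,c(X))$; here $c(X)=\varphi^{-1}(\{0\})$ is closed and $c$ is a closed embedding, being a section of the evaluation $\mathrm{ev}_0:X^I\to X$. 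Strøm's theorem then yields that $c$ is a closed cofibration, so the existence of $\varphi$ is the only genuine obstruction.

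For part (2), by part (1) it suffices to build $\varphi:X^I\to I$ with $c(X)=\varphi^{-1}(\{0\})$ out of $\psi$, and I would set $\varphi(\alpha)=\max_{(s,t)\in I\times I}\psi(\alpha(s),\alpha(t))$. The maximum exists as $I\times I$ is compact, and $\varphi(\alpha)=0$ holds exactly when $\psi(\alpha(s),\alpha(t))=0$ for all $s,t$, i.e. when $(\alpha(s),\alpha(t))\in\Delta(X)$ for all $s,t$, i.e. when $\alpha$ is constant; hence $\varphi^{-1}(\{0\})=c(X)$. The main technical point, which I expect to require the most care, is the continuity of $\varphi$: the map $(\alpha,s,t)\mapsto\psi(\alpha(s),\alpha(t))$ is continuous on $X^I\times I\times I$ (again using continuity of evaluation), and maximizing a continuous function over the compact parameter space $I\times I$ produces a continuous function of $\alpha$, by a standard tube-lemma argument. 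Finally, the \emph{in particular} clause is immediate: a LEC space $X$ has $\Delta:X\to X\times X$ a closed cofibration, so $(X\times X,\Delta(X))$ is an NDR pair and the accompanying map $u:X\times X\to I$ serves as the required $\psi$.
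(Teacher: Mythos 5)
Your proof is correct. The paper itself gives no argument for this lemma, deferring to the concluding remark of Str\o m's paper, so there is nothing to compare against line by line; what you have written is the standard route that remark points to, namely reducing both parts to Str\o m's NDR characterization, observing that $c(X)$ is always a strong deformation retract of $X^I$ via $G(\alpha,s)(t)=\alpha((1-s)t)$ so that only the Urysohn-type function $\varphi$ with $\varphi^{-1}(\{0\})=c(X)$ is at stake, and then manufacturing $\varphi$ from $\psi$ by maximizing over the compact square. All the continuity points you flag (continuity of evaluation since $I$ is locally compact Hausdorff, the easy direction of the exponential law for $G$, and continuity of a maximum over a compact parameter space) do go through exactly as you say, and $c$ is indeed a closed embedding once $c(X)=\varphi^{-1}(\{0\})$, being a section of $\mathrm{ev}_0$.
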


\begin{remark}
As observed by Str{\o}m, P. Tulley  provides in \cite{tulley1965regularity} an example of a space $X$ that does not admit a continuous map $\varphi :X^I\rightarrow I$ satisfying $\varphi ^{-1}(\{0\})=c(X)$. Equivalently, $c: X \to X^I$ is not a closed cofibration.
\end{remark}

\begin{lemma}\label{previo2}
Let $Y$ be a fibrant fibrewise space over a LEC space $B$. If $Y$ is an ANR space, then $P_B(Y)$ is also an ANR space. Consequently, if $Y$ is additionally a fibrewise pointed space, $P_B(Y)$ must be fibrewise well-pointed.
\end{lemma}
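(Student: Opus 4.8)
The plan is to realize $P_B(Y)$ as a retract of the ANR $Y^I$ and then invoke that a retract of an ANR is again an ANR. By \Cref{prim}, the fibrewise cocylinder $P_B(Y)$ is homeomorphic to the subspace $\{\alpha\in Y^I : p_Y\circ\alpha \text{ is constant}\}\subseteq Y^I$. Since $Y$ is an ANR (hence metrizable), the free path space $Y^I=\mathrm{Map}(I,Y)$ is metrizable for the compact-open topology and is again an ANR, by the classical theorem on mapping spaces of ANRs (Hu, \emph{Theory of Retracts}). Thus it suffices to produce a retraction $r:Y^I\to P_B(Y)$.

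First I would fix a lifting function $\lambda:Y\times_B B^I\to Y^I$ for the Hurewicz fibration $p_Y$ (which exists because $Y$ is fibrant), so that $\lambda(y,\omega)$ is a path starting at $y$ and covering $\omega$. Writing $\beta:=p_Y\circ\alpha$ and letting $\beta_t$ be the path in $B$ running from $\beta(t)$ back to $\beta(0)$, I would set
\[
r(\alpha)(t):=\lambda\bigl(\alpha(t),\beta_t\bigr)(1),\qquad \beta_t(s):=\beta\bigl(t(1-s)\bigr).
\]
This is continuous in $(\alpha,t)$ (the assignments $\alpha\mapsto\beta$, $(\alpha,t)\mapsto(\alpha(t),\beta_t)\in Y\times_B B^I$ are continuous, after which one applies $\lambda$ and evaluates), and since $p_Y\bigl(r(\alpha)(t)\bigr)=\beta_t(1)=\beta(0)$ for every $t$, the path $r(\alpha)$ lies in a single fibre; that is, $r(\alpha)\in P_B(Y)$.

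The main obstacle is that $r$ is a genuine retraction, i.e. the identity on $P_B(Y)$, only when $\lambda$ is \emph{regular}, meaning $\lambda(y,c_{p_Y(y)})=c_y$. Indeed, for $\alpha\in P_B(Y)$ the path $\beta$ is constant, each $\beta_t$ is a constant path, and regularity is exactly what forces $r(\alpha)(t)=\alpha(t)$. This is where the hypothesis that $B$ is LEC is essential: by part (2) of \Cref{lec} the constant-path inclusion $c:B\to B^I$ is a closed cofibration, and by the classical regularization result a Hurewicz fibration over a base with cofibrant constant-path inclusion admits a regular lifting function. Choosing such a $\lambda$, the map $r$ restricts to the identity on $P_B(Y)$, so $P_B(Y)$ is a retract of the ANR $Y^I$ and is therefore an ANR.

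For the concluding assertion, observe that $p_Y^I:Y^I\to B^I$ is a Hurewicz fibration (the functor $(-)^I$ preserves Hurewicz fibrations), so $P_B(Y)=B\times_{B^I}Y^I$, being the pullback of $p_Y^I$ along $c:B\to B^I$, projects onto $B$ via a Hurewicz fibration; hence $P_B(Y)$ is fibrant. When $Y$ is in addition fibrewise pointed, $P_B(Y)$ carries the induced section $(id_B,c\circ s_Y):B\to P_B(Y)$ and is thus a fibrewise pointed space. Being simultaneously fibrant and an ANR, it is fibrewise well-pointed by \Cref{previo}, completing the argument.
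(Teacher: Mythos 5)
Your argument is correct, but it reaches the key ANR conclusion by a genuinely different route from the paper. The paper constructs no retraction at all: it notes that $c\colon B\hookrightarrow B^I$ is a closed cofibration (Lemma~\ref{lec}, using that $B$ is LEC) and that $p_Y^I\colon Y^I\to B^I$ is a Hurewicz fibration, then invokes Str{\o}m's base-change theorem to conclude that the pullback inclusion $P_B(Y)\hookrightarrow Y^I$ is itself a closed cofibration; since a closed cofibration exhibits $P_B(Y)$ as a (neighborhood) retract of the ANR $Y^I$, a standard theorem from Hu's book yields that $P_B(Y)$ is an ANR. You instead build a global retraction $r\colon Y^I\to P_B(Y)$ by sliding each point $\alpha(t)$ back into the fibre over $\beta(0)$ along a lifting function, and the LEC hypothesis enters through the \emph{regularity} of $\lambda$ (via the zero-set characterization in Lemma~\ref{lec}(1)) rather than through Str{\o}m's pullback theorem. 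Both uses of the hypothesis are legitimate, and your formula for $r$ works: $r(\alpha)$ lands in a single fibre because $\beta_t(1)=\beta(0)$, and regularity is exactly what makes $r$ restrict to the identity on $P_B(Y)$ (identified with the constant-projection paths as in Remark~\ref{prim}). Your route is more explicit and proves slightly more (a global retract rather than a neighborhood retract), at the cost of invoking the classical regularization theorem for Hurewicz fibrations; the paper's route is shorter and stays entirely within cofibration technology, which it then reuses verbatim in Lemma~\ref{lemma-comparison1}. The concluding step --- $P_B(Y)$ is fibrant, so fibrant plus ANR gives fibrewise well-pointed via Lemma~\ref{previo} --- is the same in both arguments.
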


\begin{proof} Recall the pullback diagram that defines $P_B(Y)$.
By Lemma \ref{lec} above, the map $c: B \hookrightarrow B^I$, which assigns to each $b \in B$ the constant path $c_b$, is an ordinary closed cofibration. Since $p^I_Y$ is a Hurewicz fibration, it follows from \cite[Theorem 12]{strom1968note} that $P_B(Y) \hookrightarrow Y^I$ is also a closed cofibration. Given that $Y^I$ is an ANR space, we conclude from \cite[Chapter IV, Theoren 3.2]{hu1965theory} that $P_B(Y)$ is also an ANR space. The final statement follows from the fact that $P_B(Y)$ is fibrant, allowing us to apply Lemma \ref{previo} above.
 \end{proof}

Now, we are in a position to state and prove our result:

\begin{theorem}\label{thm:pointed-unpointed-ineq}
Let $f,g:X\rightarrow Y$ be fibrewise pointed maps where $X$ and $Y$ are fibrant. If, in addition, $X$ and $Y$ are ANR spaces and $B$ is a LEC space, then
$$D_B(f,g)\leq D^B_B(f,g)\leq D_B(f,g)+1.$$
\end{theorem}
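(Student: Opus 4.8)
The inequality $D_B(f,g)\le D_B^B(f,g)$ is immediate, since every fibrewise pointed homotopy is in particular a fibrewise homotopy and every open cover witnessing $D_B^B(f,g)$ is a legitimate cover witnessing $D_B(f,g)$; so the whole content is the upper bound $D_B^B(f,g)\le D_B(f,g)+1$. My plan is to transfer this to a comparison of two sectional categories of the \emph{single} fibrewise fibration $\widetilde{\Pi}_Y\colon \mathcal{P}_B(f,g)\to X$. By \Cref{prop:phd-equals-secatB} and \Cref{prop:DBB-is-secatB} we have $D_B(f,g)=\sct_B(\widetilde{\Pi}_Y)$ and $D_B^B(f,g)=\sct_B^B(\widetilde{\Pi}_Y)$, so it suffices to prove $\sct_B^B(\widetilde{\Pi}_Y)\le \sct_B(\widetilde{\Pi}_Y)+1$. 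First I would assemble the hypotheses needed to argue pointedly: by \Cref{previo} the fibrant ANR space $X$ is fibrewise well-pointed, by \Cref{previo2} so is $P_B(Y)$, and since $\mathcal{P}_B(f,g)$ is the pullback of the fibrewise pointed fibration $\Pi_Y$ along $(f,g)$, \Cref{important2} shows that $\mathcal{P}_B(f,g)$ is fibrewise well-pointed as well. Hence $\widetilde{\Pi}_Y$ is a fibrewise fibration between fibrewise well-pointed spaces, and by \Cref{important1}(i),(iii) the fibrewise and fibrewise pointed notions of fibration and of homotopy equivalence coincide in this range; in particular $\widetilde{\Pi}_Y$ is a fibrewise pointed fibration and the section $s_X\colon B\to X$ is a closed fibrewise cofibration.

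With these reductions in place I would argue on open covers. Set $n=\sct_B(\widetilde{\Pi}_Y)$ and fix a fibrewise sectional cover $\{U_0,\dots,U_n\}$ of $X$ with sections $\sigma_i\colon U_i\to \mathcal{P}_B(f,g)$; because $\widetilde{\Pi}_Y$ is a fibrewise fibration these may be taken to be strict. The extra open set is a halo of the section: since $s_X$ is a closed fibrewise cofibration, the associated fibrewise Str\o m (NDR) structure provides a map $u\colon X\to I$ with $u^{-1}(0)=s_X(B)$ and a fibrewise deformation $D\colon X\times I\to X$ rel $s_X(B)$ with $D_0=\mathrm{id}$ and $D_1(\{u<1\})\subseteq s_X(B)$. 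Put $V=\{u<1\}$. Being fibrewise, $D_1$ preserves fibres, so $D_1|_V=s_X\circ p_X|_V$ and therefore $\mathrm{inc}_V\simeq_B^B s_X\circ p_X|_V$; as $f,g$ are fibrewise pointed this yields $f|_V=f\circ\mathrm{inc}_V\simeq_B^B s_Y\circ p_X|_V\simeq_B^B g|_V$, i.e. $\widetilde{\Pi}_Y$ admits a fibrewise pointed homotopy section over $V$. This $V$ will be the $(n+2)$-nd member of the pointed cover.

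The remaining task, and the main obstacle, is to replace each $U_i$ by an open $W_i\supseteq s_X(B)$ carrying a fibrewise pointed homotopy section. The difficulty is genuine and is exactly what the single extra set is spent on: the fibre of $\widetilde{\Pi}_Y$ over a section point $s_X(b)$ is the space of loops at $s_Y(b)$ in the fibre $Y_b$, so the restriction of $\sigma_i$ to the section is a possibly essential loop that cannot, in general, be deformed through sections to the basepoint section $s_{\mathcal P}$; for the same reason one cannot simply glue $\sigma_i$ on $U_i$ to the pointed section on $V$. The plan is to use the deformation $D$ to rectify $\sigma_i$ in the collar near $s_X(B)$, setting $W_i=U_i\cup V$, keeping $\sigma_i$ where $u=1$, and carrying it, via the fibrewise homotopy lifting property of $\widetilde{\Pi}_Y$ applied to $D$, into the pointed regime governed by $V$ as $u\to 0$. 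Verifying that these two descriptions patch across the collar into one fibrewise \emph{pointed} homotopy section — equivalently, that the unpointed datum $\sigma_i$ and the pointed datum over $V$ mix coherently up to $\simeq_B^B$ — is the technical heart, and I would carry it out exactly as in the classical, non-fibrewise case, using the fibrewise HLP together with the mixing lemma \cite[Lemma 4.3]{oprea2011mixing}.

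Granting this construction, the family $\{W_0,\dots,W_n,V\}$ is an open cover of $X$ consisting of $n+2$ sets, each containing $s_X(B)$ and each admitting a fibrewise pointed homotopy section of $\widetilde{\Pi}_Y$. Hence $\sct_B^B(\widetilde{\Pi}_Y)\le n+1$, that is $D_B^B(f,g)\le D_B(f,g)+1$, which together with the trivial inequality completes the proof.
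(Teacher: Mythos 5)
Your reduction of the theorem to the inequality $\sct_B^B(\widetilde{\Pi}_Y)\leq \sct_B(\widetilde{\Pi}_Y)+1$, and the bookkeeping showing that $X$, $P_B(Y)$, $Y\times_B Y$ and hence $\mathcal{P}_B(f,g)$ are fibrewise well-pointed (via \Cref{previo}, \Cref{previo2} and \Cref{important2}), coincide exactly with the paper's argument. The difference is that at this point the paper simply invokes \cite[Theorem 4.1]{GC}, which is precisely the statement that a fibrewise pointed fibration between fibrewise well-pointed spaces satisfies $\sct_B\leq\sct_B^B\leq\sct_B+1$; you instead set out to reprove that comparison by an open-cover argument, and that is where your proposal has a genuine gap.

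Concretely: your halo $V$ with its pointed homotopy section is fine, but the construction of the sets $W_i=U_i\cup V$ is not carried out, and as sketched it cannot be. You correctly identify the obstruction — over a section point $s_X(b)$ the fibre of $\widetilde{\Pi}_Y$ is the loop space $\Omega(Y_b,s_Y(b))$, and the rectified section $\tilde H_{i,1}$ obtained by lifting the deformation $D$ lands in that fibre as a family of possibly essential loops rather than at the constant loop — but the two devices you invoke do not remove it. Lifting $D$ through the HLP only transports $\sigma_i$ into the fibres over $s_X(B)$; it does not deform it, through sections, to the canonical section of $\mathcal{P}_B(f,g)$, so the resulting map neither extends over $V\setminus U_i$ nor restricts to $s_{\mathcal{P}_B(f,g)}$ on $s_X(B)$, both of which are required for a fibrewise \emph{pointed} homotopy section on a set containing $s_X(B)$. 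And \cite[Lemma 4.3]{oprea2011mixing} is the wrong tool here: it merely recombines two open covers of the same space into one of additive length (it is what underlies \Cref{prop: triangle-inequality}), and it says nothing about converting unpointed local sections into pointed ones. The ``technical heart'' you defer to ``the classical, non-fibrewise case'' is exactly the content of the cited \cite[Theorem 4.1]{GC}; without either citing it or supplying an actual patching argument (e.g.\ via the pointed versus unpointed fibrewise join/Ganea characterizations of $\sct_B$ and $\sct_B^B$), the upper bound $D_B^B(f,g)\leq D_B(f,g)+1$ is not established.
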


\begin{proof}
%Consider the following pullback of fibrewise pointed maps and spaces
%$$\xymatrix{
%{\mathcal{P}_B(f,g)} \ar[d]_{\Pi _Y^*} \ar[r] & {P_B(Y)} \ar[d]^{\Pi _Y} \\
%{X} \ar[r]_{(f ,g )} & {Y\times _BY.} }$$
By Lemmas \ref{previo} and \ref{previo2}, both $X$ and $P_B(Y)$ are fibrewise well-pointed spaces. Additionally, by Proposition \ref{important2}, $Y\times _BY$ is also a fibrewise well-pointed space. Therefore, by Proposition \ref{important2}, $\mathcal{P}_B(f,g)$ is likewise fibrewise well-pointed. Considering $D_B(f,g)=\mbox{secat}_B(\widetilde{\Pi} _Y)$ and $D_B^B(f,g)=\mbox{secat}_B^B(\widetilde{\Pi} _Y)$, the result follows from \cite[Theorem 4.1]{GC}.
\end{proof}

\begin{remark}
Note that \Cref{thm:pointed-unpointed-ineq} also holds under the same hypotheses for $Y$ and  $B$ but requiring that $X$ be metrizable and fibrewise well-pointed, rather than necessarily fibrant or an ANR space.
\end{remark}
As a consequence to \Cref{thm:pointed-unpointed-ineq} we recover \cite[Corollary 25]{Calcines-fibrewiseTC}.

To conclude, we will provide sufficient conditions for the parametrized homotopic distance to coincide with its pointed version. Recall that a fibrewise pointed space $X$ is said to be \emph{cofibrant} if the section $s_X:B\rightarrow X$ is a closed cofibration.

\begin{lemma}\label{lemma-comparison1}
Let $Y$ be a fibrewise pointed space over a LEC space $B$. If $Y$ is both cofibrant and fibrant, then $P_B(Y)$ is also cofibrant (and fibrant).
\end{lemma}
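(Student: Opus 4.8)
The plan is to treat the two assertions separately. Fibrancy is immediate: by the pullback \eqref{eq:PBX}, the projection $P_B(Y)\to B$ is the base change along $c\colon B\to B^I$ of $p_Y^I\colon Y^I\to B^I$, which is a Hurewicz fibration because $p_Y$ is; hence $P_B(Y)\to B$ is a Hurewicz fibration and $P_B(Y)$ is fibrant. For cofibrancy I must prove that the section $\sigma=(id_B,c\circ s_Y)\colon B\to P_B(Y)$, $b\mapsto(b,c_{s_Y(b)})$, is a closed cofibration. Following \Cref{prim}, I view $P_B(Y)$ as the subspace $\{\alpha\in Y^I:p_Y\circ\alpha\ \text{constant}\}$ of $Y^I$ through the canonical embedding $\iota$; under $\iota$ the image $\sigma(B)$ is the set of constant paths $\{c_{s_Y(b)}:b\in B\}$, which clearly lies in $P_B(Y)$. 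The strategy is to exhibit $\sigma(B)$ as a closed cofibered subspace of $Y^I$ contained in the closed cofibered subspace $P_B(Y)$, and then to cancel.

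First I would record the two cofiberedness facts inside $Y^I$. That $\iota\colon P_B(Y)\hookrightarrow Y^I$ is a closed cofibration is exactly the computation in the proof of \Cref{previo2}: since $B$ is LEC, $c\colon B\to B^I$ is a closed cofibration by \Cref{lec}, and $p_Y^I$ is a Hurewicz fibration, so $P_B(Y)=(p_Y^I)^{-1}(c(B))\hookrightarrow Y^I$ is a closed cofibration by \cite[Theorem 12]{strom1968note}. That $\iota\circ\sigma\colon B\to Y^I$ is a closed cofibration follows from its factorization $\iota\circ\sigma=s_Y^I\circ c$, where $s_Y^I(c_b)=s_Y\circ c_b=c_{s_Y(b)}$: the map $c$ is a closed cofibration as above, while $s_Y^I\colon B^I\to Y^I$ is a closed cofibration because $s_Y$ is one ($Y$ being cofibrant) and the functor $(-)^I$ preserves closed cofibrations. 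For the latter, if $(u,H)$ is an NDR representation of $(Y,s_Y(B))$, then $\widetilde u(\alpha)=\max_{t\in I}u(\alpha(t))$ and $\widetilde H(\alpha,s)(t)=H(\alpha(t),s)$ define an NDR representation of $(Y^I,(s_Y(B))^I)$, continuity of $\widetilde u$ coming from the compactness of $I$. Since a composite of closed cofibrations is a closed cofibration, $\sigma(B)$ is closed cofibered in $Y^I$.

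Finally, $\sigma(B)\subseteq P_B(Y)\subseteq Y^I$ with $\sigma(B)$ and $P_B(Y)$ both closed cofibered in $Y^I$, so the cancellation property for closed cofibrations — if $A\subseteq Z\subseteq X$ and both $A$ and $Z$ are closed cofibered in $X$, then $A$ is closed cofibered in $Z$ (see \cite{strom1968note}) — yields that $\sigma\colon B\to P_B(Y)$ is a closed cofibration, i.e.\ $P_B(Y)$ is cofibrant. I expect this cancellation to be the main obstacle. The inclusion $\sigma(B)\hookrightarrow P_B(Y)$ is not the restriction of a fibration over a cofibered subspace, so \cite[Theorem 12]{strom1968note} does not apply to it directly; instead one must combine the two Strøm (NDR) structures on $Y^I$ that cut out $\sigma(B)$ and $P_B(Y)$, which in general requires first replacing them by a compatible pair of representations before the deformations can be spliced. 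It is worth noting, for the closedness clause, that $\sigma(B)$ is the image of the closed cofibration $\iota\circ\sigma$ and is therefore closed in $Y^I$, hence in $P_B(Y)$.
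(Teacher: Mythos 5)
Your argument is correct and is essentially the paper's own proof: both factor the section of $P_B(Y)$ through $Y^I$ as $s_Y^I\circ c$, deduce that $P_B(Y)\hookrightarrow Y^I$ is a closed cofibration from \cite[Theorem 12]{strom1968note} applied to the fibration $p_Y^I$ and the closed cofibration $c:B\hookrightarrow B^I$, and then cancel. The cancellation step you flag as the main obstacle is precisely \cite[Lemma 5]{strom1971homotopy}, which the paper cites directly (your closedness remark supplies its hypothesis), and your explicit NDR verification that $(-)^I$ preserves closed cofibrations just replaces the paper's citation of \cite[Lemma 4]{strom1971homotopy}.
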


\begin{proof}
We consider the pullback that defines $P_B(Y)$, along with $s=s_{P_B(Y)}:B\rightarrow P_B(Y)$, the induced section on that pullback:
$$\xymatrix{
{B} \ar@{^{(}->}[rr]^c \ar@/_1pc/[ddr]_{id_B} \ar@{.>}[dr]^{s} & & {B^I} \ar@{^{(}->}[d]^{s^I_Y}\\
 & {P_B(Y)} \ar@{^{(}->}[r]_{\overline{c}} \ar[d] & {Y^I} \ar[d]^{p_Y^I} \\
 & B \ar@{^{(}->}[r]_c & {B^I}.
}$$ 
Since $p^I_Y$ is a Hurewicz fibration and $c: B \hookrightarrow B^I$ is a closed cofibration, by applying \cite[Theorem 12]{strom1968note}, we obtain that the base change $\overline{c}: P_B(Y) \hookrightarrow Y^I$ is also a closed cofibration (and the projection $P_B(Y) \rightarrow B$ is a Hurewicz fibration, confirming that $P_B(Y)$ is fibrant). Furthermore, since $s^I_Y$ is a closed cofibration (see \cite[Lemma 4]{strom1971homotopy}), the composition $\overline{c} \circ s = s^I_Y \circ c$ is a closed cofibration. By \cite[Lemma 5]{strom1971homotopy}, we conclude that $s$ is a closed cofibration.
\end{proof}

The following lemma is well-known; for instance, the concluding remark in \cite{strom1971homotopy} implies a proof, although it is not explicitly stated. 
Let $B$ be a space, and consider the category $\mathbf{cof}^B$ of closed cofibrations. That is, the objects are closed cofibrations $s_X:B\rightarrow X$, and the morphisms are the commutative triangles between closed cofibrations.
%$$\xymatrix{
% & {B} \ar[dl]_{s_X} \ar[dr]^{s_Y} & \\
% {X} \ar[rr] & &  Y.}
% $$ 

\begin{lemma}\label{lemma-comparison2}
$\mathbf{cof}^B$ is closed under pullbacks along
morphisms that are Hurewicz fibrations. 
\end{lemma}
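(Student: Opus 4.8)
The plan is to mimic the proof of \Cref{lemma-comparison1}, reducing the statement to two applications of Strøm's pullback theorem \cite[Theorem 12]{strom1968note} together with the cancellation property of closed cofibrations \cite[Lemma 5]{strom1971homotopy}. Concretely, a pullback along a Hurewicz-fibration morphism in $\mathbf{cof}^B$ is a cospan of closed cofibrations $s_X:B\to X$, $s_Y:B\to Y$, $s_E:B\to E$ joined by morphisms $p:E\to X$ (the Hurewicz fibration, with $p\circ s_E=s_X$) and $g:Y\to X$ (with $g\circ s_Y=s_X$). Forming the pullback $P=E\times_X Y$ with projections $\bar p:P\to Y$ and $\bar g:P\to E$, one obtains the induced section $s_P=(s_E,s_Y):B\to P$, and the task is precisely to prove that $s_P$ is a closed cofibration.

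First I would record that $\bar p:P\to Y$ is a Hurewicz fibration, being the pullback of $p$. Pulling the closed cofibration $s_Y$ back along $\bar p$ and invoking \cite[Theorem 12]{strom1968note}, the inclusion $\iota':\bar p^{-1}(s_Y(B))\hookrightarrow P$ is a closed cofibration; since $s_Y$ is a closed embedding, one has $\bar p^{-1}(s_Y(B))=\{(e,s_Y(b)):p(e)=s_X(b)\}$. Symmetrically, pulling $s_X$ back along $p$ shows that the fibre inclusion $\iota:p^{-1}(s_X(B))\hookrightarrow E$ is a closed cofibration.

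Next I would handle the section into the fibre. Because $p\circ s_E=s_X$, the map $s_E$ corestricts to a map $\sigma':B\to p^{-1}(s_X(B))$ with $\iota\circ\sigma'=s_E$; as both $\iota$ and $s_E$ are closed cofibrations, the cancellation lemma \cite[Lemma 5]{strom1971homotopy} yields that $\sigma'$ is a closed cofibration (this is exactly the cancellation step used for the section $s$ in \Cref{lemma-comparison1}). I would then identify $\bar p^{-1}(s_Y(B))\cong p^{-1}(s_X(B))$ via $(e,s_Y(b))\mapsto e$, which is a homeomorphism because $s_X$ and $s_Y$ are closed embeddings (so that $b=s_X^{-1}(p(e))$ varies continuously with $e$). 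Under this identification, $s_P$ factors as $\iota'\circ\tilde\sigma$, where $\tilde\sigma$ corresponds to $\sigma'$ and is therefore a closed cofibration; hence $s_P$ is a composite of closed cofibrations and so a closed cofibration.

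The only delicate point, and the step I would write out carefully, is the bookkeeping of the last paragraph: verifying that the fibre $\bar p^{-1}(s_Y(B))$ is genuinely homeomorphic to $p^{-1}(s_X(B))$ and that $s_P$ factors through $\iota'$ as $\iota'\circ\tilde\sigma$, so that the two applications of Strøm's theorem line up correctly with the cancellation lemma. The remaining ingredients, namely stability of Hurewicz fibrations under pullback and closure of closed cofibrations under composition, are routine.
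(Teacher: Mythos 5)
Your proof is correct. The paper itself gives no proof of this lemma --- it only remarks that the statement is implicit in the concluding remark of Str{\o}m's 1971 paper --- and your argument supplies exactly the expected details, using the same two ingredients (Str{\o}m's theorem that the preimage of a closed cofibration under a Hurewicz fibration is a closed cofibration, and the cancellation property of closed cofibrations) that the paper already deploys in the proof of \Cref{lemma-comparison1}. The delicate step you flag is handled correctly: since $s_X$ and $s_Y$ are closed embeddings, the projection $\bar{p}^{-1}(s_Y(B))\to p^{-1}(s_X(B))$ is indeed a homeomorphism with continuous inverse $e\mapsto (e,s_Y(s_X^{-1}(p(e))))$, and $s_P=(s_E,s_Y)$ factors through $\bar{p}^{-1}(s_Y(B))$ as the composite of the closed cofibration $\tilde\sigma$ (obtained by cancellation from $s_E=\iota\circ\sigma'$) with the closed cofibration $\iota'$.
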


%begin{proof}
%Suppose $p: E \rightarrow X$ and $f: X' \rightarrow X$ are morphisms in $%\mathbf{cof}^B$, where $p$ is a Hurewicz fibration. Consider the following consecutive pullback diagrams:
%$$
%\xymatrix{
%{F} \ar[d] \ar[r]^i & {E'} \ar[d]_{p'} \ar[r]^{f'} & {E} \ar[d]^p \\
%{B} \ar[r]_{s_{X'}} & {X'} \ar[r]_{f} & {X}.
%}
%$$
%Since $p$ is a Hurewicz fibration, by \cite[Theorem 12]{strom1968note}, both $i$ and $f' \circ i$ are closed cofibrations. By applying \cite[Lemma 5]{strom1971homotopy}, and noting that $(f' \circ i) \circ s_{F} = s_E$ is a closed cofibration, we conclude that $s_F$ is a closed cofibration. Consequently, $s_{E'} = i \circ s_F$ is also a closed cofibration. 
%\end{proof}And now the result:

\begin{theorem}\label{thm:conditions-implying-phd=pphd}
Let $f, g: X \rightarrow Y$ be fibrewise pointed maps between fibrant and cofibrant fibrewise pointed spaces. Additionally, suppose that $B$ is a CW-complex and that the following conditions hold:
\begin{enumerate}
    \item The projection $p_Y: Y \rightarrow B$ is a $k$-equivalence, for some integer $k \geq 1$;
    \item $\mathrm{dim}(B) < (D_B(f,g) + 1) \cdot k - 1$.
\end{enumerate}
Then, it follows that $D_B(f, g) = D_B^B(f, g)$.
\end{theorem}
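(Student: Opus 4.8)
The plan is to translate both quantities into sectional categories of a single fibrewise fibration and then run an obstruction argument on the Schwarz join, using the dimension--connectivity hypothesis (2) to annihilate the one obstruction that separates the pointed and unpointed invariants. First I would invoke \Cref{prop:phd-equals-secatB} and \Cref{prop:DBB-is-secatB} to restate the claim as $\sct_B^B(\widetilde{\Pi}_Y)=\sct_B(\widetilde{\Pi}_Y)$, where $\widetilde{\Pi}_Y:\mathcal P_B(f,g)\to X$ is the fibrewise fibration coming from the pullback \eqref{eq:ptd-pullback-pathfib}. Since $D_B(f,g)\le D_B^B(f,g)$ always holds, only the inequality $\sct_B^B(\widetilde{\Pi}_Y)\le n$ remains, where $n:=D_B(f,g)=\sct_B(\widetilde{\Pi}_Y)$.

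Before that I would verify that $\widetilde{\Pi}_Y$ is a fibrewise pointed fibration between fibrant and cofibrant fibrewise pointed spaces, so that the join machinery applies. Since $Y$ is fibrant and cofibrant, \Cref{lemma-comparison1} makes $P_B(Y)$ fibrant and cofibrant; fibrancy of $Y$ makes $\Pi_Y$ a Hurewicz fibration; and $Y\times_B Y$ is fibrant and cofibrant. Then \Cref{important2} together with \Cref{lemma-comparison2} (closure of closed cofibrations under pullback along Hurewicz fibrations) shows that the pullback $\mathcal P_B(f,g)$ is again fibrant and cofibrant and that $\widetilde{\Pi}_Y$ is a fibrewise pointed fibration, in fact a Hurewicz fibration by fibrancy.

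Next I would pin down the relevant connectivity. Exactly as in the proof of \Cref{thm:lower-upper-bounds}, the hypothesis that $p_Y$ is a $k$-equivalence forces $\Pi_Y$, hence its pullback $\widetilde{\Pi}_Y$, to be a $(k-1)$-equivalence, so the fibrewise homotopy fibre $\Phi_0$ of $\widetilde{\Pi}_Y$—the based loop space of the $(k-1)$-connected fibre of $p_Y$—is $(k-2)$-connected. Because $\sct_B(\widetilde{\Pi}_Y)=n$, Schwarz's characterization provides a fibrewise section of the $(n+1)$-fold fibrewise join of $\widetilde{\Pi}_Y$, whose fibre is $\Phi_0^{*(n+1)}$; the join formula $\mathrm{conn}(A*B)=\mathrm{conn}(A)+\mathrm{conn}(B)+2$ gives that $\Phi_0^{*(n+1)}$ is $\big((n+1)k-2\big)$-connected.

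The heart of the argument, and the step I expect to be most delicate, is deforming this unpointed section into a pointed one over a cover of the same size. Restricting the join section over the basepoint section $s_X(B)\cong B$ yields a section of a bundle with fibre $\Phi_0^{*(n+1)}$ over the CW-complex $B$, while the canonical constant-path section supplies a second one; the difference obstruction to a vertical homotopy between them lies in the groups $H^i(B;\pi_i(\Phi_0^{*(n+1)}))$, which are guaranteed to vanish as soon as $\pi_i(\Phi_0^{*(n+1)})=0$ for $i\le\dim(B)$. By the connectivity computation this is exactly hypothesis (2), namely $\dim(B)\le (n+1)k-2$. Having thus homotoped the two sections over $s_X(B)$, I would use that $s_X$ is a closed cofibration together with the homotopy lifting property of the join fibration—lifting against the cofibration $X\times\{0\}\cup s_X(B)\times I\hookrightarrow X\times I$—to extend the homotopy to a deformation of the global section, producing a section agreeing with the canonical one over $s_X(B)$, i.e.\ a fibrewise pointed section. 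This gives $\sct_B^B(\widetilde{\Pi}_Y)\le n$ and hence the desired equality. The principal obstacle is precisely the bookkeeping here: making the fibrewise Schwarz-join construction function in the pointed category over $B$ and confirming that the connectivity of $\Phi_0^{*(n+1)}$ matches hypothesis (2) on the nose; alternatively one may short-circuit it by combining \Cref{thm:pointed-unpointed-ineq}, which already yields $D_B^B(f,g)\le D_B(f,g)+1$, with the same obstruction count to exclude the value $n+1$.
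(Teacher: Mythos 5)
Your proposal follows essentially the same route as the paper: reduce both invariants to $\sct_B(\widetilde{\Pi}_Y)$ and $\sct_B^B(\widetilde{\Pi}_Y)$, use Lemmas \ref{lemma-comparison1} and \ref{lemma-comparison2} to verify that $\mathcal{P}_B(f,g)$ and $X$ are fibrant and cofibrant, and observe that condition (1) makes $\widetilde{\Pi}_Y$ a $(k-1)$-equivalence so that condition (2) supplies exactly the dimension--connectivity bound needed. The only difference is that where you unfold the Schwarz-join obstruction argument by hand (your connectivity count $(n+1)k-2$ does match hypothesis (2) on the nose), the paper simply cites the general pointed-versus-unpointed comparison theorem \cite[Theorem 4.4]{GC}, which packages precisely that argument.
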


\begin{proof}
%Again, we consider the pullback of fibrewise pointed maps and spaces
%$$\xymatrix{
%{\mathcal{P}_B(f,g)} \ar[d]_{\Pi _Y^*} \ar[r] & {P_B(Y)} \ar[d]^{\Pi _Y} \\
%{X} \ar[r]_{(f ,g )} & {Y\times _BY.} }$$
Since $Y$ is fibrant, the map $\Pi_Y: P_B(Y) \rightarrow Y \times_B Y$ is a Hurewicz fibration, implying that $\widetilde{\Pi}_Y$ is also a Hurewicz fibration. Furthermore, as $X$ is fibrant, $\mathcal{P}_B(f, g)$ is also fibrant.

Next, applying Lemma \ref{lemma-comparison1} and Lemma \ref{lemma-comparison2}, we observe that both $P_B(Y)$ and $Y \times_B Y$ are cofibrant. Consequently, by Lemma \ref{lemma-comparison2} once more, we conclude that $\mathcal{P}_B(f, g)$ is also cofibrant.
Finally, the result follows from applying \cite[Theorem 4.4]{GC}, noting that $D_B(f,g) = \mbox{secat}_B(\widetilde{\Pi}_Y)$ and $D_B^B(f,g) = \mbox{secat}_B^B(\widetilde{\Pi}_Y)$. Moreover, condition (1) in the statement of our theorem ensures that the diagonal map $\Delta_Y: Y \rightarrow Y \times_B Y$ is a $(k-1)$-equivalence, which implies that $\Pi_Y$ is also a $(k-1)$-equivalence. Therefore, $\widetilde{\Pi}_Y$ is a $(k-1)$-equivalence.
\end{proof}

As a consequence to \Cref{thm:conditions-implying-phd=pphd}, we recover \cite[Proposition 28]{Calcines-fibrewiseTC}.

\vspace{.5cm}

\noindent \textbf{Acknowledgement}:
The authors gratefully acknowledge the support of the National Board for Higher Mathematics (NBHM), India through grant 0204/10/(16)/2023/R\&D-II/2789, as well as the support from the Spanish Government under grant PID2023-149804NB-I00.

\bibliographystyle{plain} 
\bibliography{references}

\end{document}